\newtheorem{prop}{Proposition}[section]
\newtheorem{coro}[prop]{Corollary}
\newtheorem{defi}[prop]{Definition}
\newtheorem{defi-lemm}[prop]{Definition-Lemma}
\newtheorem{exam}[prop]{Example}
\newtheorem{lemm}[prop]{Lemma}
\newtheorem{pf-thm}[prop]{proof of theorem}
\newtheorem{rema}[prop]{Remark}
\newtheorem{theo}[prop]{Theorem}
\newtheorem*{ack}{Acknowledgments}
\newcommand{\stkout}[1]{\ifmmode\text{\sout{\ensuremath{#1}}}\else\sout{#1}\fi}
\def\cD{\mathcal{D}}
\def\cJ{\mathcal{J}}
\def\cM{\mathcal{M}}
\def\cW{\mathcal{W}}
\def\cX{\mathcal{X}}
\def\sO{{\mathscr O}}
\def\sL{{\mathscr L}}
\def\sO{\mathscr{O}}
\def\sE{\mathscr{E}}
\def\CC{\mathbb{C}}
\def\GG{\mathbb{G}}
\def\HH{\mathbb{H}}
\def\JJ{\mathbb{J}}
\def\LL{\mathbb{L}}
\def\MM{\mathbb{M}}
\def\NN{\mathbb{N}}
\def\PP{\mathbb{P}}
\def\RR{\mathbb{R}}
\def\bD{\mathbf{D}}
\def\bJ{\mathbf{J}}
\def\bL{\mathbf{L}}
\def\bM{\mathbf{M}}
\def\faut{\mathfrak{aut}}
\def\fdiff{\mathfrak{diff}}
\def\fg{\mathfrak{g}}
\def\fgl{\mathfrak{gl}}
\def\fham{\mathfrak{ham}}
\def\fk{\mathfrak{k}}
\def\fp{\mathfrak{p}}
\def\ft{\mathfrak{t}}
\def\fu{\mathfrak{u}}
\def\fz{\mathfrak{z}}
\def\Ad{\mathrm{Ad}}
\def\ad{\mathrm{ad}}
\def\faut{\mathfrak{aut}}
\def\chow{\mathrm{Chow}}
\def\DF{\mathrm{Fut}}
\def\Diff{\mathrm{Diff}}
\def\Ding{\mathrm{Ding}}
\def\Div{\mathrm{div}}
\def\Dom{\mathrm{Dom}}
\def\End{\mathrm{End}}
\def\ev{\mathrm{ev}}
\def\GL{\mathrm{GL}}
\def\Ham{\mathrm{Ham}}
\def\ham{\mathrm{ham}}
\def\im{\mathrm{Im}}
\def\Int{\mathrm{int}}
\def\intg{\mathrm{int}}
\def\Lie{\mathrm{Lie}}
\def\map{\mathrm{Map}}
\def\dd{\mathrm{partial}}
\def\PSH{\mathrm{PSH}}
\def\Re{\mathrm{Re}}
\def\Ric{\mathrm{Ric}}
\def\SL{\mathrm{SL}}
\def\Solv{\mathrm{Solv}}
\def\Sp{\mathrm{Sp}}
\def\Sym{\mathrm{Sym}}
\def\SU{\mathrm{SU}}
\def\tr{\mathrm{Tr}}
\def\UU{\mathrm{U}}
\def\vol{\mathrm{vol}}
\def\ii{\sqrt{-1}}
\def\la{\langle}
\def\lla{\langle\!\langle}
\def\llp{(\!(}
\def\rrp{)\!)}
\def\rra{\rangle\!\rangle}
\def\ra{\rangle}
\def\ti{\widetilde}
\def\dbar{\bar \partial}
\def\dd{\partial}
\def\ii{\sqrt{-1}}
\newcommand{\red}{\color{black}}
\newcommand{\blue}{\color{black}}
\newcommand{\green}{\color{black}}
\def\al{\alpha}
\def\be{\beta}
\def\ep{\epsilon}
\def\de{\delta}
\def\De{\Delta}
\def\ga{\gamma}
\def\Ga{\Gamma}
\def\lam{\lambda}
\def\Lam{\Lambda}
\def\Om{\Omega}
\def\om{\omega}
\def\vphi{\varphi}
\def\si{\sigma}
\def\th{\theta}
\newcommand{\tri}{\triangle}
\newcommand{\ddbar}{\sqrt{-1}\partial\bar\partial}
\begin{document}

\title{Moment map, convex function and  extremal point}

\author{King-Leung Lee}
\address{Departamento de Matem\'{a}ticas\\
Facultad de Ciencias, Universidad Aut\'{o}noma de Madrid\\
28049 Madrid\\ Spain}
\email{king.lee@uam.es}


\author{Jacob Sturm}
\address{Department of Mathematics and Computer Science\\
           Rutgers University, Newark NJ 07102-1222\\ USA}
\email{sturm@rutgers.edu}

\author{Xiaowei Wang}
\address{Department of Mathematics and Computer Science\\
           Rutgers University, Newark NJ 07102-1222\\ USA}
\email{xiaowwan@rutgers.edu}

\date{\today}
\maketitle
\begin{abstract}
The moment map $\mu$ is a central concept in the study of Hamiltonian actions of compact Lie groups $K$ on symplectic manifolds. In this short note, we propose a theory of moment maps coupled with an $\Ad_K$-invariant convex function $f$ on $\fk^\ast$, the dual of Lie algebra of $K$, and study the properties of the critical point of $f\circ\mu$. Our motivation comes from Donaldson \cite{Donaldson2017} which is an example of infinite dimensional version of our setting. As an application, we interpret  K\"ahler-Ricci solitons as a special case of the generalized extremal metric.
\end{abstract}

\tableofcontents

\section{Introduction}
Let $(Z,\om,I)$ be a K\"ahler manifold equipped with K\"ahler form $\om$ and  complex structure $I$.
Suppose $(Z,\om)$
admits a Hamiltonian action of a {\red compact } Lie group $K$, that is, suppose there exists a {\red moment map}
\begin{equation*}
  \mu:Z\longrightarrow \fk^\ast=\mathrm{Lie}(K)^\ast \text{ satisfying }\
\left\{
\begin{array}{ccclc}
\la d\mu,\eta\ra_\Lie&=&\om(\cdot\, ,\si_z(\eta)) &{\rm for\ all}\  \eta\in \fk\\
\mu(k\cdot z)&=&\Ad^\ast_k\mu(z). &{\rm for\ all}\   k\in K
\end{array}
\right.
\end{equation*}
where  $\la\cdot\,,\,\cdot\ra_\Lie$ is the canonical pairing and $\si_z:\fk\to T_zZ$ denotes the infinitesimal action of $K$. We assume further that the $K$-action on $Z$ can be {\red complexified} to a $G$-action with $K\subseteq G$ and $\fg:=\Lie(G)=\fk\otimes_\RR\CC$, i.e. $G$ is the reductive group complexifying $K$. 
Let  $f$ be a smooth, strictly convex $\Ad^\ast_K$-invariant  $\mathbb{R}$ valued function on $\fk^\ast$.
Our first main result is the following generalization of Matsushima-Calabi decomposition (which is the case $f=|\mu|^2$):

\begin{theo}[ Theorem \ref{Calabi}]
 Suppose that $z$ is a critical point (necessarily a strict minimum) of $f\circ\mu$, i.e. suppose $df|_{\mu(z)}\in \fk_z$ where
$\fk_z$ is the stabilizer of $z$. Then we have decomposition:


$$
\fg_z=\bigoplus_{\lam \leq 0}\fg_\lam \text{ where } \fg_\lam:=\left\{\xi\in \fg\left|{}\, \ad_{\sqrt{-1}df|_{\mu(z)}}\xi\,=\, \lam \xi\right.\right\}.
$$
In particular, 
$\fg_0\subset \fg_z$ is the {\red reductive} part of $\fg_z$,
and
$df|_{\mu(z)}\in \fz\big(\fg_0\big)$. 
\end{theo}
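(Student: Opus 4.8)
The plan is to organize the proof around the semisimple element $H:=\ii\,df|_{\mu(z)}$, writing $\xi_0:=df|_{\mu(z)}\in\fk_z$, so that $H\in\ii\fk$. First I record the purely Lie-algebraic structure: since $H$ lies in the noncompact part of $\fg$, the operator $\ad_H$ is self-adjoint for the inner product $-B(\,\cdot\,,\theta\,\cdot\,)$ attached to the compact real form (here $\theta$ is conjugation with respect to $\fk$), hence has real eigenvalues and gives the grading $\fg=\bigoplus_\lambda\fg_\lambda$ with $[\fg_\lambda,\fg_{\lambda'}]\subseteq\fg_{\lambda+\lambda'}$ and $\theta(\fg_\lambda)=\fg_{-\lambda}$. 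The ``in particular'' assertions are then automatic once the main identity is proved: $\fg_0=Z_\fg(\xi_0)$ is the centralizer of a semisimple element, hence reductive and equal to the Levi factor of the parabolic $\bigoplus_{\lambda\le0}\fg_\lambda$, while $\xi_0\in\fg_0$ commutes with all of $\fg_0$, so $\xi_0\in\fz(\fg_0)$. Because $\xi_0\in\fk_z\subseteq\fg_z$ and $\fg_z$ is a complex subalgebra, we have $H\in\fg_z$, so $\fg_z$ is $\ad_H$-stable and $\fg_z=\bigoplus_\lambda(\fg_z\cap\fg_\lambda)$. The whole statement thus reduces to two claims: \textbf{(c)} $\fg_\lambda\subseteq\fg_z$ for $\lambda\le0$, and \textbf{(d)} $\fg_\lambda\cap\fg_z=0$ for $\lambda>0$.

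Next I would set up the geometry linking $\si_z$ to the grading. The infinitesimal action $\si_z\colon\fg\to T_zZ$ is complex linear with $\ker\si_z=\fg_z$, and it intertwines $\ad_H$ with the endomorphism $A_z\in\End(T_zZ)$ obtained by linearizing the holomorphic field $\si(H)=I\si(\xi_0)$ at the fixed point $z$; concretely $A_z\,\si_z(\xi)=\si_z(\ad_H\xi)$, so $\si_z(\fg_\lambda)$ lies in the $\lambda$-eigenspace of $A_z$. Since $\si(\xi_0)$ is a Killing field vanishing at $z$, its linearization is skew-Hermitian, whence $A_z=-\mathrm{Hess}_z(\mu^{\xi_0})$ is $g$-self-adjoint and commutes with $I$; in particular all eigenvalues are real and eigenspaces are $I$-stable. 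I also record the invariance input coming from minimality: because $z$ is fixed by $K_z$, the point $\mu(z)$ is $\Ad^*_{K_z}$-fixed, and since $f$ is $\Ad^*_K$-invariant its differential at a fixed point is $\Ad_{K_z}$-fixed, giving $[\xi_0,\fk_z]=0$, i.e.\ $(\fk_z)_{\CC}\subseteq\fg_0$.

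Assuming (c), claim (d) follows by a conjugation argument. If $0\neq\xi\in\fg_\lambda\cap\fg_z$ with $\lambda>0$, write $\xi=\eta_1+\ii\eta_2$ with $\eta_i\in\fk$; then $\si_z\xi=\si_z\eta_1+I\si_z\eta_2=0$ forces $\si_z\eta_1=-I\si_z\eta_2$, hence $\si_z(\theta\xi)=\si_z\eta_1-I\si_z\eta_2=-2I\si_z\eta_2$. But $\theta\xi\in\fg_{-\lambda}$ with $-\lambda<0$, so (c) gives $\si_z(\theta\xi)=0$, forcing $\si_z\eta_2=\si_z\eta_1=0$, i.e.\ $\eta_1,\eta_2\in\fk_z$ and $\xi\in(\fk_z)_{\CC}\subseteq\fg_0$; this contradicts $\xi\in\fg_\lambda$ with $\lambda\neq0$ unless $\xi=0$.

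The main obstacle is therefore (c), and this is where minimality and convexity must be used in an essential way; note that the second-variation inequality $\mathrm{Hess}_zF=\mathrm{Hess}(f)(d\mu\,\cdot,d\mu\,\cdot)-g(A_z\,\cdot,\cdot)\ge0$ alone does \emph{not} suffice, since for $\lambda<0$ the term $-\lambda|\si_z\xi|^2$ is already positive. The plan is instead to prove the sharper ``extremality'' statement that $z$ is a local maximum of $\mu^{\xi_0}$ along the orbit $G\cdot z$, equivalently that all $A_z$-eigenvalues on $\si_z(\fg)$ are nonnegative, which immediately yields $\si_z(\fg_\lambda)=0$ for $\lambda<0$. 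The key technical ingredient is a sign-matching: by $\Ad^*_K$-invariance and strict convexity of $f$, the gradient $\xi_0=df|_{\mu(z)}$ and the vector $\mu(z)^{\sharp}$ lie in a common closed Weyl chamber (the gradient of a $W$-invariant convex function preserves chambers), so for each root the $\ad_{\xi_0}$-weight $\lambda$ and the $\ad_{\mu(z)^{\sharp}}$-weight have the same sign; combined with minimality of $F$, which forces $\mu(z)$ to be extremal along the active orbit directions, and with the explicit formula $\mathrm{Hess}_z(\mu^{\xi_0})(\si\eta_1,\si\eta_2)=\pm\langle\mu(z),[\eta_1,\eta_2]\rangle$, one obtains the required positivity $\lambda\ge0$ on $\si_z(\fg)$. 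The residual case $\lambda=0$, giving $\fg_0\subseteq\fg_z$, I expect to be the most delicate point, reducing to a Matsushima-type argument applied to the reductive group $G_0=Z_G(\xi_0)$ acting on the fixed locus $Z^{\xi_0}$, where $\xi_0$ acts trivially and $z$ is a balanced critical point; this reductive step is the crux of the whole decomposition.
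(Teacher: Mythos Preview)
You have misread the statement. Compare the version in the introduction with the actual Theorem~\ref{Calabi} in the body: there $\fg_\lambda$ is defined as $\{\xi\in\fg_z:\ii\,\ad_{df|_{\mu(z)}}\xi=\lambda\xi\}$, i.e.\ the eigenspaces are taken \emph{inside $\fg_z$}, not inside all of $\fg$. With that reading the content is: (i) $\fg_z$ is $\ad_H$-stable (automatic since $H:=\ii\,df|_{\mu(z)}\in\fg_z$), (ii) $\ad_H|_{\fg_z}$ is diagonalizable with real eigenvalues, and (iii) every eigenvalue is $\leq 0$, with the $0$-eigenspace equal to $(\fk_z)_\CC$. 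Your claim~(c), that every full eigenspace $\fg_\lambda\subset\fg$ with $\lambda\leq 0$ lies in $\fg_z$, is simply false in general: take any point $z$ with small stabilizer (e.g.\ a $\mu$-balanced point with $\fg_z=0$); then $\fg_0$ still contains a Cartan of $\fg$. So the ``crux'' you identify is not a crux but an overreach, and the Weyl-chamber/sign-matching sketch cannot rescue it.

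The paper's argument is short and quite different from yours. It does \emph{not} use the Killing form; it uses the inner product $\llp\cdot,\cdot\rrp_{\mu(z)}$ on $\fk^*$ (transported to $\fk$ via $\nabla^2 f|_{\mu(z)}$) coming from the Hessian of $f$, which is positive definite precisely by strict convexity. Since $df|_{\mu(z)}\in\fk_z$, one has $\Ad^*_{\exp(t\,df)}\mu(z)=\mu(z)$, and differentiating the $\Ad_K$-invariance of $\llp\cdot,\cdot\rrp$ gives that $\ad_{df}$ is skew for this inner product; hence $\ii\,\ad_{df}$ is Hermitian on $\fg$ and the eigenvalues are real. The sign then comes from the single identity
\[
\llp\,\ad_{df|_{\mu(z)}}\xi_1,\ \xi_2\,\rrp_{\mu(z)}\ =\ -\,\la\, J\si_z(\xi_1),\ \si_z(\xi_2)\,\ra_{T_zZ},
\]
(this is the paper's (\ref{3}), which follows from the moment-map equation together with $\nabla^2 f|_{\mu}\circ\ad^*_{df|_\mu}=\ad_{df|_\mu}\circ\nabla^2 f|_\mu$). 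For an eigenvector $\xi=\xi_1+\ii\xi_2\in\fg_z$ one has $\si_z(\xi_1)=-J\si_z(\xi_2)$ and $\ad_{df}\xi_1=\lambda\xi_2$, so the identity gives $\lambda\,\llp\xi_2,\xi_2\rrp=-|\si_z(\xi_2)|^2\le 0$, with equality iff $\xi\in(\fk_z)_\CC$. That is the whole proof of the decomposition; your apparatus of $A_z$, second variation of $F$, and the Matsushima-type reduction is neither needed nor (for your claim~(c)) correct.
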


Next we  define (cf. Theorem \ref{Fut}) the ``{\red $\mu$-invariant}" and prove that its non-vanishing is an obstruction to the existence of a critical point. When applied to the infinite dimensional setting of Fano manifolds in \cite{Donaldson2017}, the $\mu$-invariant is the well known Tian-Zhu \cite{TianZhu2002} generalization of  Futaki's invariant to the setting of K\"ahler-Ricci solitons.

To explain in more detail one of the motivations of this work, let $(X,\om_X)$ be a symplectic manifold and 
$$
\cJ_\Int(X,\om_X):=\{ J\in \Ga(\End(TX))\mid J^2=-I, \om_X(J\cdot, J\cdot)=\om_X(\cdot,\cdot),\ \om_X(\cdot,J\cdot)>0 \text{ and } N_{\om_X}(J)=0\}
$$
with $N_\om(J)$ being the Nijenhuis tensor associated to $J$. Then $\cJ_\Int(X,\om)$  is the space of integrable $\om_X$-compatible almost complex structures on $(X,\om_X)$, which is an infinite-dimensional K\"aher manifold
if equipped with the {\red Berdntsson} K\"ahler form $\lla \cdot,\cdot\rra$ defined in \cite[Theorem 1]{Donaldson2017}.\ Note, this is {\red different} from the K\"ahler form in \cite{Donaldson1997}. By applying the above Theorem  to the K\"ahler manifold
$(Z,\om):=(\cJ_\Int,\lla \cdot,\cdot\rra)$,  we obtain a generalization of Calabi's decomposition Theorem \cite[Theorem A]{TianZhu2000} first discovered by Tian and Zhu (revisited in the work of Nakamura \cite{Nakamura2018,Nakamura2019}). Furthermore,
Tian-Zhu's generalized Futaki invariant is a special case of the $\mu$-invariants we introduce in Theorem \ref{Fut}.

\begin{ack}
The work of the last author was partially supported by  a Collaboration Grants for Mathematicians from Simons Foundation:631318 and NSF:DMS-1609335.
\end{ack}

\section{Convex function on $\fk^\ast$}
Let $G$ be a reductive {\red linear } algebraic group,  $K\subseteq G$ be a fixed {\red maximal compact} subgroup, and
 $\frak{k}=\Lie(K)$.
For $\eta\in\fk$ and $\xi^*\in\fk^*$ we  write
$$
\eta(\xi^*):=\ \la\xi^*,\eta\ra\ = \ \xi^*(\eta) \text{ with } \la\cdot,\cdot\ra:\fk^\ast\times \fk\to \RR \text{ be the natural pairing.}
$$
Then if $k\in K$ we have
$$ \la\xi^*,\eta\ra\ = \la\Ad_k^*\xi^*,\Ad_k\eta\ra.
$$
 Let $f:\fk^*\rightarrow\RR$ be a {\red strictly convex} function which
is  $\Ad^\ast_K$-invariant, that is \begin{equation}\label{hess}
0<\la\cdot, \nabla^2f|_{\al^\ast}(\cdot)\ra:\Sym^2 \fk^\ast \longrightarrow \RR
\end{equation}
and
\begin{equation}\label{inv}
 f\circ \Ad_k^*\ = \ f\ \ \text{ for all}\ \ k\in K.
 \end{equation}
 \begin{defi}
Fix $\al^*\in\frak{k}^*$ then
$$
df_{\al^\ast}\in \fk.
$$
We define an isomorphism:
\begin{equation}\label{nabla-f}
\nabla^2 f\big|_{\al^\ast}:
\begin{array}{ccccc}
\fk^\ast & \xrightarrow{\hspace*{1.6cm}}  & \fk \\
\xi^\ast  & \xmapsto{\hspace*{1.0cm}}  & \xi:=\nabla^2 f\big|_{\al^\ast} (\xi^\ast)&
\end{array}
\end{equation}
The convexity of $f$ in \eqref{hess} allows us to define a positive definite {\red inner product} on $\fk^\ast$ for every $\al^\ast\in \fk^\ast$ as follows.
\begin{equation}\label{hess-norm}
\red
\llp\eta^*,\xi^*\rrp_{\al^*}:={\dd\over \dd s}{\dd\over \dd t}\bigg|_{_{s=t=0}}\, f(\al^*+t\xi^*+s\eta^*) =
\nabla^2f\big|_{\al^*}(\xi^*)(\eta^*)\ = \ \la\eta^*, \nabla^2f\big|_{\al^\ast}(\xi^*)\ra_{}\
\end{equation}
\end{defi}


\begin{prop}\label{prop1} For $k\in K$, and $\al^*,\ \be^*,\gamma^*\in \fk^*$ and $\xi,\ \eta\in \fk$  we have
\begin{itemize}
\item
\begin{equation}\label{1}
d f\big|_{\ga^*}(\be^*)\ = df\big|_{\Ad_k^*\ga^*}(\Ad_k^*\be^*)\in \RR\ \ 
\text{ or equivalently }\Ad_k (df|_{\ga^*})=d f|_{\Ad^*_k\ga^\ast}\in \fk.
\end{equation}

\item 
\begin{equation}\label{hess-Ad}
\big(\nabla^2f|_{\red\Ad^\ast_k\al^*} \big)\circ\Ad^\ast_k\be^*=\Ad_k\circ(\nabla^2 f|_{\al^\ast})(\be^\ast).
\end{equation}
\item
\begin{equation}\label{llp}
 \llp\al^*,\be^*\rrp_{\ga^*}\ = \ \llp\Ad_k^*\al^*,\Ad_k^*\be^*\rrp_{\red\Ad_k^*\ga^*}
\end{equation}
\item
\begin{equation}\label{33}
\red
\ad_\eta(df|_{\ga^\ast})  \ = \nabla^2 f\big|_{\ga^\ast}\big(  \ad^\ast_\eta(\ga^\ast)\big)\in\fk \text{ or equivalently }
\big(\nabla^2 f\big|_{\ga^\ast}\big)^{-1}\ad_\eta(df|_{\ga^\ast})  \ =  \ad^\ast_\eta(\ga^\ast)\in\fk^\ast
\end{equation}
 In particular, if we assume further that $\nabla^2 f$ is {\red invertible} then by letting $\ga^*=\mu(z), \eta=df|_{\mu(z)}$,  one obtains
\begin{equation}\label{grad-mu}
\ad^\ast_{df|_{\mu(z)}}\mu(z)=\big(\nabla^2 f\big|_{\red \mu(z)}\big)^{-1}\ad_{df|_{\mu(z)}}(df|_{\mu(z)})=0,\  \forall z\in Z.
\end{equation}

\item Suppose $\ad_\eta\ga^\ast=0$, then we have

\begin{equation}\label{de-0}
\nabla^2f\big|_{\ga^\ast}\circ \ad^*_\eta=\ \ad_\eta\circ \nabla^2f\big|_{\ga^\ast}:\fk^\ast \longrightarrow \fk, 
\text { hence } \nabla^2f\big|_{\mu}\circ \ad^*_{df|_\mu}=\ \ad_{df|_\mu}\circ \nabla^2f\big|_{\mu} \text{ by \eqref{grad-mu}}.
\end{equation}

\end{itemize}
\end{prop}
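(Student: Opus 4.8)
\emph{Proof proposal.} The plan is to read off \eqref{de-0} as the infinitesimal form of the global equivariance identity \eqref{hess-Ad}. I would fix $\eta\in\fk$ and $\ga^\ast\in\fk^\ast$ subject to the standing hypothesis $\ad^\ast_\eta\ga^\ast=0$, and substitute $\al^\ast=\ga^\ast$ and $k=\exp(t\eta)$ into \eqref{hess-Ad}, obtaining for every $t$ an identity of linear maps $\fk^\ast\to\fk$,
\begin{equation*}
\big(\nabla^2 f|_{\Ad^\ast_{\exp(t\eta)}\ga^\ast}\big)\circ\Ad^\ast_{\exp(t\eta)}=\Ad_{\exp(t\eta)}\circ\big(\nabla^2 f|_{\ga^\ast}\big),
\end{equation*}
and then differentiate both sides at $t=0$.

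On the left I would apply the product rule to the two $t$-dependent factors. Differentiating $\Ad^\ast_{\exp(t\eta)}$ produces $\ad^\ast_\eta$, which, composed on the left with the value $\nabla^2 f|_{\ga^\ast}$ at $t=0$, yields the term $\nabla^2 f|_{\ga^\ast}\circ\ad^\ast_\eta$. Differentiating the base point $\Ad^\ast_{\exp(t\eta)}\ga^\ast$ of the Hessian produces, by the chain rule, the directional derivative of the smooth map $\al^\ast\mapsto\nabla^2 f|_{\al^\ast}$ in the direction $\frac{d}{dt}\big|_{t=0}\Ad^\ast_{\exp(t\eta)}\ga^\ast=\ad^\ast_\eta\ga^\ast$; this term vanishes precisely because of the hypothesis $\ad^\ast_\eta\ga^\ast=0$. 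Hence the left-hand derivative equals $\nabla^2 f|_{\ga^\ast}\circ\ad^\ast_\eta$.

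On the right, only $\Ad_{\exp(t\eta)}$ depends on $t$, and its derivative at $t=0$ is $\ad_\eta$, giving $\ad_\eta\circ\nabla^2 f|_{\ga^\ast}$; note that both infinitesimal generators are taken with the same sign convention, so any sign in $\frac{d}{dt}\big|_{t=0}\Ad^\ast_{\exp(t\eta)}$ is matched on the right and cancels. Equating the two derivatives gives exactly \eqref{de-0}. For the ``hence'' clause I would invoke \eqref{grad-mu}, which states $\ad^\ast_{df|_{\mu(z)}}\mu(z)=0$: this is the hypothesis $\ad^\ast_\eta\ga^\ast=0$ with $\ga^\ast=\mu(z)$ and $\eta=df|_{\mu(z)}$, so the general identity specializes at once to the stated commutation relation for $\mu$. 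I expect the only delicate point to be the vanishing of the base-point variation term, which relies both on the smooth dependence of $\nabla^2 f$ on its evaluation point and on the exact cancellation furnished by $\ad^\ast_\eta\ga^\ast=0$; I would therefore take care to identify $\frac{d}{dt}\big|_{t=0}\Ad^\ast_{\exp(t\eta)}\ga^\ast$ correctly with the coadjoint derivative before concluding.
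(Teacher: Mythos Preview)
Your argument for \eqref{de-0} is correct. Differentiating the operator identity \eqref{hess-Ad} at $k=\exp(t\eta)$, $\al^\ast=\ga^\ast$, and using $\ad^\ast_\eta\ga^\ast=0$ to kill the base-point variation of the Hessian, yields precisely $\nabla^2 f|_{\ga^\ast}\circ\ad^\ast_\eta=\ad_\eta\circ\nabla^2 f|_{\ga^\ast}$; the specialization to $\ga^\ast=\mu(z)$, $\eta=df|_{\mu(z)}$ via \eqref{grad-mu} is also fine.

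The paper takes a closely related but not identical route: it differentiates the inner-product invariance \eqref{llp} rather than the operator equivariance \eqref{hess-Ad}. Concretely, from $\ad^\ast_\eta\ga^\ast=0$ one has $\Ad^\ast_{\exp(t\eta)}\ga^\ast=\ga^\ast$, so \eqref{llp} says $\llp\cdot,\cdot\rrp_{\ga^\ast}$ is invariant under this one-parameter subgroup; differentiating gives skew-adjointness of $\ad^\ast_\eta$ with respect to $\llp\cdot,\cdot\rrp_{\ga^\ast}$, and then unpacking the definition $\llp\al^\ast,\be^\ast\rrp_{\ga^\ast}=\la\al^\ast,\nabla^2 f|_{\ga^\ast}(\be^\ast)\ra$ together with $\la\ad^\ast_\eta\al^\ast,\xi\ra=-\la\al^\ast,\ad_\eta\xi\ra$ recovers \eqref{de-0}. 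Your approach is slightly more direct, avoiding the detour through the pairing; the paper's approach has the small advantage that the skew-adjointness statement it passes through is itself used elsewhere (e.g.\ in the Calabi--Matsushima argument). Either way the content is the same infinitesimal equivariance.
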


\begin{proof}
To prove \eqref{1}, we apply (\ref{inv}) to obtain

$$
d f\big|_{\ga^*}(\be^*)\ = \ {d\over dt}\bigg|_{t=0}\,f(\ga^*+t\cdot\be^*)\
= \ {d\over dt}\bigg|_{t=0}\,f(\Ad_k^*\ga^*+t\cdot\Ad_k^*\be^*)\ =\
d f\big|_{\Ad_k^*\ga^*}(\Ad_k^*\be^*).
$$
For \eqref{hess-Ad}, by applying \eqref{1} we obtain: for all $k\in K$
\begin{eqnarray*}
 &&( \nabla^2f|_{\Ad^\ast_k\al^\ast} )\cdot\Ad_k\be^\ast=
 \left.\frac{d}{dt}\right |_{t=0} df|_{\Ad^\ast_k(\al^\ast+t\be^\ast)} \stackrel{\eqref{1}}{=} \left.\frac{d}{dt}\right |_{t=0}\Ad_k(df|_{\al^\ast+t\be^\ast})
= \Ad_k\circ (\nabla^2 f|_{\al^\ast })(\be^*).
 \end{eqnarray*}
The proof of \eqref{llp} is similar, by definition \eqref{hess-norm} 
$$\llp\Ad_k^*\al^*,\Ad_k^*\be^*\rrp_{\Ad_k^*\ga^*}\ = \ {\dd^2\over\dd s\dd t}\bigg|_{(s,t)=(0,0))}
f(\Ad_k^*\ga^*+s\cdot\Ad_k^*\al^*+t\cdot\Ad_k^*\be^*)
$$
and now we apply (\ref{inv}) again to remove the $\Ad_k^*$ from the last expression.

To prove \eqref{33} we
first rewrite (1) as
$$ \la\xi^*,df\big|_{\al^*}\ra\ = \la\Ad_k^*\xi^*, d f\big|_{\Ad_k^*\al^*}\ra = \left\la \xi^*, \Ad_k^{-1} \big(df\big|_{\Ad_k^*\al^*}\big)\right\ra
$$
\begin{equation}\label{rewrite}
\Ad_k(df\big|_{\al^*})\ = df\big|_{\Ad_k^*\al^*}\in \fk
\end{equation}
Now we substitute $k=\exp(t\cdot \eta)$ so  $\Ad_k^*\ga^*= \ga^*+t\cdot\ad^*_\xi(\ga^*)+ O(t^2)$ and differentiate:
$$\ad^\ast_\eta(df\big|_{\ga^*})\ =\left.\frac{d}{dt}\right |_{t=0}\Ad^\ast_{e^{t\eta}}(d f|_{\ga^\ast})=\left.\frac{d}{dt}\right |_{t=0}(d f|_{\Ad^\ast_{e^{t\eta}}\ga^\ast})= \nabla^2f\big|_{\ga^*}\Big((\ad^*_\eta(\ga^*))\Big).
$$
To prove \eqref{de-0}, we substitute $k=\exp(t\eta)$ in \eqref{llp}. Then by our assumption $\ad^*_\eta(\ga^*)=0 \Longrightarrow \Ad_k^*\ga^*=\ga^*$ we have
$$ \llp\al^*,\be^*\rrp_{\ga^*}\ = \ \llp\Ad_k^*\al^*,\Ad_k^*\be^*\rrp_{\red\Ad_k^*\ga^*}=\ \llp\Ad_k^*\al^*,\Ad_k^*\be^*\rrp_{\red\ga^*}.
$$
 Now differentiating with respect to $t$ gives

\begin{eqnarray*}
 0
& =& \llp\ad^\ast_\eta(\al^*), \be^*\rrp_{\ga^*}\ +  \llp\al^*, \ad^*_\eta(\be^*)\rrp_{\ga^*}\\
& = &\   \la\ad^*_\eta(\al^*), \nabla^2f\big|_{\ga^\ast}(\be^*)\ra + \ \la\al^*, \nabla^2f\big|_{\ga^\ast}\big(\ad^*_\eta(\be^*)\big)\ra\\
& = & \ -\left\la\al^*, \ad_\eta\big(\nabla^2f\big|_{\ga^\ast}(\be^*)\big)\right\ra
+ \left \la\al^\ast, \nabla^2f\big|_{\ga^\ast}\big(\ad^*_\eta(\be^*)\big)\right\ra
\end{eqnarray*}
\end{proof}

\section{Moment maps and convex functions}\label{moment-convex}
Now let $(Z,\om,J)$ be a K\"ahler manifold with K\"ahler form $\om$ and complex structure $J\in \mathrm{End}(TZ)$.  The Riemannian metric is given as:
$$\la u,v\ra_{TZ}:=\om(u,Jv) \text{ and  }\la Ju,v\ra_{TZ}=\om(u,v).$$
Let $K$ be a compact Lie group acting  on $Z$ isometrically  with  $\mu:Z\to\fk^*$ being a $K$-equivariant moment map, that is,  for all $z\in Z$, $V\in T_zZ$ and $\eta\in \fk$ we have
\begin{equation}\label{kequi}
\mu(k\cdot z)\ = \Ad_k^*(\mu(z))
\end{equation}
\begin{equation}\label{moment}
 d\la \mu_z(v),\eta\ra\ = \ \la Jv,\si_z(\eta)\ra_{T_zZ}\ = \ \la Jv,\si_z(\eta)\ra_{_{T_zZ}}
\end{equation}

\begin{prop} \label{dmu}
Let $z\in Z$ and $\xi\in\fk$.  Then
\begin{equation}\label{dmu}
 d\mu_z(\si_z(\xi))\ =\ad^*_\xi(\mu(z))\ \stackrel{\eqref{33}}{=} \big( \nabla^2 f\big|_{\mu(z)}\big)^{-1}\big(\ad_\xi(df|_{\mu(z)}) \big)\in \fk^\ast
\end{equation}
In particular,
\begin{enumerate}
\item If $\de\in \fk_z$ then $\ad^*_\de\mu(z)=0$.
\item  if $\eta^*\in\fk^*$ then (\ref{moment}) with $v=\si(\xi)$ implies
\begin{equation}
\label{3}\llp\ad^*_{df|_{\mu(z)}}(\xi^*),\eta^*\rrp_{\mu(z)}\ = \ -\la {\red J}\si(\xi),\si(\eta)\ra_{T_z Z} \text{ with } \xi:=\nabla^2 f\big|_{\mu} (\xi^\ast),
\eta:=\nabla^2 f\big|_{\mu} (\eta^\ast).
\end{equation}

\end{enumerate}
In particular, $\ad^*_{df|_{\mu(z)}}: \fk^\ast\to \fk^\ast$ is {\red self-adjoint} with respect to the inner product $\llp\cdot,\cdot\rrp$ as it was so on the right hand side of the above equation.
\end{prop}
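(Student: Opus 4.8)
The plan is to peel the proposition into its three layers — the identity \eqref{dmu}, the two ``in particular'' consequences, and the closing adjointness assertion — and to notice that the single engine behind all of them is the $K$-equivariance \eqref{kequi} of $\mu$, fed into the algebraic dictionary of Proposition \ref{prop1}.

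First I would establish \eqref{dmu}. Differentiate \eqref{kequi} along the one-parameter subgroup $k=\exp(t\xi)$: the curve $t\mapsto \mu(\exp(t\xi)\cdot z)$ has derivative $d\mu_z(\sigma_z(\xi))$ at $t=0$, since $\sigma_z(\xi)=\tfrac{d}{dt}\big|_{0}\exp(t\xi)\cdot z$, while $t\mapsto \Ad^*_{\exp(t\xi)}\mu(z)$ has derivative $\ad^*_\xi\mu(z)$. This yields $d\mu_z(\sigma_z(\xi))=\ad^*_\xi\mu(z)$, and the second equality in \eqref{dmu} is then precisely \eqref{33} read at $\gamma^*=\mu(z)$, invoking the assumed invertibility of $\nabla^2 f$. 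Consequence (1) is then immediate: if $\delta\in\fk_z$ then $\sigma_z(\delta)=0$, so substituting into \eqref{dmu} forces $\ad^*_\delta\mu(z)=0$.

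Next, for \eqref{3}, I would insert $v=\sigma_z(\xi)$ into the moment-map identity \eqref{moment} and apply \eqref{dmu} on the left, obtaining the basic relation $\la\ad^*_\xi\mu(z),\eta\ra=\la J\sigma_z(\xi),\sigma_z(\eta)\ra$ for all $\xi,\eta\in\fk$. It then remains to recognize the left-hand coadjoint pairing as the Hessian inner product in \eqref{3}. Writing $\xi=\nabla^2 f|_{\mu}(\xi^*)$, $\eta=\nabla^2 f|_{\mu}(\eta^*)$ and unwinding $\llp\cdot,\cdot\rrp_\mu$ via \eqref{hess-norm}, I would use the symmetry of the Hessian together with \eqref{33} (trading $\ad_\eta(df|_{\mu})$ for $\nabla^2 f|_{\mu}(\ad^*_\eta\mu)$) to verify $\llp\ad^*_{df|_\mu}\xi^*,\eta^*\rrp_\mu=\la\ad^*_\eta\mu,\xi\ra$. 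Since $\la\ad^*_\eta\mu,\xi\ra=-\mu([\eta,\xi])=-\la\ad^*_\xi\mu,\eta\ra$, the basic relation gives exactly $\llp\ad^*_{df|_\mu}\xi^*,\eta^*\rrp_\mu=-\la J\sigma_z(\xi),\sigma_z(\eta)\ra$, which is \eqref{3}; note the minus sign is produced precisely by this $\xi\leftrightarrow\eta$ swap.

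Finally, the adjointness of $\ad^*_{df|_\mu}$ should be read off \eqref{3}: because $\llp\cdot,\cdot\rrp_\mu$ is symmetric and $\nabla^2 f|_{\mu}$ is a fixed isomorphism, the adjointness type of $\ad^*_{df|_\mu}$ is inherited from the symmetry type of the right-hand pairing $(\xi,\eta)\mapsto\la J\sigma_z(\xi),\sigma_z(\eta)\ra$. \textbf{This last step is where the care is needed, and I expect it to be the main obstacle.} Indeed $\la J\sigma_z(\xi),\sigma_z(\eta)\ra=\om(\sigma_z(\xi),\sigma_z(\eta))$ is \emph{anti}symmetric — it is the pullback of the symplectic form, equivalently the Kirillov–Kostant–Souriau pairing $-\mu([\xi,\eta])$ — so the literal argument delivers skew-adjointness of $\ad^*_{df|_\mu}$ relative to $\llp\cdot,\cdot\rrp_\mu$, and the genuinely self-adjoint, real-spectrum operator is then $\sqrt{-1}\,\ad^*_{df|_\mu}$, which is exactly the normalization demanded by the eigenvalue decomposition of Theorem \ref{Calabi}. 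I would therefore pin down once and for all the sign conventions for $\ad^*$, for $\om(\cdot,\cdot)=\la J\cdot,\cdot\ra$, and for the moment map \eqref{moment} before committing to whether ``self-'' or ``skew-adjoint'' is the claim being made, since that bookkeeping is the only genuinely delicate point.
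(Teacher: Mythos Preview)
Your proof is correct and follows essentially the same route as the paper's: differentiate the equivariance \eqref{kequi} for \eqref{dmu}, and unwind the Hessian inner product via the dictionary of Proposition~\ref{prop1} for \eqref{3}. The paper's chain for \eqref{3} passes through \eqref{de-0} (the commutativity $\nabla^2 f|_\mu\circ\ad^*_{df|_\mu}=\ad_{df|_\mu}\circ\nabla^2 f|_\mu$, which depends on \eqref{grad-mu}), whereas your argument uses \eqref{33} and the duality $\la\ad^*_\eta\mu,\xi\ra=-\la\ad^*_\xi\mu,\eta\ra$ directly; both get to $-\la J\sigma(\xi),\sigma(\eta)\ra$ with the same sign.

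Your caution about ``self-adjoint'' is well-placed and in fact matches the paper's own practice: the right-hand side of \eqref{3} equals $-\omega(\sigma(\xi),\sigma(\eta))$, which is antisymmetric, so $\ad^*_{df|_\mu}$ is \emph{skew}-adjoint for $\llp\cdot,\cdot\rrp_\mu$. The paper itself uses exactly this in the proof of Theorem~\ref{Calabi} (equations \eqref{anti}--\eqref{anti1}), where the anti-symmetry is what makes $\sqrt{-1}\,\ad_{df|_\mu}$ Hermitian on $\fg$. So the word ``self-adjoint'' in the proposition is a slip of terminology, and your reading is the one actually required downstream.
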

\begin{proof} For the first equality let $k=\exp(t\cdot\xi)$ and compute

$$\ad^*_\xi(\mu(z))= {d\over dt}\bigg|_{t=0} \Ad_k^*(\mu(z))\ = \
{d\over dt}\bigg|_{t=0} (\mu(k\cdot z))\ =d\mu_z(\si_z(\xi))
$$
The second identity is \eqref{33} in  Proposition \ref{prop1}. For the last identity, we have
Notice that we have
{\blue
\begin{eqnarray*}
&&\llp\,\ad^*_{df|_{\mu(z)}}(\xi^*),\eta^*\,\rrp_{\mu(z)}
=  \llp\,\eta^*,\ad^*_{df|_{\mu(z)}}(\xi^*)\,\rrp_{\mu(z)}=\left\langle\, \eta^*,(\nabla^2f|_\mu)[\ad^*_{df|_{\mu(z)}}(\xi^*)]\,\right\rangle\\
&\stackrel{\eqref{de-0}}{=} &
\langle  \,\eta^*,\ad_{df|_{\mu(z)}}(\nabla^2f|_\mu)(\xi^*)\,\rangle  {\red :\stackrel{\eqref{3}}{=}}
\langle\, \eta^*,\ad_{df|_{\mu(z)}}\xi\,\rangle\\
&\stackrel{\eqref{dmu}}{=}& \langle \,\eta^*,\nabla^2f|_\mu\circ d\mu_z(\sigma_z(\xi))\,\rangle  =
\llp\, \eta^*,d\mu_z(\sigma_z(\xi))\,\rrp_\mu=\llp\,d\mu_z(\sigma_z(\xi)), \eta^*\,\rrp_\mu
\\
&=&\left\langle\,d\mu_z(\sigma_z(\xi)), (\nabla^2f|_\mu)(\eta^*)\,\right\rangle
=\langle\, d\mu_z(\sigma_z(\xi)), \eta\,\rangle
= \om(\sigma_z(\xi),\sigma_z(\eta))\\
&=& -\la J\sigma_z(\xi),\sigma_z(\eta)\ra_{TzZ}
\end{eqnarray*}
}


\end{proof}
As a consequence of the set-up above,  {\red extremal } points can be obtained via minimizing the function $f(\mu(z))$ along $G\cdot z$. \begin{coro}
Let  $\red z(t):=g(t)\cdot z$ with $g(t)\in G$ satisfying:
\begin{equation}\label{dg}
\fg\ni\frac{dg(t)}{dt}g^{-1}(t)=\ii\cdot df|_{\mu(z(t))}\in\ii\fk.
\end{equation}
If we write $g(t)=e^{t\ii\xi} g$ with $\xi\in \fk$ then
$$
\dfrac{dg(t)}{dt}g^{-1}(t)=\ii \xi \ {\red \overset{z(t)=g(t)\cdot z}{ \xRightarrow{\hspace*{1.5cm}}} }\
I\circ\si_{z(0)}(\xi)=\left.\frac{d z(t)}{dt}\right|_{t=0}={\red -}\si_{z(0)}(d f|_{\mu(z(0))}).
$$
That is,
the {\red negative gradient} flow of $f\circ \mu$.

Moreover, we have
$$
\frac{d}{dt}\left({\red -}\log\frac{| g(t)\cdot \hat z|}{|\hat z|}\right)=-\left\la\mu(z(t)),\frac{1}{\ii}\frac{dg(t)}{dt}g^{-1}\right\ra=-\la\mu(z(t)), df|_{\mu(z(t))}\ra
$$
and if we assume further that $0<\la\cdot,\nabla^2 f\big|_{\al^\ast}(\cdot)\ra: \Sym^2\fk^\ast\longrightarrow \RR$ for all $\al^\ast\in \fk^\ast$ and $f(0)=0$ then we will have
$$
\frac{d}{dt}\left({\red -}\log\frac{| g(t)\cdot \hat z|}{|\hat z|}\right)=-\la\mu(z(t)), df|_{\mu(z(t))}\ra\leq - f\circ \mu(z(t)).
\footnote{
For $f\in C^2(\fk^\ast), \ \xi^\ast\in \fk^\ast$ with $\nabla^2 f\geq 0$, we have
\begin{eqnarray*}
f(\xi^\ast)-f(0)
&=&\int_0^1 \frac{d}{dt}(f(t\xi^\ast))dt=\int_0^1 \la \xi^\ast,  df|_{\red t\xi^\ast}\ra dt\\
\left (\because \frac{d}{dt}\la \xi^\ast, df|_{\red t\xi^\ast}\ra_\fk=\la\xi^\ast,\stackbin[\red\in \fk]{}{\underbrace{ \nabla^2 f|_{\red t\xi^\ast}(\xi^\ast)}}\ra_\fk\geq0\right )
&\leq &\la \xi^\ast, df|_{\red 1\cdot \xi^\ast}\ra
\end{eqnarray*}
from which we deduce
$$
f(\mu(z))=f(\mu(z))-f(0)\leq \la \mu(z),df|_{\red \mu(z)}\ra
$$
}
$$
which already appeared in \cite{Donaldson2017}.

\end{coro}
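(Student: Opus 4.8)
The plan is to establish the three assertions in turn: that $z(t)$ is the negative gradient flow of $f\circ\mu$, the first-variation formula for the log-norm functional, and the concluding convexity inequality.

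For the gradient-flow identification I would first write $g(t)=e^{t\ii\xi}g$, so that $\tfrac{dg}{dt}g^{-1}(t)=\ii\xi$ is constant and comparison with \eqref{dg} at $t=0$ forces $\xi=df|_{\mu(z(0))}$. Since the $K$-action is complexified to a holomorphic $G$-action, the infinitesimal action is complex linear, $\si_z(\ii\eta)=J\si_z(\eta)$ for $\eta\in\fk$ (here $J$ is the complex structure, written $I$ in the statement); hence $\tfrac{d}{dt}\big|_{t=0}z(t)=\si_{z(0)}(\ii\xi)=J\si_{z(0)}(df|_{\mu(z(0))})$. To recognize this as minus the gradient of $f\circ\mu$, I would compute by the chain rule $d(f\circ\mu)_z(v)=\la d\mu_z(v),df|_{\mu(z)}\ra$ and then apply the moment-map identity \eqref{moment} with the fixed element $\eta=df|_{\mu(z)}$ to obtain $d(f\circ\mu)_z(v)=\la Jv,\si_z(df|_{\mu(z)})\ra_{TZ}$. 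As $J$ is skew-adjoint for $\la\cdot,\cdot\ra_{TZ}$, this equals $-\la v,J\si_z(df|_{\mu(z)})\ra_{TZ}$, so $\mathrm{grad}(f\circ\mu)_z=-J\si_z(df|_{\mu(z)})$ and $\tfrac{d}{dt}\big|_{t=0}z(t)=-\mathrm{grad}(f\circ\mu)$, as claimed.

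For the first-variation formula, the right-hand identity $\la\mu(z(t)),\tfrac{1}{\ii}\tfrac{dg}{dt}g^{-1}\ra=\la\mu(z(t)),df|_{\mu(z(t))}\ra$ is immediate from \eqref{dg}. For the left-hand identity I would set $w(t)=g(t)\hat z$ in the ambient Hermitian representation and differentiate $\log|w(t)|^2$: since $\tfrac{dw}{dt}=\big(\tfrac{dg}{dt}g^{-1}\big)w=\ii\,df|_{\mu(z(t))}\cdot w$ and $df|_{\mu(z(t))}\in\fk$ acts skew-Hermitianly (so that $\ii\,df|_{\mu(z(t))}$ acts Hermitianly and the expectation $\la\ii\,df|_{\mu(z(t))}w,w\ra$ is real), the derivative reduces to a multiple of $\tfrac{\la\ii\,df|_{\mu(z(t))}w,w\ra}{|w|^2}$. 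Recognizing this quantity as the Fubini--Study moment map paired against $df|_{\mu(z(t))}$ gives $\tfrac{d}{dt}\big(-\log\tfrac{|g(t)\hat z|}{|\hat z|}\big)=-\la\mu(z(t)),df|_{\mu(z(t))}\ra$; this is the standard Kempf--Ness computation already used in \cite{Donaldson2017}.

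Finally, the inequality $-\la\mu(z(t)),df|_{\mu(z(t))}\ra\leq-f\circ\mu(z(t))$ is equivalent to $f(\mu(z))\leq\la\mu(z),df|_{\mu(z)}\ra$, which is exactly the footnote's convexity estimate: writing $f(\xi^*)-f(0)=\int_0^1\la\xi^*,df|_{t\xi^*}\ra\,dt$ and noting that $t\mapsto\la\xi^*,df|_{t\xi^*}\ra$ is nondecreasing because its derivative $\la\xi^*,\nabla^2f|_{t\xi^*}(\xi^*)\ra\geq0$ by \eqref{hess}, one bounds the integral by its value at $t=1$; the normalization $f(0)=0$ and the choice $\xi^*=\mu(z)$ then finish it. The main obstacle is the sign and placement bookkeeping in the first step — one must combine holomorphicity $\si_z(\ii\eta)=J\si_z(\eta)$, the moment-map equation \eqref{moment}, and the skew-adjointness of $J$ consistently, since any slip there reverses the gradient direction — together with making the ambient linearization and its Fubini--Study moment map precise enough that the middle paragraph's derivative is literally $\tfrac{d}{dt}\log|g(t)\hat z|$.
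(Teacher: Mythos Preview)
Your proposal is correct and matches the paper's approach. In the paper this corollary is stated without a separate proof environment: the gradient-flow identification is carried by the displayed implication, the log-norm derivative is the standard Kempf--Ness identity (simply asserted), and the convexity bound is exactly the footnote computation you reproduce; your write-up fleshes out these steps with the right sign bookkeeping (holomorphicity $\si_z(\ii\eta)=J\si_z(\eta)$, skew-adjointness of $J$, and the moment-map pairing) and adds nothing extraneous.
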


\subsection{Calabi-Matsushima decomposition}
In this section, prove a generalized version of classical Calabi-Matsushima decomposition \cite{Calabi}.
Let $\fg=\fk\otimes_\RR\CC$ and
$$
\fg_z=\{\xi_1+\ii\xi_2\in \frak{g}\,:\, \si(\xi_1)+J\si(\xi_2)=0\}\subset \fg.
$$
\begin{theo}\label{Calabi}
Assume that $z\in Z$ is a critical point of $f\circ\mu$, which will be called an {\red $f$-extremal point}.  Then
\begin{enumerate}
\item $d f\big|_{\mu(z)}\in \fk_z=\{\xi\in \fk\,:\, \si(\xi)=0\}$.
\vskip .03in
\item $\ad^*_{df|_{\mu(z)}}\mu(z)=0$.
\vskip .03in
\item We have
$$ \fg_z\ = \ \bigoplus_{\lam\leq 0} \,\frak{g}_\lam\ \ {\rm where}\ \
\fg_\lam\ = \ \{\xi\in \fg_z\,:\, \ii\ad_{df|_{\mu(z)}}\xi=\lam\,\xi\,\}
$$
\item In particular, we have
$ \fg_0\ = \ \fk_z\otimes_\RR\CC\cong \fg_{\rm red}:=\fg/\fg_{\rm solv}
$
where $\fg_{\rm solv}\subseteq \fg$ is the maximal solvable subalgebra.
\end{enumerate}
\end{theo}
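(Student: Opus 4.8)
The plan is to prove the four assertions in sequence: (1) and (2) come directly from the moment-map equation and the identities of Section~2, and the genuine work lies in fixing the sign of the eigenvalues in (3) and in the two inclusions of (4). Throughout I write $\zeta:=df|_{\mu(z)}\in\fk$.

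\textbf{Parts (1) and (2).} For (1) I would differentiate $f\circ\mu$ at $z$: for $v\in T_zZ$, $d(f\circ\mu)_z(v)=\la d\mu_z(v),\zeta\ra=\la Jv,\si_z(\zeta)\ra_{T_zZ}$ by~\eqref{moment}. Criticality makes this vanish for all $v$, and since $J$ is invertible and the metric nondegenerate we get $\si_z(\zeta)=0$, i.e. $\zeta\in\fk_z$. Part (2) is then immediate from~\eqref{grad-mu} (legitimate since strict convexity makes $\nabla^2 f$ invertible while $\ad_\zeta\zeta=0$); alternatively it follows from $\zeta\in\fk_z$, as any element of $\fk_z$ annihilates $\mu(z)$ under $\ad^*$.

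\textbf{Part (3), setup.} Since $\zeta\in\fk_z$ one checks $\ii\zeta\in\fg_z$, so the derivation $A:=\ad_{\ii\zeta}=\ii\,\ad_\zeta$ preserves the subalgebra $\fg_z$; and because $\ad_\zeta$ is skew-Hermitian for a suitable $\Ad_K$-invariant Hermitian inner product on $\fg$, $A$ is diagonalizable with real spectrum, giving $\fg_z=\bigoplus_\lambda\fg_\lambda$. It remains to exclude $\lambda>0$. For $\xi=\xi_1+\ii\xi_2\in\fg_\lambda$, separating the $\fk$- and $\ii\fk$-components of $A\xi=\lambda\xi$ yields $\ad_\zeta\xi_1=\lambda\xi_2$ and $\ad_\zeta\xi_2=-\lambda\xi_1$, while $\xi\in\fg_z$ gives $\si(\xi_1)+J\si(\xi_2)=0$, hence $J\si(\xi_1)=\si(\xi_2)$.

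\textbf{Part (3), the sign — the main obstacle.} I would rewrite~\eqref{3} on $\fk$ via the intertwining~\eqref{de-0} (available because $\ad^*_\zeta\mu(z)=0$): for $a,b\in\fk$, $\la(\nabla^2 f|_{\mu(z)})^{-1}\ad_\zeta a,\,b\ra=-\om(\si(a),\si(b))$, where $B(a,b):=\la(\nabla^2 f|_{\mu(z)})^{-1}a,b\ra$ is a positive-definite inner product on $\fk$. Taking $a=\xi_1$, $b=\xi_2$ and using $J\si(\xi_1)=\si(\xi_2)$ gives $\lambda\,B(\xi_2,\xi_2)=-\la J\si(\xi_1),\si(\xi_2)\ra_{T_zZ}=-|\si(\xi_2)|^2\le 0$. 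Positive-definiteness of $B$ forces $\lambda\le 0$, and if $\lambda>0$ then $\xi_2=0$, whence $\xi_1=0$ from $\ad_\zeta\xi_2=-\lambda\xi_1$; thus $\fg_\lambda=0$ for $\lambda>0$. I expect this sign bookkeeping to be the crux, as it is exactly where the compatibility of $J$ and $\om$ and the positivity of $\nabla^2 f$ must conspire to produce $\lambda\le 0$ rather than $\lambda\ge 0$.

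\textbf{Part (4).} For $\fg_0=\fk_z\otimes_\RR\CC$: the inclusion $\supseteq$ follows once $\zeta$ is central in $\fk_z$, which I obtain from $B(\ad_\zeta a,b)=-\om(\si(a),\si(b))=0$ for $a,b\in\fk_z$ (as $\si(a)=\si(b)=0$) together with $\ad_\zeta a\in\fk_z$, giving $\ad_\zeta a=0$ upon taking $b=\ad_\zeta a$; then $\fk_z$ and $\ii\fk_z$ lie in $\fg_0$. For $\subseteq$, take $\xi=\xi_1+\ii\xi_2\in\fg_0$, so $\ad_\zeta\xi_1=0$; by~\eqref{33}, $\ad^*_{\xi_1}\mu(z)=(\nabla^2 f|_{\mu(z)})^{-1}\ad_{\xi_1}\zeta=0$, hence $d\mu_z(\si(\xi_1))=0$ by~\eqref{dmu}, and pairing with $\xi_2$ through~\eqref{moment} with $J\si(\xi_1)=\si(\xi_2)$ yields $|\si(\xi_2)|^2=0$; thus $\si(\xi_2)=\si(\xi_1)=0$ and $\xi\in\fk_z\otimes_\RR\CC$. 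Finally $\fg_0=\fk_z\otimes_\RR\CC$ is reductive (the complexification of the compact $\fk_z$) with $\zeta\in\fz(\fg_0)$, while $\mathfrak{n}:=\bigoplus_{\lambda<0}\fg_\lambda$ is a nilpotent ideal since $[\fg_\lambda,\fg_\nu]\subseteq\fg_{\lambda+\nu}$ and only non-positive weights occur; identifying $\mathfrak{n}$ with $\fg_{\rm solv}$, the splitting $\fg_z=\fg_0\ltimes\mathfrak{n}$ is a Levi decomposition and $\fg_0\cong\fg_z/\mathfrak{n}=\fg_{\rm red}$.
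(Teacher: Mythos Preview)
Your proposal is correct and follows essentially the same route as the paper: parts (1)--(2) via the moment-map identity and Proposition~\ref{prop1}/\ref{dmu}, the eigenspace decomposition from self-adjointness of $\ii\ad_\zeta$, and the sign argument by computing $\lam\,B(\xi_2,\xi_2)=-|\si(\xi_2)|^2$ using \eqref{3} together with $J\si(\xi_1)=\si(\xi_2)$. Your bilinear form $B(a,b)=\la(\nabla^2 f|_{\mu(z)})^{-1}a,b\ra$ on $\fk$ is exactly the transport of the paper's $\llp\cdot,\cdot\rrp_{\mu(z)}$ on $\fk^\ast$ under the isomorphism $\nabla^2 f|_{\mu(z)}$, so the core computation is identical; your treatment of part (4) is in fact more explicit than the paper's, which leaves the inclusion $\fk_z\otimes_\RR\CC\subseteq\fg_0$ (equivalently, the centrality of $\zeta$ in $\fk_z$) implicit.
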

\begin{proof}
To prove (1) we assume $z$ is a critical point of $f\circ\mu$ so $d(f\circ\mu)_z(v)=0$ for all tangent vectors $v\in T_zZ$. Thus
$$df|_{\mu(z)}(d\mu_z(v))=\la d\mu_z(v), df|_{\mu(z)}\ra=0 \text{ for all }v\in T_zZ.$$
Now \eqref{moment} implies $\la Iv,\si_z(df_{\mu(z)}\ra=0$ for all $v$, hence $\si_z(df\big|_{\mu(z)})=0$.

Part (2) follows from Part (1) of Proposition \ref{dmu}.

To prove (3), we let $k=\exp(t\cdot df|_{\mu(z)})$. Then (1) implies $k\cdot z=z$ so $\mu(z)=\Ad_k^*\mu(z)$ by (\ref{kequi}). Thus
if we substitute $k$ and $\al^\ast=\mu(z)$ into part (2) of Proposition \ref{prop1}, we have

$$
\llp\xi^*,\eta^*\rrp_{\mu(z)}\ = \ \llp\Ad_k^*\xi^*,\Ad_k^*\eta^*\rrp_{\mu(z)}
$$
and differentiate with respect to $t$ we get

\begin{equation}\label{anti}
\llp\ad^*_{df|_{\mu(z)}}\xi^*,\eta^*\rrp_{\mu(z)}\ +  \llp\xi^*,\ad^*_{df|_{\mu(z)}}\eta^*\rrp_{\mu(z)}\ = \ 0
\end{equation}
or equivalently, combining Proposition \ref{prop1} part (4) with the identification:
$$
\nabla^2 f\big|_{\al^\ast}:
\begin{array}{ccccc}
\fk^\ast & \xrightarrow{\hspace*{1.6cm}}  & \fk \\
\xi^\ast  & \xmapsto{\hspace*{1.0cm}}  & \xi:=\nabla^2 f\big|_{\al^\ast} (\xi^\ast)&
\end{array}
$$
we obtain

\begin{equation}\label{anti1}
\llp\ad_{df|_{\mu(z)}}\xi,\eta\rrp_{\mu(z)}\ + \ \llp\xi,\ad_{df|_{\mu(z)})}\eta\rrp_{\mu(z)}\ = \ 0 \text{ for all } \xi,\eta\in\fk.
\end{equation}
Thus $\lla\cdot,\cdot\rra_{\mu(z)}$ extends uniquely to a {\red Hermitian} inner product
on
$\fg:=\fk\otimes\CC$ for which the linear operator
$$\ii\ad_{df|_{\mu(z)}}:\fg\longrightarrow \fg \text{ is Hermitian}.
$$
In particular, all of its eigenvalues are {\red real} numbers.
Moreover, $df|_{\mu(z)}\in \fk_z$ so $\ii df_{\mu(z)}\in \fg_z$.
To prove (3), we need the following:
\begin{lemm} Let $z$ be a critical point of $f\circ\mu$ and assume
$$\ii\ad_{df|_{\mu(z)}}\xi=\lam\xi$$
for some $\xi\in\fg_z$
 Then $\lam\leq 0$.
\end{lemm}

\begin{proof}  Write $\xi=\xi_1+\ii\xi_2$ with $\xi_1,\xi_2\in \fk$. Then

$$\lam\xi_1+\lam\ii \xi_2=\lam\xi\ = \ \ii\ad_{df|_{\mu(z)}}(\xi_1+\ii \xi_2)\ \Longrightarrow \
\left \{
\begin{array}{lcc}
 \ad_{df|_{\mu(z)}}\xi_1&=&\lam\xi_2\\
  & & \\
 \ad_{df|_{\mu(z)}}\xi_2&=&-\lam\xi_1.
\end{array}
\right.
$$
Now (\ref{3}) implies

$$\lam\llp\xi_2,\xi_2\rrp_{\nabla^2f|_{\mu(z)}}=\llp\ad_{df|_{\mu(z)}}(\xi_1),\xi_2\rrp_{\nabla^2f|_{\mu(z)}}\ \stackrel{\red \eqref{3}}{=} \ -\la J\si(\xi_1),\si(\xi_2)\ra_{TZ}\ = \ -\la\si(\xi_2),\si(\xi_2)\ra_{TZ}
\ = \ -|\si(\xi_2)|^2_{TZ}
$$
where in the last equality the fact that $\xi\in \fg_z$ so $\si(\xi_1)+J\si(\xi_2)=0$ is used. Thus
$\lam\leq 0$ with equality $|\si(\xi_2)|_{TZ}=|\si(\xi_1)|_{TZ}=0$, i.e. if and only if $\xi\in\fk_z\otimes\CC\subset\fg_z$.
\end{proof}
Part (3) and (4) follows from the Lemma above, our proof is thus completed.
\end{proof}
\begin{exam}
Notice that the convexity of $f$ is {\red necessary} in Theorem \ref{Calabi} above. Let us consider $K\times K$-action on $Z\times Z$ with 
$$
f:
\begin{array}{ccccc}
\fk^\ast\oplus \fk^\ast & \xrightarrow{\hspace*{1.6cm}}  & \RR & &\\
(\al^\ast,\be^\ast)  & \xmapsto{\hspace*{1.0cm}}  & |\al^\ast |^{2}-|\be^\ast |^{2} &
\end{array}
$$
Let $z_0\in Z$ be a {\red extremal} point, i.e. $\mu_Z(z_0)\in \fk_{z_0}$. 
Then $(z_0,z_0)\in Z\times Z$ is  a $f$-extremal point  as 
$$df|_{\mu_{Z\times Z}(z_0,z_0)}=(\mu_Z(z_0),-\mu_Z(z_0))\in \fk_{z_0}\oplus \fk_{z_0}.$$
Then 
$$
\begin{array}{ccccc}
\ii\ad_{df|_{\mu_{Z\times Z}(z_0,z_0)}}(\xi,0)&=&[\ii (\mu_Z(z_0),-\mu_Z(z_0)), (\xi,0)]&=&\lam\cdot(\xi,0)\\
\ii\ad_{df|_{\mu_{Z\times Z}(z_0,z_0)}}(0,\xi)&=&[\ii (\mu_Z(z_0),-\mu_Z(z_0)), (0,\xi)]&=&-\lam\cdot (0,\xi)
\end{array}
$$
that is 
$$\ii\ad_{df|_{\mu_{Z\times Z}(z_0,z_0)}}:\fg_{z_0}\oplus \fg_{z_0}\longrightarrow \fg_{z_0}\oplus \fg_{z_0}$$ 
is {\red indefinite} contrast to Theorem \ref{Calabi} where $f$ is assumed to be {\red convex}.
\end{exam}

\subsection{$\mu$-invariant and extremal vector fields}
In this subsection, we introduce an {\red obstruction}  for the existence of an $f$-extremal point (defined in Theorem \ref{Calabi}) and introduce the {\red extremal vector field} for those $z\in Z$ when the $\blue\dim_\RR \fk_{z}$ is {\red maximal} (cf. Definition \ref{ex-def}).

\begin{lemm}\label{R-C}
Consider the $G$-action on the polynomial ring $\CC[\fg^\ast]$ induced by the $\Ad^\ast_G$-action on $\fg^\ast$, we have $\CC[\fg^\ast]^G\cong\CC[\ft^\ast]^W$ with $W$ being the {\red Weyl group} of $G$. In particular,
\begin{enumerate}
\item any $f\in \RR[\ft^\ast]^W=\RR[\fk^\ast]^K$ is a restriction of a unique $f\in \CC[\fg^\ast]^{G}$. (cf. Reference can be found \href{https://mathoverflow.net/questions/182871/generators-of-invariant-polynomials-of-semisimple-lie-algebra}{here}.)
\item
$$
df|_{(\bullet)}:
\begin{array}{ccccc}
\fg^\ast & \xrightarrow{\hspace*{1.6cm}}  & \fg & &\\
\xi^\ast  & \xmapsto{\hspace*{1.0cm}}  & df|_{\xi^\ast}&\in &T^\ast_{\xi^\ast} \fg^\ast\equiv \fg
\end{array}
$$
satisfying
\begin{equation}\label{Ad-G-f}
d f\big|_{\Ad_g^*\al^*}\ = \Ad_g (df\big|_{\al^*})\in \fg,\ \forall g\in G.
\end{equation}
Moreover, for $\xi^\ast_0\ne\xi^\ast_1\in\fk^\ast$ satisfying   $\nabla^2 f|_{\overline{\xi^\ast_0\xi^\ast_1}}\geq 0$ in the sense of \eqref{hess} with
$\overline{\xi^\ast_0\xi^\ast_1}\subset \fk^\ast$ denoting the line joining $\xi^\ast_0$ and $\xi^\ast_1$.  Then
$$
\la\xi^\ast_1-\xi^\ast_0, df|_{\xi^\ast_1}\ra-\la \xi^\ast_1-\xi^\ast_0,df|_{\xi^\ast_0}\ra>0
$$
with $\red =$ if and only if $\nabla^2 f\Big|_{\overline{\xi^\ast_0\xi^\ast_1}}\equiv 0$. In particular, if one assume that
\begin{equation}\label{conv}
\nabla^2 f|_{\fk^\ast}\geq 0\text{ and }
\mathrm{graph}(f):=\{(\xi^\ast, f(\xi^\ast))\mid \xi^\ast\in \fk^\ast\}\in \fk^\ast\times \RR
\text{ contains {\red no line segment.} }
\end{equation}
Then $df|_{(\bullet)}:\Ad^\ast_g\fk^\ast\longrightarrow \Ad_g \fk$ is one-to-one for any $g\in G$.
\end{enumerate}

\end{lemm}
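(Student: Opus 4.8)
The plan is to treat the three assertions in turn, reducing everything to (a) the Chevalley restriction theorem, (b) a differentiation of the invariance identity, and (c) a one–variable convexity computation along the segment $\overline{\xi^\ast_0\xi^\ast_1}$. For the identification $\CC[\fg^\ast]^G\cong\CC[\ft^\ast]^W$ and for part (1), I would invoke the Chevalley restriction theorem: restriction of polynomials from $\fg^\ast$ to the complexified Cartan $\ft^\ast$ induces an isomorphism onto the $W$-invariants, where $W=N_K(T)/T$ is simultaneously the Weyl group of the compact group $K$ and of its complexification $G$. Since every $K$-orbit in $\fk^\ast$ meets $\ft^\ast$, and $W$-invariance on $\ft^\ast$ propagates to $K$-invariance on $\fk^\ast$, one has $\RR[\ft^\ast]^W=\RR[\fk^\ast]^K$. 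A real $W$-invariant polynomial then extends, by complexifying its coefficients, to an element of $\CC[\ft^\ast]^W\cong\CC[\fg^\ast]^G$; uniqueness is immediate because $\fk^\ast$ is a real form of $\fg^\ast$, hence Zariski dense, so a polynomial vanishing on $\fk^\ast$ vanishes identically.

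For the equivariance \eqref{Ad-G-f} in part (2), I would differentiate the (now $G$-)invariance $f(\Ad^\ast_g\al^\ast)=f(\al^\ast)$ in the $\al^\ast$ variable, exactly as in the passage leading to \eqref{rewrite} but with $g\in G$ in place of $k\in K$: for every $\be^\ast$ one obtains $\la\Ad^\ast_g\be^\ast, df|_{\Ad^\ast_g\al^\ast}\ra=\la\be^\ast, df|_{\al^\ast}\ra$, and rewriting the left-hand side through the pairing identity $\la\Ad^\ast_g\be^\ast,\zeta\ra=\la\be^\ast,\Ad_g^{-1}\zeta\ra$ yields $\Ad_g^{-1}(df|_{\Ad^\ast_g\al^\ast})=df|_{\al^\ast}$, i.e. $df|_{\Ad^\ast_g\al^\ast}=\Ad_g(df|_{\al^\ast})$.

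The analytic heart is the monotonicity inequality. I would set $v:=\xi^\ast_1-\xi^\ast_0$ and $\phi(t):=f(\xi^\ast_0+tv)$ for $t\in[0,1]$, so that $\phi'(t)=\la v, df|_{\xi^\ast_0+tv}\ra$ and $\phi''(t)=\la v,\nabla^2 f|_{\xi^\ast_0+tv}(v)\ra=\llp v,v\rrp_{\xi^\ast_0+tv}\geq 0$ by the hypothesis $\nabla^2 f|_{\overline{\xi^\ast_0\xi^\ast_1}}\geq 0$. Integrating gives
$$
\la\xi^\ast_1-\xi^\ast_0, df|_{\xi^\ast_1}\ra-\la\xi^\ast_1-\xi^\ast_0, df|_{\xi^\ast_0}\ra=\phi'(1)-\phi'(0)=\int_0^1\phi''(t)\,dt\geq 0,
$$
with equality precisely when $\phi''\equiv 0$ on $[0,1]$, that is, when $f$ is affine along the segment, which is the meaning of $\nabla^2 f|_{\overline{\xi^\ast_0\xi^\ast_1}}\equiv 0$ (the second directional derivative in the direction $v$ vanishes along the whole segment).

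Finally, for injectivity under \eqref{conv}: if $df|_{\xi^\ast_0}=df|_{\xi^\ast_1}$ with $\xi^\ast_0\neq\xi^\ast_1$ in $\fk^\ast$, then the displayed quantity vanishes, forcing $f$ to be affine on $\overline{\xi^\ast_0\xi^\ast_1}$, so $\mathrm{graph}(f)$ would contain the segment joining $(\xi^\ast_0,f(\xi^\ast_0))$ and $(\xi^\ast_1,f(\xi^\ast_1))$, contradicting \eqref{conv}; hence $df|_{(\bullet)}$ is injective on $\fk^\ast$. Transporting by \eqref{Ad-G-f}, the map on $\Ad^\ast_g\fk^\ast$ is $\Ad^\ast_g\al^\ast\mapsto\Ad_g(df|_{\al^\ast})$, i.e. $\Ad_g\circ df|_{(\bullet)}\circ(\Ad^\ast_g)^{-1}$, which is injective since $\Ad_g$ and $\Ad^\ast_g$ are isomorphisms. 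The step I expect to demand the most care is the equality case: matching ``the segment direction lies in the kernel of the Hessian along $\overline{\xi^\ast_0\xi^\ast_1}$'' with the stated $\nabla^2 f|_{\overline{\xi^\ast_0\xi^\ast_1}}\equiv 0$ and with the no–line–segment hypothesis of \eqref{conv}; a secondary point is justifying uniqueness of the complex extension through Zariski density of the real form $\fk^\ast\subset\fg^\ast$.
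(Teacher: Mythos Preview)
Your proposal is correct and follows essentially the same approach as the paper: Chevalley restriction for part (1), differentiation of the invariance identity (as in Proposition~\ref{prop1}(1)) for \eqref{Ad-G-f}, and the one-variable integral $\int_0^1 \la\xi^\ast_1-\xi^\ast_0,\nabla^2 f|_{\xi^\ast_t}(\xi^\ast_1-\xi^\ast_0)\ra\,dt$ for the monotonicity. You actually supply more detail than the paper in two places---the Zariski density argument for uniqueness of the complex extension, and the explicit conjugation $\Ad_g\circ df|_{(\bullet)}\circ(\Ad^\ast_g)^{-1}$ for injectivity on $\Ad^\ast_g\fk^\ast$---both of which are welcome, since the paper leaves these implicit. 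Your caveat about the equality case is well taken: what the computation yields is vanishing of the second directional derivative along $\overline{\xi^\ast_0\xi^\ast_1}$ in the direction $\xi^\ast_1-\xi^\ast_0$, which is precisely what is needed to produce a line segment in $\mathrm{graph}(f)$ and contradict \eqref{conv}.
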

\begin{proof}
By our assumption that $G$ is reductive, we conclude  part  (1) from the Harish-Chandra isomorphism and Proposition \ref{prop1}-(1).
In particular, we have isomorphism of the GIT quotients  $\fg^\ast/\!/G\cong \ft^\ast/\!/W$.

Next we prove the last part. The proof of identity \eqref{Ad-G-f} is the same as the part (1) of Proposition \ref{prop1}. Let $\xi^\ast_0\ne\xi^\ast_1\in\fk^\ast$ such that $df|_{\xi^\ast_0}=df|_{\xi^\ast_1}$ and
$\xi^\ast_t=(1-t)\xi^\ast_0+t\xi^\ast_1\in\fk^\ast$ then we have
\begin{eqnarray*}
\la\xi^\ast_1-\xi^\ast_0, df|_{\xi^\ast_1}\ra-\la \xi^\ast_1-\xi^\ast_0,df|_{\xi^\ast_0}\ra
&=&\int_0^1\frac{d}{dt}\la\xi^\ast_1-\xi^\ast_0, df|_{\xi^\ast_t}\ra dt\\
&=&\int_0^1\la\xi^\ast_1-\xi^\ast_0,\nabla^2 f|_{\xi^\ast_t}\cdot (\xi^\ast_1-\xi^\ast_0)\ra dt>0.
\end{eqnarray*}
Hence, our claim follows from \eqref{Ad-G-f}.
\end{proof}

\begin{exam}
Let $\fg=\fgl(n)$ and $\fg>\fk=\fu(n)$. Then  $f(\xi)=-\tr(\xi^k),\ \xi\in \fg$ extends $f(\xi)=|\xi|^{k/2}, \forall \xi\in \fu=\{\xi\in\fgl |\xi=-\bar\xi^t\}$. And
$df|_\xi=k \xi^{k-1}\in \fgl(n)$.
\end{exam}

We recall the definition of the moment map:

\begin{equation}\label{def}
 d\la\mu_z(v),\xi\ra\ = \om(v,\si_z(\xi))= \la Jv,\si_z(\xi)\ra_{_{T_zZ}}\ \ \text{for all $\xi\in\fk$ and $v\in T_zZ$.}
\end{equation}
In the presence of a complex structure, by comparing the $(0,1)-, (1,0)-$ components
$$
(\bullet)^{(1,0)}:={1\over 2}(1-\ii J)(\bullet), \ \ \ (\bullet)^{(0,1)}:={1\over 2}(1+\ii J)(\bullet)\ \ {\rm and} \ \
\dbar f(v)\ := \ {1\over 2}(df(v)+\ii df(J\cdot v)),
$$
 we  obtain for all $\xi\in\fk$ and $v\in T_z^\CC Z$.
\begin{eqnarray}\label{def1}
2\left\la\dbar\mu\big|_z(v),\xi\right\ra
&=&\left\la (d-\ii J\circ d)\mu\big|_z(v),\xi\right\ra=\left\la d\mu\big|_z(v),\xi\right\ra+\left\la \ii d\mu\big|_z(J\cdot v),\xi\right\ra\\
&=&\om \big(v^{(0,1)},\si^{}_z(\xi)\big)=  \om\Big(v,\big(\si_z(\xi)\big)^{\red(1,0)}\Big)\nonumber
\end{eqnarray}
Now we claim that \eqref{def1} holds for all $\xi\in \fk\otimes_\RR\CC=\fg$,
 { but this follows from the fact that the  {\red holomorphic $G$-action} on $Z$ naturally extends the $\RR$ linear morphism $\si_z:\fk\longrightarrow T_zZ$  to a {\red $\CC$-linear} morphism $\si_z:\fg\longrightarrow T^\CC_zZ$.}
\footnote{
More explicitly,
it suffices to prove it holds if $\xi\in\fk$ is replaced by $\ii\xi$.
For this, we multiply both sides of (\ref{def})
by $\ii $. This has the effect of replacing $\xi$ by $\ii\xi$ on the left. On the right, we get
$$ \ii\cdot\big(\si(\xi)\big)^{1,0}\ = \ \ii\cdot {1\over 2}(1-\ii J)\si(\xi) \ = \ {1\over 2}(1-\ii J)J\si(\xi)\ = \ {1\over 2}(1-\ii J)\si(\ii\xi)
\ = \ \big(\si(\ii\xi)\big)^{1,0}
$$
i.e. $\si$ is a $\CC$-linear map.
}
This establishes (\ref{def1}).
\begin{lemm}\label{holo-char}
Let $C\subset Z$ be any complex subvariety and $\{\xi(z)\}:C\to \fg$ being holomorphic function satisfying $\xi(z)\in \fg_z$.
Then $\la\mu(z),\xi(z)\ra$ is a holomorphic in $z\in C$.
\end{lemm}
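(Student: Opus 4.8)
The plan is to verify holomorphicity infinitesimally, by showing that the $\fk^\ast$-valued moment map paired against $\xi$ has vanishing $\dbar$ along $C$. First I would fix the meaning of the pairing: since $\mu(z)\in\fk^\ast$ while $\xi(z)\in\fg=\fk\otimes_\RR\CC$, I extend $\la\cdot,\cdot\ra:\fk^\ast\times\fk\to\RR$ to a $\CC$-bilinear pairing $\fk^\ast\times\fg\to\CC$, so that $g(z):=\la\mu(z),\xi(z)\ra$ is a genuine $\CC$-valued smooth function on $C$. Because holomorphicity is local and may be tested on the smooth locus $C_{\reg}$ (the restriction extends across $\Sing(C)$ by continuity/normality), I would work at a smooth point $z\in C$ and pick an antiholomorphic tangent vector $v\in T_z^{0,1}C\subset T_z^{0,1}Z$; the goal is then $dg_z(v)=0$.

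Next I would apply the product rule,
$$
dg_z(v)\ =\ \la d\mu_z(v),\xi(z)\ra\ +\ \la\mu(z),d\xi_z(v)\ra .
$$
The second term vanishes outright: $\xi:C\to\fg$ is holomorphic, so $d\xi_z(v)=0$ for $v$ of type $(0,1)$. For the first term, for $v\in T^{0,1}$ one has $d\mu_z(v)=\dbar\mu_z(v)$ (indeed $Jv=-\ii v$, so the definition $\dbar\mu(v)=\tfrac12(d\mu(v)+\ii\, d\mu(Jv))$ collapses to $d\mu(v)$), and I invoke \eqref{def1}, which was established for all $\xi\in\fg$ just before the lemma:
$$
2\la\dbar\mu_z(v),\xi(z)\ra\ =\ \om\big(v,(\si_z(\xi(z)))^{(1,0)}\big).
$$
Here the hypothesis $\xi(z)\in\fg_z$ is exactly what kills the right-hand side: under the $\CC$-linear extension $\si_z(\xi_1+\ii\xi_2)=\si_z(\xi_1)+J\si_z(\xi_2)$, membership in $\fg_z$ means precisely $\si_z(\xi(z))=0$, hence $(\si_z(\xi(z)))^{(1,0)}=0$. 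Thus $\la\dbar\mu_z(v),\xi(z)\ra=0$, both terms vanish, and $dg_z(v)=0$ for every $v\in T^{0,1}C$, giving $\dbar g=0$.

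The argument is short, and I do not expect a serious analytic obstacle; the points that require care are purely bookkeeping. The main one is to confirm that $\fg_z$ as defined (via $\si(\xi_1)+J\si(\xi_2)=0$) is genuinely the kernel of the $\CC$-linear $\si_z:\fg\to T_z^\CC Z$, so that \eqref{def1} can be applied with its right-hand side identically zero; and to make sure \eqref{def1} is used in its $\fg$-version rather than only its $\fk$-version. The only other subtlety is the passage from $C_{\reg}$ to all of $C$, which is handled by the standard extension principle for holomorphic functions on reduced complex spaces and does not affect the computation above.
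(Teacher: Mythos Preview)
Your proof is correct and follows essentially the same approach as the paper's: apply the Leibniz rule to $\dbar\la\mu,\xi\ra$, kill $\la\mu,\dbar\xi\ra$ by holomorphicity of $\xi$, and kill $\la\dbar\mu,\xi\ra$ via \eqref{def1} together with $\si_z(\xi(z))=0$ for $\xi(z)\in\fg_z$. The paper's proof is the one-line version of exactly this computation; your added remarks on the $\CC$-bilinear extension of the pairing, the identification of $\fg_z$ with $\ker\si_z$, and the passage from $C_{\reg}$ to $C$ are reasonable elaborations but not a different route.
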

\begin{proof}
It follows from \eqref{def1} and the assumption that $\dbar \xi=0$ and $\xi(z)\in\fg_z$ that
$$\dbar\la\mu,\xi\ra\big|_z=\stackbin[\eqref{def1}]{}{\underbrace{\la \dbar \mu,\xi\ra|_z}}+\stackbin[\dbar \xi=0]{}{\underbrace{\la\mu,\dbar \xi\ra|_z}}=0.$$
\end{proof}

\begin{theo} [\bf $\mu$-invariant]\label{Fut}
Let $\red \xi\in \fk_z, \ \eta\in \fg_z$ and $\phi: \fg\longrightarrow  \RR$ satisfying:
\begin{equation}\label{Ad-xi}
\phi(\Ad_g \xi)=\phi(\xi) \text{ for all }g\in G.
\end{equation}
Then
\begin{enumerate}
\item $\la d\phi|_{\Ad_g\xi},\Ad_g\eta \ra$ is independent of $g\in G$. 
\item The function $\chi: G\to \CC$ defined via
\begin{eqnarray}\label{chi}
\chi(g):= \la \mu(g\cdot z)- d\phi |_{\Ad_g\xi},\Ad_g\eta \ra
=\la \mu(g\cdot z),\Ad_g\eta \ra-\la d\phi |_{\Ad_g\xi},\Ad_g\eta \ra
\end{eqnarray}
is constant.
\end{enumerate}
\end{theo}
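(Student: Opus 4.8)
The plan is to treat the two parts with quite different tools: part (1) is pure Lie theory coming from the invariance of $\phi$, while part (2) is a holomorphicity argument that, once set up, lets the compactness of $K$ do the rest.

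For part (1) I would simply differentiate the invariance \eqref{Ad-xi}. Using the $\Ad_G$-invariance of $\phi$, differentiating $\phi(\Ad_g(\xi+s\eta))=\phi(\xi+s\eta)$ at $s=0$ gives the equivariance of the gradient map, exactly in the spirit of \eqref{1} and \eqref{Ad-G-f}:
\begin{equation*}
\la d\phi|_{\Ad_g\xi},\Ad_g\eta\ra=\la d\phi|_{\xi},\eta\ra ,
\end{equation*}
which is manifestly independent of $g$ (equivalently $\Ad_g^{\ast}\!\left(d\phi|_{\Ad_g\xi}\right)=d\phi|_{\xi}$). This is all of part (1); note it uses only the invariance of $\phi$, and neither $\xi\in\fk_z$ nor $\eta\in\fg_z$.

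For part (2), write $\Theta(g):=\la d\phi|_{\Ad_g\xi},\Ad_g\eta\ra$ and $\Psi(g):=\la\mu(g\cdot z),\Ad_g\eta\ra$, so that $\chi=\Psi-\Theta$. By part (1) the term $\Theta\equiv c$ is constant, and therefore it suffices to prove that $\Psi$ is constant on $G$. I would establish this from two facts: $\Psi$ is holomorphic, and $\Psi$ is invariant under left multiplication by $K$. Holomorphicity is the substantive point and is exactly the mechanism of Lemma \ref{holo-char}: the orbit map $g\mapsto g\cdot z$ and the adjoint map $g\mapsto\Ad_g\eta$ are both holomorphic, and since $\eta\in\fg_z$ we have $\Ad_g\eta\in\Ad_g\fg_z=\fg_{g\cdot z}$, i.e. $\si_{g\cdot z}(\Ad_g\eta)=0$. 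Pulling the one-line $\dbar$-computation of Lemma \ref{holo-char} back along the orbit map and using \eqref{def1} — whose right-hand side $\om\big(\cdot,(\si_{g\cdot z}(\Ad_g\eta))^{(1,0)}\big)$ vanishes — together with $\dbar(\Ad_g\eta)=0$, yields $\dbar_G\Psi=0$. For the $K$-invariance, for $k\in K$ the equivariance \eqref{kequi} of $\mu$ and the $\Ad$-invariance of the pairing give
\begin{equation*}
\Psi(kg)=\la\mu(kg\cdot z),\Ad_{kg}\eta\ra=\la\Ad_k^{\ast}\mu(g\cdot z),\Ad_k\Ad_g\eta\ra=\la\mu(g\cdot z),\Ad_g\eta\ra=\Psi(g).
\end{equation*}

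To conclude, I would invoke the principle that a holomorphic function on the connected complex group $G$ that is annihilated by every left-invariant vector field in the $\fk$-directions is constant: the $K$-invariance gives $\tfrac{d}{dt}\big|_{0}\Psi(e^{t\zeta}g)=0$ for $\zeta\in\fk$, and holomorphicity upgrades this to the $\ii\fk$-directions, since $\tfrac{d}{dt}\big|_{0}\Psi(e^{t\ii\zeta}g)=\ii\,\tfrac{d}{dt}\big|_{0}\Psi(e^{t\zeta}g)=0$; as $\fg=\fk\oplus\ii\fk$ this forces $d\Psi=0$, so $\Psi$, and hence $\chi$, is constant. The main obstacle is the holomorphicity step: this is where the hypothesis $\eta\in\fg_z$ is indispensable and where the moment-map identity \eqref{def1} must be deployed. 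Once holomorphicity is secured, the genuinely clean observation is that the compact-group equivariance of $\mu$ controls only the $\fk$-directions, yet holomorphicity automatically transfers that vanishing to the a priori uncontrolled $\ii\fk$-directions — which is precisely what makes $\chi$ globally constant.
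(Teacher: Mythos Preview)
Your proposal is correct and follows essentially the same approach as the paper: part (1) by differentiating the $\Ad_G$-invariance of $\phi$, and part (2) by showing the second term is constant via part (1) and the first term $\Psi(g)=\la\mu(g\cdot z),\Ad_g\eta\ra$ is constant via holomorphicity (from Lemma \ref{holo-char} and \eqref{def1}, using $\Ad_g\eta\in\fg_{g\cdot z}$) plus $K$-equivariance of $\mu$. The only cosmetic difference is that the paper phrases the last step as ``$K\cdot z\subset G\cdot z$ is totally real'', whereas you work directly on $G$ and use $\fg=\fk\oplus\ii\fk$ to upgrade $\fk$-invariance to full constancy; your formulation is arguably cleaner since $\Psi$ is genuinely a function on $G$ rather than on the orbit.
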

\begin{proof}
The first part follows from our assumption $\phi(\Ad_g \xi)=\phi(\xi)$ and the proof of  Proposition \ref{prop1} (1).

For the second part, the independence of $g\in G$ for the {\red first} term in \eqref{chi} was proved in \cite[Proposition 6]{Wang2004}. We give more conceptual proof as follows.  By the $\Ad_K$-equivariance of $\mu$, one obtains that $\chi |_{K\cdot z}$ is constant function. On the other hand, Lemma \ref{holo-char} implies that $\chi|_{G\cdot z}$ is holomorphic as $\Ad_g(\xi)$ varies holomorphically along $G\cdot z\subset Z$. So $\chi|_{G\cdot z}$ must be a constant function since $\chi |_{K\cdot z}$ is and $K\cdot z\subset G\cdot z$ is a totally real subvariety. 
%

For the second term in \eqref{chi}, the independence of $g\in G$ follows from part (1) of Lemma \ref{R-C} and  the fact
\begin{equation}\label{f-Ad-g}
\la d\phi (\Ad_g\xi), \Ad_g\eta\ra
\stackrel{\eqref{Ad-G-f} }
{=}\la\Ad_g^\ast\big( d\phi (\xi)\big),\Ad_g\eta\ra
=\la d\phi (\xi),\eta\ra.\\
\end{equation}
\end{proof}
\begin{rema}\label{no-convex}
Notice that  {\red no convexity } is assumed for $\phi$. This is particular the case in \cite{HanLi2020}'s work on $g$-soliton, that is {\red well-defindness} of Futaki-invariants needs no convexity.
\end{rema}
\begin{exam}\label{trace}
Let $\chi:\fg\to \CC$ be Lie algebra morphism 
\footnote{In particular, it descends to a linear mophism $\fg/[\fg,\fg]\to \CC$.}
and $\psi\in \CC[\fg]$ (or even more generally, $\psi$ is an analytic function defined over $\fg$) with $(\CC[\fg],[\bullet,\bullet])$ being equipped with a natural {\red polynomial Poisson algebra} structure induced from $(\fg,[\bullet,\bullet])$. Then $\phi:=\chi\circ\psi$ satisfies the assumption \eqref{Ad-xi} as
$$
\left.\frac{d}{dt}\right|\phi\big(\Ad_{e^{t\eta}}\xi\big)=\chi\left(\psi'(\xi)\cdot [\eta,\xi]\right)=\chi\left([\psi(\xi),\xi]\right)=0.
$$
\end{exam}

\begin{defi}
Let $f:\fk^\ast\to \RR$ be a {\red convex } function and for any $\xi\in \fk$ let
$$
\displaystyle \widehat f(\xi ):=\sup _{\eta^\ast \in \fk^\ast}(\eta^{*}(\xi)-f(\eta^\ast)):\fk\longrightarrow \RR,\ \ \ \ \xi\in\fk
$$
be the {\red Legendre transform } of $f$,  and $\forall \eta\in \fg$ we write $\red (\eta f)(\xi):=(\eta f)\big|_\xi$ by regarding $\eta\in \fg=T_\xi\fg$ as a vector field on $\fg$.
\end{defi}
\begin{coro}\label{chevalley}
Let $f\in \RR[\ft^\ast]^W=\RR[\fk^\ast]^{\Ad^\ast_K}$ satisfying $\nabla^2 f|_{\fk^\ast}>0$ in the sense of \eqref{hess}. Then
for $\xi\in \fk_z$ and $\eta\in \fg_z$ the function
\begin{eqnarray}\label{Fg'}
F(g):&=& \la \mu(g\cdot z)-d\widehat f|_{\Ad_g\xi},\Ad_g\eta \ra
=\la \mu(g\cdot z),\Ad_g\eta \ra-\la d\widehat f|_{\Ad_g\xi},\Ad_g\eta \ra 
\end{eqnarray}
is independent of $g\in G$,
where $d\widehat f=(df)^{-1}$ is defined via:
$\fg^\ast\ni d\widehat f|_\xi=(df)^{-1}(\xi):=\eta^\ast\Longleftrightarrow df|_{\eta^\ast}=\xi\in \fg.$
%
\end{coro}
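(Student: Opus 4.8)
The plan is to recognize Corollary~\ref{chevalley} as the special case $\phi=\widehat f$ of the $\mu$-invariant Theorem~\ref{Fut}. The hypotheses $\xi\in\fk_z$ and $\eta\in\fg_z$ are already those of that theorem, and the quantity $F(g)$ in \eqref{Fg'} is literally the function $\chi(g)$ of \eqref{chi} with $d\phi$ replaced by $d\widehat f$. Hence it suffices to check that $\widehat f$ fulfils the two demands of Theorem~\ref{Fut}: that its differential is $(df)^{-1}$, and that it obeys the invariance \eqref{Ad-xi} needed to run \eqref{f-Ad-g}.

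First I would record the Legendre duality $d\widehat f=(df)^{-1}$. Since $\nabla^2 f|_{\fk^\ast}>0$, the hypothesis \eqref{conv} holds and Lemma~\ref{R-C}(2) makes $df$ injective, so the supremum defining $\widehat f(\xi)$ is attained at the unique $\eta^\ast$ with $df|_{\eta^\ast}=\xi$. The first-order (envelope) condition then gives $d\widehat f|_\xi=\eta^\ast=(df)^{-1}(\xi)$, which is exactly the defining relation for $d\widehat f$ in the statement.

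Next I would produce the invariance. A change of variables $\eta^\ast\mapsto\Ad_k^\ast\eta^\ast$ in the supremum, using the $\Ad_K^\ast$-invariance of $f$, shows $\widehat f(\Ad_k\xi)=\widehat f(\xi)$ for $k\in K$. To get the full $G$-equivariance that feeds \eqref{f-Ad-g}, I would invert \eqref{Ad-G-f}: writing $\be=df|_{\al^\ast}$ and applying $(df)^{-1}$ to $df|_{\Ad_g^\ast\al^\ast}=\Ad_g\be$ yields $d\widehat f|_{\Ad_g\be}=\Ad_g^\ast\,d\widehat f|_\be$, whence $\langle d\widehat f|_{\Ad_g\xi},\Ad_g\eta\rangle=\langle d\widehat f|_\xi,\eta\rangle$ is independent of $g$. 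This handles the second term of $F(g)$; the first term $\langle\mu(g\cdot z),\Ad_g\eta\rangle$ is $g$-independent by the holomorphicity of Lemma~\ref{holo-char} together with the totally-real density of $K\cdot z$ in $G\cdot z$, exactly as in Theorem~\ref{Fut}. Combining the two, $F(g)$ is constant.

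The step I expect to be the main obstacle is making sense of $(df)^{-1}$ at the complex arguments $\Ad_g\xi\in\fg$ rather than merely on the real locus $\fk$. The Legendre transform furnishes $\widehat f$ only on $\fk$, and---as the Example $f=-\tr(\xi^k)$ illustrates, where $(df)^{-1}$ involves a root---$\widehat f$ is typically not polynomial, so it cannot be extended to $\fg$ by the Chevalley/Harish-Chandra device of Lemma~\ref{R-C}(1). The right move is instead to \emph{define} $d\widehat f$ directly as the inverse of the $G$-equivariant polynomial map $df\colon\fg^\ast\to\fg$, using Lemma~\ref{R-C}(2) for injectivity along each $\Ad_g^\ast\fk^\ast$; the equivariance \eqref{Ad-G-f} then guarantees that $(df)^{-1}(\Ad_g\xi)=\Ad_g^\ast(df)^{-1}(\xi)$ exists as soon as $(df)^{-1}(\xi)$ does, which is all the argument uses.
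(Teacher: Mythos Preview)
Your proposal is correct and follows essentially the same route as the paper: reduce to Theorem~\ref{Fut} with $\phi=\widehat f$, using the Chevalley extension of $f$ to $\CC[\fg^\ast]^{\Ad^\ast_G}$ so that $df$ and hence $d\widehat f=(df)^{-1}$ are $G$-equivariant. The paper's proof is a two-line invocation of Chevalley and Theorem~\ref{Fut}; you have unpacked the one point the paper leaves implicit, namely that $\widehat f$ itself need not be polynomial (so cannot be extended via Lemma~\ref{R-C}(1)), and that the correct object to transport $G$-equivariantly is $d\widehat f=(df)^{-1}$ via inversion of \eqref{Ad-G-f}. This is exactly the content behind the paper's terse ``Hence assumption~\eqref{Ad-xi} is met,'' and your last paragraph makes that step honest.
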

\begin{proof}
By Chevalley isomorphism, we know that every $f\in \RR[\ft^\ast]^W=\RR[\fk^\ast]^{\Ad^\ast_K}$ is the restriction of a function which by abusing of notation still denoted by $f\in \CC[\fg^\ast]^{\Ad^\ast_G}$. Hence assumption \eqref{Ad-xi} is met and our claim follows from Theorem \ref{Fut}.
\end{proof}
\begin{rema}\label{gen-Chevalley}
Indeed, the Chevalley type isomorphism holds in more general sense, which was included in Appendix \ref{ext}.
\end{rema}
\begin{defi}
We define  the {\red $\mu$-invariant} for $(z,\xi)\in Z\times \fk_z$ as:
$$
\la \mu(z)-d\widehat f|_\xi,\eta\ra=\la \mu(z),\eta\ra-{\blue (\eta \widehat{f})|_\xi}
\text{ for } \eta\in \fg_z.
$$
In particular, if $z\in Z$ is a {\red critical } point of $f\circ\mu$ then
$$
df|_{\mu(z)}=\xi\in \fk_z \ {\red \xLeftrightarrow{\hspace*{1cm}}} d\widehat f|_\xi=\mu(z) \text{ and hence }\la \mu(z)-d\widehat f|_\xi,\eta\ra=0, \ \ \ \forall  \eta\in \fk_z.
$$
\end{defi}


\begin{exam}
Let $Z=\PP^n$ and $\mu(z)=\ii\dfrac{zz^\ast}{|z|^2}$ then for any $A\in \GL(n+1)$ and $\xi\in \fu(n+1)_z$ then  we have
$\xi\cdot z=\lam \cdot z$ and
\begin{eqnarray*}
\la \mu_\PP(A\cdot z), \Ad_A \xi\ra
&=&\tr \left(\frac{Azz^\ast A^\ast}{|Az|^2}\cdot A\xi A^{-1}\right)=\frac{\tr (Azz^\ast A^\ast\cdot A\xi A^{-1})}{|Az|^2} \\
&=&\frac{\tr (zz^\ast A^\ast\cdot A\xi )}{|Az|^2}=\frac{\tr (z^\ast A^\ast\cdot A\cdot \xi z )}{|Az|^2}\\
\big(\because\xi\cdot z=\lam z\big)
&=&\lam\cdot \frac{\tr (z^\ast A^\ast\cdot A z )}{|Az|^2}=\lam
\end{eqnarray*}
\end{exam}

\begin{lemm}\label{uni-ex}
For any $z\in Z$, there is {\red at most one}  $\xi_f\in \fk_z$ when exists solving
\end{lemm}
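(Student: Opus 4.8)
The plan is to reduce uniqueness to the \emph{strict monotonicity} of $d\widehat f$, which is the dual face of the strict convexity of $f$ already imposed in \eqref{hess}. The defining equation for $\xi_f$ is the vanishing of the $\mu$-invariant on the stabilizer, namely
$$\la\mu(z)-d\widehat f|_{\xi_f},\eta\ra=0\qquad\text{for all }\eta\in\fk_z.$$
Suppose $\xi_f,\xi_f'\in\fk_z$ are two solutions. First I would subtract the two identities, which cancels the common term $\mu(z)$ and leaves
$$\la d\widehat f|_{\xi_f}-d\widehat f|_{\xi_f'},\eta\ra=0\qquad\text{for all }\eta\in\fk_z.$$
Since $\fk_z\subseteq\fk$ is a linear subspace, the difference $\xi_f-\xi_f'$ is itself an admissible test vector, so I may substitute $\eta=\xi_f-\xi_f'$.

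The key step is then to identify $\la d\widehat f|_{\xi_f}-d\widehat f|_{\xi_f'},\xi_f-\xi_f'\ra$ as a strictly positive quantity whenever $\xi_f\neq\xi_f'$. Setting $\xi_t=(1-t)\xi_f'+t\xi_f$ and integrating the gradient along this segment (exactly as in the proof of Lemma \ref{R-C}(2)) gives
$$\la d\widehat f|_{\xi_f}-d\widehat f|_{\xi_f'},\xi_f-\xi_f'\ra=\int_0^1\la\nabla^2\widehat f|_{\xi_t}(\xi_f-\xi_f'),\xi_f-\xi_f'\ra\,dt.$$
Because $d\widehat f=(df)^{-1}$, the inverse-function identity yields $\nabla^2\widehat f|_{\xi_t}=\big(\nabla^2f|_{d\widehat f|_{\xi_t}}\big)^{-1}$, so $\nabla^2\widehat f>0$ inherits positivity directly from \eqref{hess}. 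Hence the integrand is strictly positive unless $\xi_f-\xi_f'=0$, and comparison with the vanishing obtained above forces $\xi_f=\xi_f'$.

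The hard part will be the bookkeeping around the Legendre transform rather than any genuine geometric difficulty: one must check that $\widehat f$, equivalently $d\widehat f=(df)^{-1}$, is defined at $\xi_f,\xi_f'$ and along the whole segment $\xi_t$, and that its Hessian is positive definite there. This is supplied by Lemma \ref{R-C}(2) together with \eqref{conv}, which makes $df\colon\fk^\ast\to\fk$ injective (indeed a bijection onto $\fk$), so that $(df)^{-1}$ is globally well defined and smooth and the Hessian-inversion formula applies. I note that one may bypass the second derivative entirely by invoking the standard monotonicity inequality for the gradient of a strictly convex function, $\la d\widehat f|_a-d\widehat f|_b,a-b\ra>0$ for $a\neq b$, applied with $a=\xi_f$ and $b=\xi_f'$; either route gives the conclusion with no input beyond the convexity hypotheses already in force.
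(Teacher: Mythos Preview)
Your proof is correct and follows essentially the same route as the paper's: both assume two solutions, test against $\eta=\xi_1-\xi_0$, interpolate along the segment $\xi_t$, and use that $\nabla^2\widehat f|_{\xi_t}=(\nabla^2 f|_{(df)^{-1}(\xi_t)})^{-1}>0$ to force the difference to vanish. The paper carries out the inverse-function step explicitly by differentiating $df|_{(df)^{-1}(\xi_t)}=\xi_t$, whereas you quote the Hessian of the Legendre transform directly, but this is only a notational difference.
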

\begin{equation}\label{ex-vf-0}
\la \mu(z)-d\widehat f|_{\xi_f},\eta\ra_\fk=0,\ \forall\eta\in \fk_z
\end{equation}
\begin{proof}
Let $\xi_i\in\fk_z,\ i=0,1$ and $\xi_t=(1-t)\xi_0+t\xi_1\in\fk_z$. Then we have
\begin{eqnarray*}
&&\la (df)^{-1}(\xi_1),\xi_1-\xi_0\ra-\la(d f)^{-1}(\xi_0),\xi_1-\xi_0\ra\\
&=&\int_0^1\frac{d}{dt}\Big\la(df)^{-1}(\xi_t),\xi_1-\xi_0\Big\ra \cdot dt\\
\end{eqnarray*}
Differentiating
$df|_{df^{-1}(\xi_t)}=\xi_t$ implies:
$$
\nabla^2f|_{df^{-1}(\xi_t)}\Big(\stackbin[\red\in\fk^\ast]{}{\underbrace{\dfrac{d}{dt} \big(df^{-1}(\xi_t)\big)}}\Big)=\frac{d\xi_t}{dt}=\xi_1-\xi_0
$$
from which we deduce
\begin{eqnarray*}
&&\int_0^1\frac{d}{dt}\Big\la(df)^{-1}(\xi_t),\xi_1-\xi_0\Big\ra \cdot dt\\
&=&\int_0^1\la(\nabla^2 f|_{(d f)^{-1}(\xi_t)})^{-1} \cdot (\xi_1-\xi_0),\xi_1-\xi_0\Big\ra\cdot dt\\
(\eqref{nabla-f})
&>&0
\end{eqnarray*}
as $(\nabla^2 f)^{-1}>0$ by our assumption \eqref{conv},
from which we deduce that extremal vector $\xi_z$ must be  {\red unique}  by letting
$
\eta=\xi_0-\xi_1\in \fk_z.
$

The final conclusion follows from our assumption by letting $\xi=\xi_f\in \fk_z$ be the {\red unique } solution to
$$(df)^{-1}(\xi)-\pi_{\fk_z^\ast} (\mu(z)) \in \fk_z^{\perp}:=\{\xi^\ast\in \fk_z^\ast\mid \la \xi^\ast,\eta\ra=0,\ \forall \eta\in \fk_z\}\subset \fk^\ast_z.$$
\end{proof}
\begin{defi}\label{ex-def}
Let $z\in Z$ and $f$ satisfies the convexity assumption \eqref{conv} and 
\begin{itemize}
\item
\begin{equation}\label{solv}
\blue\dim_\RR\fk\cap \fg_{z}=\dim_\RR \fk_z=\dim_\CC (\fg_z/(\fg_z)_{\rm Solv}\big)\ .\ 
\footnote{This condition is necessary thanks to Theorem \ref{Calabi}, where $(\fg_z)_\Solv\subset\fg_z$ is the maximal solvable subalgebra.}
\end{equation}
that is, $\dim\fk_z$ achieves maximal dimension among $z'\in G\cdot z$, and
\item  The composition of morphism
$$
\fz(\fk_z)  \overset{d\widehat f=(df)^{-1}}{ \xrightarrow{\hspace*{1.5cm}}}  \fk ^\ast \overset{\pi_{\fk^\ast_z}}{ \xrightarrow{\hspace*{1.5cm}}} \fz(\fk_z)^\ast_z
$$
is {\red surjective},\ where 
$\fz(\fk_z)\subset\fk_z$ is the center of $\fk_z$ and
$$\pi_{\fk^\ast_z}:\fk^\ast\longrightarrow \fz(\fk_z)^\ast_z\  \text{ is the projection induced by the inclusion } \fz(\fk_z)\subset \fk.$$
\end{itemize}

Let   $\xi_f\in \fz(\fk_z)$ be the  {\red unique} solution (whose
existence is guaranteed by 
Lemma \ref{uni-ex}) 

\begin{equation}\label{ex-vf}
\la \mu(z)-d\widehat f|_{\xi_f},\eta\ra_\fk=0,\ \forall\eta\in \fk_z.
\end{equation}
We call $\xi_f$ the \emph{\red $f$-extremal vector field } at $z$ inspired by the work of  \cite{FutakiMabuchi1995}. 
In particular, $z$ is {\red $f$-extremal} when $df|_{\mu(z)}=\xi_f$.
\end{defi}
\begin{rema}
Notice that the \eqref{ex-vf} condition implies that $\xi_f\in \fz(\fk_z)\subset\fk_z$, the center of $\fk_z$. To see that, one notice that for all $\eta\in \fk_z$ and  $k\in K_z$, then 
\[
0=\langle \Ad_{k}\mu(z)-\Ad_{k}(df|_{\xi}), \Ad_{k}\eta\rangle
=\langle \mu(z)-df|_{\Ad_{k}{\xi}}, \Ad_{k}\eta\rangle
\] 
Since $\Ad_k:\fk_z\to\fk_z$ is an isomorphism, we obtain $\Ad_k\xi=\xi$ by the uniquenss of {\red extremal vector field} (cf. Lemma \ref{uni-ex}), hence $\xi_f\in \fz(\fk_z)$ as we claimed.
\end{rema}

Our next result, stating that if $\xi_f(z)$ is the $f$-extremal vector field at $z$ then for any $z'\in G\cdot z$ satisfying the condition \eqref{solv}, there is a unique $\xi_f(z')$ canonically related to $\xi_f(z)$. More precisely, we have the following:

\begin{coro} Let $z\in Z$ be a point satisfying \eqref{solv} and  $\xi_f(z)\in \fk_z$ is the {\red  $f$-extremal vector field} at $z$.
For any $z'=g\cdot z\in Z,\ g\in G$  at which \eqref{solv} is satisfied, there is a $g'\in G_{g\cdot z}$ such that 
$$\xi_f(z')=\xi_f(g\cdot z)=\Ad_{g'\circ g}\xi_f(z)\in \fk_{g\cdot z}=(\Ad_g\fg_z)\cap \fk$$
is the {\red $f$-extremal vector field} at $z'=g\cdot z\in Z$.
\footnote{Although $g,g'$ are not necessarily unique,  $\xi_f(z')$ is {\red uniquely} determined by $z'\in Z$. One should also notice that $g'$ seems to be necessary as $\Ad_g\xi_f(z)$ might not be in $\fk$.}
\end{coro}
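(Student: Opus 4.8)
The plan is to produce the vector $\tilde\xi:=\Ad_{g'g}\xi_f(z)$ as a solution of the defining equation \eqref{ex-vf} for the $f$-extremal vector field at $z':=g\cdot z$, and then to conclude $\tilde\xi=\xi_f(z')$ from the uniqueness in Lemma \ref{uni-ex}. The point of inserting the auxiliary $g'\in G_{g\cdot z}$ is that $\Ad_g\xi_f(z)$ a priori lies only in $\fg$, not in the fixed real form $\fk$; the element $g'$ will conjugate it back into $\fk_{g\cdot z}$ while fixing the base point $z'$.

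First I would pin down $\fk_{z'}$ up to conjugacy inside the stabilizer. Since $\fg_{z'}=\Ad_g\fg_z$, the map $\Ad_g\colon\fg_z\to\fg_{z'}$ is a Lie algebra isomorphism carrying the solvable radical $(\fg_z)_\Solv$ onto $(\fg_{z'})_\Solv$, hence inducing an isomorphism of the reductive quotients. By hypothesis \eqref{solv}, imposed at both $z$ and $z'$, the subalgebras $\fk_z$ and $\fk_{z'}$ are compact real forms of $\fg_z/(\fg_z)_\Solv$ and $\fg_{z'}/(\fg_{z'})_\Solv$ respectively; equivalently $K_z$ and $K_{z'}$ are maximal compact subgroups of $G_z$ and $G_{z'}$. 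Therefore $\Ad_g\fk_z$ is, as the image of a compact real form under an isomorphism, again a compact real form of $\fg_{z'}/(\fg_{z'})_\Solv$, so $gK_zg^{-1}$ and $K_{z'}$ are two maximal compact subgroups of $G_{z'}=gG_zg^{-1}$. Invoking the conjugacy of maximal compact subgroups (Cartan--Mostow), I obtain $g'\in G_{g\cdot z}$ with $\Ad_{g'g}\fk_z=\fk_{g\cdot z}$; since $\xi_f(z)\in\fz(\fk_z)\subset\fk_z$, this already gives $\tilde\xi\in\fk_{g\cdot z}$.

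Next I would verify the defining equation by applying Corollary \ref{chevalley} to the single group element $g'g$, with the fixed data $\xi=\xi_f(z)\in\fk_z$ and $\eta\in\fk_z\subset\fg_z$, using that $(g'g)\cdot z=g'\cdot(g\cdot z)=g\cdot z=z'$ because $g'\in G_{z'}$. The $G$-invariance supplied by that corollary then yields, for every $\eta\in\fk_z$,
\[
\la\mu(z')-d\widehat f|_{\tilde\xi},\,\Ad_{g'g}\eta\ra=\la\mu(z)-d\widehat f|_{\xi_f(z)},\,\eta\ra=0,
\]
the last equality being the defining property \eqref{ex-vf} of $\xi_f(z)$. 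As $\eta$ ranges over $\fk_z$ the vector $\Ad_{g'g}\eta$ ranges over $\Ad_{g'g}\fk_z=\fk_{z'}$ by the previous step, so $\tilde\xi\in\fk_{z'}$ solves $\la\mu(z')-d\widehat f|_{\tilde\xi},\eta'\ra=0$ for all $\eta'\in\fk_{z'}$. Lemma \ref{uni-ex} then forces $\tilde\xi=\xi_f(z')$, and the same uniqueness shows that $\xi_f(z')$ depends only on $z'$ and not on the choices of $g$ or $g'$.

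I expect the genuine obstacle to be the first step: one must know that \eqref{solv} really makes $\fk_z$ and $\fk_{z'}$ \emph{compact real forms} of the respective reductive parts — this is exactly the place where the maximality hypothesis cannot be dropped, as Theorem \ref{Calabi} already indicates — and one must then apply the conjugacy of maximal compact subgroups inside $G_{z'}$, which is merely an algebraic group with reductive part rather than a reductive group itself. Once this step is in hand, the remainder is formal, resting only on the $G$-invariance of Corollary \ref{chevalley} and the uniqueness of Lemma \ref{uni-ex}.
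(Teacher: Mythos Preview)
Your proposal is correct and follows essentially the same approach as the paper: both arguments locate $g'\in G_{g\cdot z}$ by invoking conjugacy of maximal compact subgroups (the paper cites Cartan--Iwasawa--Malcev where you cite Cartan--Mostow) to force $\Ad_{g'g}\fk_z=\fk_{g\cdot z}$, then transport the defining equation \eqref{ex-vf} via the $G$-invariance of the $\mu$-invariant (the paper uses Theorem~\ref{Fut}, you use its consequence Corollary~\ref{chevalley}), and finish with the uniqueness of Lemma~\ref{uni-ex}. Your cautionary remark about the first step is apt and matches the paper's reliance on hypothesis \eqref{solv} at both $z$ and $z'$.
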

\begin{proof}
By our assumption \eqref{solv}
$$\dim_\CC(\fk_z:=\fg_z\cap \fk)=\dim_\CC\Big(\fk_{g\cdot z}:=\fg_{g\cdot z}\cap \fk=(\Ad_g \fg_z)\cap \fk\Big)=\dim_\CC \fg_z/(\fg_z)_\Solv, $$
 both $\Ad_g(\fk_z=\fg_z\cap \fk),\ \fg_{g\cdot z}\cap\fk\subset\fg_{g\cdot z}$ are the Lie algebras of maximal compact subgroups of $G_z$. In particular, they are conjugate to each other inside $\fg_z$ by the Cartan-Iwasawa-Malcev Theorem, hence 
\begin{equation}\label{k-g'g}
\exists \ g'\in G_{g\cdot z}, \text{ such that } \Ad_{g'}\circ\Ad_g(\fk_z)=\fk_{g\cdot z}\subset\fg_{g\cdot z}
\end{equation}
since $\fk_{g\cdot z}=\Ad_g \fg_z\cap \fk=\fg_{g\cdot z}\cap \fk$. By Theorem \ref{Fut} and our assumption $\xi_f(z)$ being the {\red $f$-extremal vector field} at $z$, $\forall\eta\in \fk_z$ we have
\begin{eqnarray*}
0=\left\la \mu(z)-d\widehat f|_{\xi_f(z)},\eta\right\ra_\fg
&=&\left\la \mu\big((g'\circ g)\cdot z\big)-d\widehat f|_{\Ad_{g'\circ g}\xi_f(z)},\Ad_{g'\circ g}\eta\right\ra_\fg\\
\Big(\because \ g'\in G_{g\cdot z}\Big)
&=&\left\la \mu\big(g\cdot z\big)-d\widehat f|_{\Ad_{g'\circ g}\xi_f(z)},\Ad_{g'\circ g}\eta\right\ra_\fg
\end{eqnarray*}
as $\Ad_{g'\circ g}\xi_f(z),\Ad_{g'\circ g}\eta\in \fk_{g\cdot z}=\fk_{z'}$ by \eqref{k-g'g}. By the uniqueness $\xi_f$ in Definition \ref{ex-def}, one obtains
$$\xi_f(z')=\xi_f(g\cdot z)=\Ad_{g'\circ g}\xi_f(z)$$
which depends only on $g\cdot z\in Z$ but not on $g,g'$.

\end{proof}

\begin{exam}
Let $\Big(Z=(\PP^1)^{d}, \om_Z=\sum_{i=1}^d\pi_i^\ast \om_{\PP^1}\Big)$ and $K=\SU(2)<\SL(2)$ and let $z_t(d'):=d'\{[1,t]\}+(d-d')\{[0,1]\}\in Z$, then
for $d'>0$ we have
$$
\fk_{z_t(d')}=
\left\{
\begin{array}{ccc}
\fu(1)=\ii\RR=\RR\cdot\xi_0 & & t=0\\
 & & \\
 0 & & t\ne 0
\end{array}
\right.
\text{ and }\ \
\red
\xi_{f}=
\left\{
\begin{array}{ccccc}
\xi_0 & & t=0, & \dim \fk_{z_t}=1\\
 & & & &\\
\nexists & & t\ne 0, & \dim \fk_{z_t}=0
\end{array}
\right.
$$
even though $z_t(d/2)$ are  polystable.
\end{exam}

\begin{exam}
Let $\la\bullet,\bullet\ra_\fk$ denote the Killing form on $\fk$, which induces an isomorphism of $\fk\to \fk^\ast$, hence equips $\fk^\ast$ with an inner product $|\bullet|_{\fk^\ast}$. If
$f=|\bullet |^2_{\fk^\ast}$ then the extremal vector field is the unique vector $\xi_f\in \fk_z$ satisfying:
$$
\la \xi_f, \eta\ra_\fk=\la\mu(z),\eta\ra_\Lie \ \text{ for all }\eta\in \fk_z.
$$
Notice that $f(\xi^\ast)=|\xi^\ast|^2_{\fk^\ast}=-\tr((\xi^\ast)^2)\in \RR[\fk^\ast]^W$, thanks to the fact
$$
\fk:=\{\xi\in \fg \mid \xi=-\theta(\xi)\}\subset \fg .
$$
with $\theta:\fg\to\fg$ being the Cartan involution for $\fk$. Hence, $|\xi^\ast|^2_{\fk^\ast}\in \RR[\fk^\ast]^W$ is the restriction of $\tr((\xi^\ast)^2)\in\CC[\fk^\ast]^{\Ad_G}$ to $\fk$.
\end{exam}

\section{Applications in K\"ahler geometry}
Now we apply the theory we developed in the previous section to K\"ahler geometry.
Let $(X,\om_X)$ be a symplectic manifold with a {\red pre-quantum} complex line bundle $(L,h,\nabla)\to X$ equipped with a Hermitian metric $h$ together with a $h$-compatible connection $\nabla$ such that its curvature form satisfies: $\Ric(\nabla)=\om_X$.
Let
$$
\cJ_\Int(X,\om_X):=\{ J\in \Ga(\End(TX))\mid J^2=-I, \om_X(J\cdot, J\cdot)=\om_X(\cdot,\cdot),\ \om_X(\cdot,J\cdot)>0 \text{ and } N_{\om_X}(J)=0\}
$$
with $N_\om(J)$ denoting the Nijenhuis tensor associated to $J$.
So  $\cJ_\Int(X,\om)$  is the space of integrable $\om_X$-compatible almost complex structures on $(X,\om_X)$, which is an infinite-dimensional K\"aher manifold
if equipped with two K\"ahler forms
\begin{itemize}
\item Donaldson-Fujiki's K\"ahler form $(\cJ_\Int,\Omega_\cJ)$ introduced in \cite{Fujiki1990} and \cite{Donaldson1997}.
\item the {\red Berdntsson's} K\"ahler form $(\cJ_\Int,\lla \cdot,\cdot\rra_\cJ)$ defined in \cite[Theorem 1]{Donaldson2017}, which is different from the one defined in \cite{Donaldson1997}.
\end{itemize}
Let $\Ham(X,\om)$ be the group of Hamiltonian diffeomorphism, thanks to the work of \cite{Donaldson1997} and \cite{Donaldson2017}, both cases admit a moment map. We will apply the theory developed in the previous sections to the K\"ahler manifold
$(Z,\om):=(\cJ_\Int,\lla \cdot,\cdot\rra) 
.$


\subsection{ $\Ham(X,\om_X)$-action on $\cJ_\Int(X,\om_X)$}
\begin{defi}\label{Th}
A vector field $\xi\in \Ga(TX)$ is {\red Hamiltonian} with respect to $\om_X$, if there is function $\theta_{\xi,\om_X}\in C^\infty (X)$ unique up to adding a constant satisfying
$d\theta_{\xi,\om_X}(\cdot):=\om(\xi,\cdot)$, and
$\theta_{\xi,\om_X}$ is called a  {\red Hamiltonian} of $\xi$.
Let
$$\fham(X,\om_X):=\{ \xi\in \Ga(TX)\mid \xi \text{ is Hamiltonian} \}=\left\{\theta\in C^\infty(X)\left| \int_X\theta\om_X^n=0\right.\right\}$$
denotes the Lie algebra of the Hamiltonian diffeomorphism $\Ham(X,\om_X)<\Diff(X)$.
\end{defi}
To simplify the notation, from now on we will write $\om=\om_X$ if no confusion is caused. For any $g\in \Diff(X)$ and $\xi\in \fham(X,\om=\om_X)$,
we have
\begin{eqnarray}\label{th-om}
&&\om\Big( g_\ast\big(\xi|_{g^{-1}(x)}\big)\big|_x, \bullet|_x\Big)\nonumber=\om( g_\ast\big(\xi|_{g^{-1}(x)}\big)\big|_x, g_\ast\circ (g^{-1})_\ast(\bullet|_x))\\
&=&(g^\ast\om)\Big(\xi|_{g^{-1}(x)},\big((g^{-1})_\ast(\bullet|_x)\big)\big |_{g^{-1}(x)}\Big)
=(g^\ast\om)\Big(\xi|_{g^{-1}(x)},\big((g^{-1})_\ast(\bullet|_x)\big)\big |_{g^{-1}(x)}\Big)\nonumber\\
&=&d\theta_{\xi, g^\ast\om}\Big((g^{-1})_\ast(\bullet|_x)\big|_{g^{-1}(x)}\Big)
=d\Big((g^{-1})^\ast(\theta_{\xi, g^\ast\om})\Big)\big(\bullet|_x\big)
=d\Big(\theta_{\xi, g^\ast\om}\circ g^{-1}\Big)\Big |_x
\end{eqnarray}
from which we obtain an action
\begin{equation}
\begin{array}{ccccc}
\Diff_0(X)\times \fham(X,\om_X) & \xrightarrow{\hspace*{1.2cm}}  & \fham(X,\om_X) & \overset{\om^{-1}}{ \xrightarrow{\hspace*{1.2cm}} }&C^\infty(X) \\
(g,\xi) & \xmapsto{\hspace*{.6cm}}  & \Ad_g\xi:=g_\ast\big(\xi|_{g^{-1}(x)}\big)\big|_x  & \xmapsto{\hspace*{.6cm}}  & \Ad_g \theta_{\xi,\om}:=\theta_{\Ad_g\xi,\om}=(g^{-1})^\ast \big(\theta_{\xi,g^\ast \om}\big)
\end{array} .
\end{equation}
Clearly we have
$$\Ad_g \{\theta,\theta'\}_\om=\{\Ad_g\theta,\Ad_g\theta'\}_\om$$ as $\big(C_0^\infty(X),\{\bullet,\bullet\}_\om\big)\cong(\fham,[\bullet,\bullet])\subset(\fdiff,[\bullet,\bullet])$ is a {\red Lie subalgebra}.
As a consequence, we have
\begin{lemm}
With the notation introduced above, we have for $g\in \Diff(X)$
$$
\theta_{\Ad_g \xi,\om}=\theta_{\xi, g^\ast\om}\circ g^{-1} \ \text{ or equivalently }\ \ \theta_{\xi,g^\ast \om}=\theta_{\Ad_g \xi,\om}\circ g=g^\ast\theta_{g_\ast \xi,\om}.
$$
\end{lemm}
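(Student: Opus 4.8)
The plan is to read the identity off directly from the computation already carried out in \eqref{th-om}, and then to pin down the additive constant using the normalization built into the identification $\fham\cong\{\theta:\int_X\theta\,\om^n=0\}$ from Definition \ref{Th}.

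First I would observe that the chain of equalities in \eqref{th-om} is precisely the statement that
$$
\om\big(\Ad_g\xi|_x,\bullet|_x\big)\ =\ d\big(\theta_{\xi,g^\ast\om}\circ g^{-1}\big)\big|_x ,
$$
since $\Ad_g\xi=g_\ast(\xi|_{g^{-1}(x)})|_x$ by definition of the action. On the other hand, the defining property of the Hamiltonian $\theta_{\Ad_g\xi,\om}$ reads $\om(\Ad_g\xi,\cdot)=d\theta_{\Ad_g\xi,\om}(\cdot)$. Comparing the two gives $d\theta_{\Ad_g\xi,\om}=d(\theta_{\xi,g^\ast\om}\circ g^{-1})$, so the two functions differ by a constant.

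Next I would fix that constant. Both $\theta_{\Ad_g\xi,\om}$ and $\theta_{\xi,g^\ast\om}$ are normalized to have zero average against their respective volume forms, i.e. $\int_X\theta_{\Ad_g\xi,\om}\,\om^n=0$ and $\int_X\theta_{\xi,g^\ast\om}\,(g^\ast\om)^n=0$. Using $g^\ast(\om^n)=(g^\ast\om)^n$ together with the change-of-variables formula along $x\mapsto g^{-1}(x)$ (orientation-preserving, e.g. for $g$ in the identity component), one gets
$$
\int_X\big(\theta_{\xi,g^\ast\om}\circ g^{-1}\big)\,\om^n=\int_X\theta_{\xi,g^\ast\om}\,(g^\ast\om)^n=0=\int_X\theta_{\Ad_g\xi,\om}\,\om^n .
$$
Since two functions with equal differential and equal $\om^n$-average coincide, this yields $\theta_{\Ad_g\xi,\om}=\theta_{\xi,g^\ast\om}\circ g^{-1}$, the first asserted identity.

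Finally, the two equivalent formulations follow by formal substitution: composing with $g$ on the right gives $\theta_{\xi,g^\ast\om}=\theta_{\Ad_g\xi,\om}\circ g$, and since $\Ad_g\xi=g_\ast\xi$ and $g^\ast(\cdot)=(\cdot)\circ g$ one rewrites $\theta_{\Ad_g\xi,\om}\circ g=g^\ast\theta_{g_\ast\xi,\om}$. The only genuinely non-formal point is the vanishing of the constant, and the main obstacle there is matching the normalization of $\theta_{\xi,g^\ast\om}$ (taken against $(g^\ast\om)^n$) to that of $\theta_{\Ad_g\xi,\om}$ (taken against $\om^n$); this is exactly what the identity $g^\ast(\om^n)=(g^\ast\om)^n$ and the change-of-variables formula deliver, so everything beyond \eqref{th-om} is bookkeeping.
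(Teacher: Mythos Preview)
Your proof is correct and follows the same route as the paper: the lemma is stated there as an immediate consequence of the computation \eqref{th-om}, with no separate proof block. You are simply more explicit than the paper about pinning down the additive constant via the normalization $\int_X\theta\,\om^n=0$, which the paper leaves implicit.
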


\begin{defi}\label{ham-J}
Let $J\in \cJ_\Int(X,\om_X)$ and $\xi\in \faut(X,J)$. We define the complexified $\Ham(X,\om_X)$-orbit of $J$ in $\cJ_\Int(X,\om_X)$ to be:
$$
\Ham^\CC(X,\om_X)\cdot J:=\bigcup_{g_s}
\left\{J_s:=J^{g_s}=(g_s^{-1})_\ast|_{g_s(x)}\circ (J|_{g_s(x)})\circ g_{s\ast}(\bullet |_x)\right \}_s
\subset\cJ_\Int(X,\om_X)
$$
with
\begin{equation}\label{ham-C}
\Ham^\CC(X,\om_X):=\left\{ g_1\in \Diff_0(X)\left |
 \{g_s\}_{s\in [0,1]}\subset \Diff_0(X) \text{ satisfying: }
\left\{
\begin{array}{lll}
 \dfrac{dg_s}{ds}&=&L_{\stackbin[\in \fham+{\red J_s}\circ\fham]{}{\underbrace{v_{\phi_0}+{\red J_s} v_{\phi_1}}}} \\
 g_0&=&\mathrm{id.}
\end{array}
\right.
\right.
\right\}.
\end{equation}
where $v_{\phi_i}, i=1,2$ are the Hamiltonian vector fields for $\phi_i:$
$
\left\{
\begin{array}{lll}
d\phi_i&=&\om(v_{\phi_i},\bullet); \\
 \phi_i &\in& C^\infty(X,\RR)
\end{array}.
\right.
$
Thus for $s\in [0,1]$, we have
$$ \om_{s}:=g_s^\ast \om \text{ satisfying }\dot{\om}_s\in\im\ii\dd_{J_s}\dbar_{J_s} \text{ and  } L_{\xi_s} J_s=0 \text{  for } \xi_s:=(g_s^{-1})_\ast \xi. \ \
\footnote{
Since for any $v\in \Ga(TX)$, we have
\begin{eqnarray}\label{L-xi-J}
\ \  \ \ \ \ \ \ (L_{\xi_s} J_s)\big|_x (g_s^{-1})_\ast \big( v|_{g_s(x)}\big)
&=&L_{\xi_s} \Big(J_s (g_s^{-1})_\ast ( v|_{g_s(x)})\Big)- J_s\Big(L_{\xi_s} \big( (g_s^{-1})_\ast ( v|_{g_s(x)})\big)\Big)\\
&=&L_{\xi_s}\Big( (g_s^{-1})_\ast|_{g_s(x)}\circ (J|_{g_s(x)})\circ g_{s\ast}\big( (g^{-1}_s)_\ast(v|_{g_s(x)})\big |_x\big) \Big)\nonumber\\
&&-(g_s^{-1})_\ast|_{g_s(x)}\circ (J|_{g_s(x)})\circ g_{s\ast}\left (\Big(L_{\xi_s} (g_s^{-1})_\ast ( v|_{g_s(x)})\Big)\Big |_x\right )\nonumber\\
&=&L_{(g_s^{-1})_{\ast} \xi}\Big( (g_s^{-1})_\ast|_{g_s(x)}\circ \big(J|_{g_s(x)}\cdot v|_{g_s(x)}\big)\Big)\nonumber\\
&&-(g_s^{-1})_\ast|_{g_s(x)}\circ (J|_{g_s(x)})\circ g_{s\ast}\left ((g_s^{-1})_\ast\Big(\big (L_{\xi} v\big)\big |_{g_s(x)}\Big)\Big |_x\right )\nonumber\\
&=&(g_s^{-1})_\ast|_{g_s(x)}\Big(\big(L_\xi (J\cdot v)\big |_{g_s(x)}\big)\Big)-(g_s^{-1})_\ast|_{g_s(x)}\Big(J|_{g_s(x)}\circ (L_{\xi} v)\big |_{g_s(x)}\Big)\nonumber\\
&=&(g_s^{-1})_\ast|_{g_s(x)}\Big(\big((L_\xi J) v\big)\big|_{g_s(x)}\Big).\nonumber
\end{eqnarray}
}
$$
\end{defi}
\begin{rema}
Notice that the Definition of $\Ham^\CC(X,\om_X)\cdot J$ above agree with Donaldson's description in \cite{Donaldson1997} (cf. Appendix \ref{D-94} ).
\end{rema}

\begin{defi}\label{fs}
Let
$$\om_s:=\om+ \ii\dd_J\dbar_J\phi_s=\om+\frac{\ii}{2} d\circ \Big(d-\ii J\circ d\Big)\phi_s=\om+{\red  \frac{1}{2}d\circ J\circ d\phi_s}, \ s\in [0,1]$$
be a smooth family of K\"ahler form on $X$ parametrized by $s\in [0,1]$
and let $f_s^\ast\om_s= \om$ with $\{f_s\}_{s\in [0,1]}\subset \Diff(X)_0$ be the  family of diffeomorphism obtained via Moser's trick:
 that is, if we let $f_s$ obtained by integrating  the vector field $\displaystyle v_s(x):=\left.\frac{d}{dt}\right|_{t=s} f_t\big(f_{-s}(x)\big)$, defined via the following:

\begin{eqnarray}\label{v-s-C}
0 &=&\frac{d (f_s^\ast\om_s)}{ds}=f_s^\ast \big(L_{v_s} \om_s\big)+f_s^\ast (\dot\om_s)=f_s^\ast \big(L_{v_s} \om_s\big)+ f_s^\ast \left(\frac{1}{2}d\circ J \circ d\dot \phi_s\right) \nonumber\\
&\Longrightarrow &
  \red v_s:=-\frac{1}{2}Jv_{\dot\phi_s,\om_s}-v_{\psi_s,\om_s} \
\end{eqnarray}
with
\begin{eqnarray*}
-{\red \om_s(v_{\psi_s,\om_s},\bullet)
=-d\psi_s:}
&=&v_s\lrcorner\om_s+\frac{1}{2}J\circ d\dot\phi_s=v_s\lrcorner\om_s{\red -}\frac{1}{2}d\dot\phi_s (J\bullet)\\
   \big({\red \ J\circ d\dot\phi_s= d\dot\phi_s(J^{-1}\bullet)= -d\dot\phi_s(J\bullet)} \ \big)
&=&\om_s(v_s,\bullet)-\frac{1}{2}\om_s(v_{\dot\phi_s,\om_s},J\bullet)=\om_s(v_s,\bullet)+\frac{1}{2}\om_s(Jv_{\dot\phi_s,\om_s},\bullet)\nonumber
\end{eqnarray*}
\end{defi}

Let $\xi^\CC:=\xi-\ii J\xi\in \Ga(T^{1,0}_JX)$ (i.e. $J\xi^\CC=\ii\xi^\CC$ ) be a {\red holomorphic vector field} satisfying:
$$L_{\xi^\CC} J=L_{\xi-\ii J\xi}J{\red :=}L_{\xi-J\circ J\xi} J=L_{2\xi}J=0$$
 and  we define $\theta_{\xi^\CC,\om}\in \theta_{\xi^\CC,\om}\in \fham^\CC(X,\om_X)\subset C^\infty(X,\CC)$ 
 \footnote{In particular, we have $\xi\in \fham(X,\om)+ J\cdot \fham(X,\om)$.
 }
 to be function ( unique up to a constant ) solving:
 $$\red \om(\xi^\CC,\bullet)=\dbar_J\theta_{\xi^\CC,\om}(\bullet).$$
\begin{rema}\label{xi-10}
If $\xi\in \Ga(TX)$ is a real Hamiltonian vector field satisfying: $\om(\xi,\bullet)=d\theta_{\xi,\om}(\bullet)$,  then
\begin{eqnarray*}
\om(\xi^{1,0},\bullet)
&=&\om\Big(\frac{\xi-\ii J\xi}{2},\bullet\Big)=\frac{1}{2}\Big(d\theta_{\xi,\om} (\bullet)-\ii\om(J\xi,\bullet)\Big)\\
&=&\frac{1}{2}\Big(d\theta_{\xi,\om}(\bullet)+\ii\om(\xi,J\bullet)\Big)=\frac{1}{2}\Big(d\theta_{\xi,\om}(\bullet)+\ii d\theta_{\xi,\om}(J\bullet)\Big)\\
\Big( J\circ d\theta(\bullet)=d\theta (J^{-1}\bullet) \Big)
&=&\frac{1}{2}\Big(d {\red -}\ii J\circ d\Big)\theta_{\xi,\om}(\bullet)=\dbar_J\theta_{\xi,\om} (\bullet)\red :=\dbar_J\theta_{\xi^{1,0},\om} (\bullet).
\end{eqnarray*}
\end{rema}
{\blue
In order to fit the $\Ham(X,\om_X)$-action on $\cJ_\Int(X,\om_X)$ into the story we developed in Section \ref{moment-convex}, e.g. Corollary \ref{chevalley},  we need a form of Chevalley isomorphism or a natural extension of $\Ad_{\Ham(X,\om_X)}$-invariant function $\fham(X,\om_X)\to \RR$ to a $\Ad_{\Ham^\CC(X,\om_X)}$-invariant function $\fham^\CC(X,\om_X)\to \CC$.  

Unfortunately, it is not possible to have such a statement. Even though we have defined $\Ham^\CC(X,\om_X)<\Diff(X)$, it is {\red NOT} a group complexifying the group $\Ham(X,\om_X)$ in the usual sense.  Luckily,   we do have a weaker version of Chevalley type result taking the form in Lemma \ref{Ad-th} combined which with Example \ref{trace} gives precisely the Tian-Zhu's generalized Futaki invariant.
}
\begin{lemm}\label{Ad-th}
Fix  
 a $J\in \cJ_\Int(X,\om=\om_X)$, let  $\xi^\CC=\xi_1+\ii\xi_2\in \fham^\CC(X,\om)$ be a {\red $J$-holomorphic vector field} satisfying $\red \om_s(\xi^\CC,\bullet)=\dbar_J\theta_{\xi^\CC,\om_s}(\bullet)$ with $\om_s=\om+ \ii\dd_J\dbar_J\phi_s $ and $\theta_{\xi^\CC,\om_s}$ be the unique $\CC$-valued function normalized via:
\begin{equation}\label{k}
\int_X\theta_{\xi^\CC,\om_s}\om_s^n=0.
\end{equation}
Let $\{f_s:=g_s^{-1}\}\subset \Diff(X)$ be defined in Definition \ref{fs}.  Then
\begin{enumerate}
\item $\theta_{\xi^\CC,\om_s}=\theta_{\xi^\CC,\om}+\ii (\xi^\CC\phi_s)$ 
\item
$$\frac{d}{ds}\Big(\stackbin[\red f_s^\ast\theta_{\xi^\CC,\om_s}]{}{\underbrace{(g_s^{-1})^\ast\theta_{\xi^\CC,\om_s}}}\Big)
=- \left\{f_s^\ast\theta_{\xi^\CC,\om_s}, f_s^\ast \left(\psi_s+\frac{\ii}{2}\dot\phi_s\right)\right\}_{\red f_s^\ast\om_s=\om}.
$$
\end{enumerate}
In particular, we have $f_s^\ast\theta_{\xi^\CC,\om_s}\in \fham(X,\om_X)$, i.e. $\displaystyle \int_X\big(f_s^\ast\theta_{\xi^\CC,\om_s}\big)\om_X^n=0$.
\end{lemm}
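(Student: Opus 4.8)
The plan is to establish the two displayed identities in order, after which the normalization $\int_X (f_s^\ast\theta_{\xi^\CC,\om_s})\om_X^n=0$ drops out almost for free. Throughout I write $\theta_s:=\theta_{\xi^\CC,\om_s}$ and use the defining relation $\om_s(\xi^\CC,\bullet)=\dbar_J\theta_s$ together with $\dot\om_s=\ii\dd_J\dbar_J\dot\phi_s$ from Definition \ref{fs}. For Part (1), subtracting the defining relations for $\om_s$ and $\om$ gives $\dbar_J(\theta_s-\theta_{\xi^\CC,\om})=\iota_{\xi^\CC}(\ii\dd_J\dbar_J\phi_s)$. Since $\xi^\CC$ is of type $(1,0)$ we have $\iota_{\xi^\CC}\dbar_J\phi_s=0$, so $L_{\xi^\CC}(\dbar_J\phi_s)=\iota_{\xi^\CC}(\dd_J\dbar_J\phi_s)$; and because $L_{\xi^\CC}$ commutes with $\dbar_J$ by holomorphicity ($L_{\xi^\CC}J=0$) this also equals $\dbar_J(\xi^\CC\phi_s)$. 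Hence $\iota_{\xi^\CC}(\ii\dd_J\dbar_J\phi_s)=\dbar_J(\ii\,\xi^\CC\phi_s)$, so $\theta_s-\theta_{\xi^\CC,\om}-\ii\,\xi^\CC\phi_s$ is $\dbar_J$-closed, hence a constant $c_s$ on compact $X$. To see $c_s=0$ I would show $I(s):=\int_X(\theta_{\xi^\CC,\om}+\ii\,\xi^\CC\phi_s)\om_s^n$ is $s$-independent: differentiating and moving $\ii\dd_J\dbar_J$ off $\dot\phi_s$ onto $\tilde\theta_s:=\theta_{\xi^\CC,\om}+\ii\,\xi^\CC\phi_s$ (self-adjointness against the closed form $\om_s^{n-1}$), then using $L_{\xi^\CC}\om_s=\dd_J\dbar_J\tilde\theta_s$ and $d\dot\phi_s\wedge\iota_{\xi^\CC}\om_s^n=(\xi^\CC\dot\phi_s)\om_s^n$, the two terms of $I'(s)$ cancel. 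Since $I(0)=\int_X\theta_{\xi^\CC,\om}\om^n=0$ by \eqref{k} at $s=0$ and $\int_X\theta_s\om_s^n=0$ forces $I(s)=c_s\int_X\om_s^n$, constancy of $I$ yields $c_s=0$.

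For Part (2), write $f_s=g_s^{-1}$ and let $v_s=\dot f_s\circ f_s^{-1}$ be Moser's vector field, so that $\tfrac{d}{ds}(f_s^\ast\theta_s)=f_s^\ast(\dot\theta_s+L_{v_s}\theta_s)=f_s^\ast\big(\ii\,\xi^\CC\dot\phi_s+d\theta_s(v_s)\big)$, where $\dot\theta_s=\ii\,\xi^\CC\dot\phi_s$ is supplied by Part (1). By naturality of the bracket under $f_s^\ast\om_s=\om$, namely $f_s^\ast\{A,B\}_{\om_s}=\{f_s^\ast A,f_s^\ast B\}_{\om}$, it suffices to verify the pointwise identity on $(X,\om_s)$
$$\ii\,\xi^\CC\dot\phi_s+d\theta_s(v_s)=-\{\theta_s,\psi_s+\tfrac{\ii}{2}\dot\phi_s\}_{\om_s}.$$
Substituting $v_s=-\tfrac12 Jv_{\dot\phi_s,\om_s}-v_{\psi_s,\om_s}$ from Definition \ref{fs}, the $v_{\psi_s,\om_s}$ contribution is exactly $-\{\theta_s,\psi_s\}_{\om_s}$. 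For the remaining piece, setting $w:=v_{\dot\phi_s,\om_s}$ I would use $Jw-\ii w=-2\ii\,w^{(0,1)}$, the type vanishing $(\dd_J\theta_s)(w^{(0,1)})=0$, the defining relation $\dbar_J\theta_s=\om_s(\xi^\CC,\bullet)$, and $\iota_{w^{(0,1)}}\om_s=\dd_J\dot\phi_s$ (the $(1,0)$-part of $\iota_w\om_s=d\dot\phi_s$) to obtain $d\theta_s(w^{(0,1)})=\om_s(\xi^\CC,w^{(0,1)})=-\xi^\CC\dot\phi_s$. This converts $\ii\,\xi^\CC\dot\phi_s-\tfrac12 d\theta_s(Jw)$ into $-\tfrac{\ii}{2}d\theta_s(w)=-\tfrac{\ii}{2}\{\theta_s,\dot\phi_s\}_{\om_s}$, matching the right-hand side.

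The final normalization is then immediate: since $f_s^\ast\om_s=\om$ we have $\om^n=f_s^\ast(\om_s^n)$, so by change of variables $\int_X(f_s^\ast\theta_s)\om^n=\int_X f_s^\ast(\theta_s\om_s^n)=\int_X\theta_s\om_s^n=0$ by \eqref{k}; equivalently, the right-hand side of Part (2) integrates to zero because the integral of a Poisson bracket against $\om^n$ vanishes, so $\int_X(f_s^\ast\theta_s)\,\om^n$ is $s$-independent and equal to its value $0$ at $s=0$. I expect the genuine obstacle to be purely bookkeeping rather than conceptual: pinning down the constant $c_s$ in Part (1) requires the integration-by-parts cancellation above, and in Part (2) the delicate point is matching the precise factors, namely the $\tfrac12$, the conversion of $J$ into $\ii$ through the $(0,1)$-projection, and the sign convention for $\{\bullet,\bullet\}_{\om_s}$ fixed by Definition \ref{Th}. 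The substantive input is only the holomorphicity of $\xi^\CC$ (so that $L_{\xi^\CC}$ commutes with $\dbar_J$) together with Moser's identity $\tfrac{d}{ds}(f_s^\ast\alpha)=f_s^\ast(\dot\alpha+L_{v_s}\alpha)$.
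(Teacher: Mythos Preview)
Your proposal is correct and follows essentially the same route as the paper. For Part~(1) you subtract the defining relations and reduce to $\dbar_J(\theta_s-\theta_{\xi^\CC,\om}-\ii\,\xi^\CC\phi_s)=0$ via type considerations, then kill the constant by showing $\int_X(\theta_{\xi^\CC,\om}+\ii\,\xi^\CC\phi_s)\,\om_s^n$ is $s$-independent; the paper does the same cancellation, only packaging the first step through an explicit computation of $\om_s(\xi^\CC,\bullet)$ using the auxiliary operator $\bL_{\xi^\CC}J=0$. For Part~(2) both arguments start from the Moser identity $\tfrac{d}{ds}(f_s^\ast\theta_s)=f_s^\ast(\dot\theta_s+L_{v_s}\theta_s)$, substitute $v_s=-\tfrac12 Jv_{\dot\phi_s,\om_s}-v_{\psi_s,\om_s}$, and isolate the $\psi_s$-bracket; your handling of the remaining $Jw$-term via $Jw-\ii w=-2\ii\,w^{(0,1)}$ and the type vanishing $\dd_J\theta_s(w^{(0,1)})=0$ is a cleaner repackaging of the paper's identity \eqref{d-d}, but the content is identical.
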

\begin{proof}
Let us introduce
\begin{eqnarray}
\bL_{\xi^\CC} J:&=&(d \circ \xi^\CC \lrcorner+\xi^\CC \lrcorner\circ d)\circ J
=\Big( d \circ \xi \lrcorner+\xi \lrcorner\circ d+\ii (d \circ (J\xi) \lrcorner+ (J\xi) \lrcorner\circ d)\Big)\circ J\\
&=&L_\xi J+\ii L_{J\xi}J
= L_\xi J-\ii \big(L_{\xi}J\big)\circ J=0\nonumber
\end{eqnarray}
where the last identity follows from \cite[Claim on page 36]{Tian2000} and our assumption $L_\xi J=0$. Moreover, one has 
$$\bL_{\xi^\CC}\circ d=d\circ \bL_{\xi^\CC}$$

\begin{eqnarray}\label{om-s-C}
\om_s(\xi^\CC,\bullet)
&=& \dbar_J\theta_{\xi^\CC,\om}+\frac{1}{2}\xi^\CC\lrcorner\circ d\circ (J\circ d\phi_s)\nonumber\\
&=& \dbar_J\theta_{\xi^\CC,\om}-\frac{1}{2}d \circ \xi^\CC \lrcorner\circ (J\circ d \phi_s)+\frac{1}{2}L_{\xi^\CC} (J\circ d\phi_s)\nonumber\\
\Big( J\circ d\phi_s(\bullet)=d\phi_s (J^{-1}\bullet) \Big)
&=& \dbar_J\theta_{\xi^\CC,\om}+\frac{1}{2}\Big(-d\big(d \phi_s(-J\xi^\CC)\big)+ ({\red \bL_{\xi^\CC} J})\circ d\phi_s+J\circ L_{\xi^\CC} (d\phi_s)\Big)\nonumber\\
(J\xi^\CC=\ii\xi^\CC;\  \bL_{\xi^\CC}\circ d=d\circ \bL_{\xi^\CC})
&=& \dbar_J\theta_{\xi^\CC,\om}+\frac{1}{2}\Big(\ii d\big(d \phi_s(\xi^\CC)\big)+ ({\red \bL_{\xi^\CC} J})\circ d\phi_s+J\circ  d(L_{\xi^\CC}\phi_s)\Big)\nonumber\\
&=& \dbar_J\theta_{\xi^\CC,\om}+\frac{\ii}{2}\stackbin[\red =2\dbar_J]{}{\underbrace{\Big(d-\ii J\circ d\Big)}}\big(\xi^\CC \phi_s\big)+ \frac{1}{2}({\red \bL_{\xi^\CC} J})\circ d\phi_s\nonumber\\
&=& \dbar_J\Big(\theta_{\xi^\CC,\om}+\ii(\xi^\CC \phi_s)\Big)+\frac{1}{2}\stackbin[\red =0]{}{\underbrace{(\bL_{\xi^\CC} J)}}\circ d\phi_s
\end{eqnarray}
The first part follows from the fact:
 
 \begin{eqnarray*}
  &&\frac{d}{ds}\int_X\Big(\theta_{\xi^\CC,\om}+\ii (\xi^\CC\phi_s)\Big)\om_s^n\\
 &=& \int_Xn\cdot \Big(\theta_{\xi^\CC,\om}+\ii (\xi^\CC\phi_s)\Big)\cdot \ii\dd_J\dbar_J\dot\phi_s\wedge\om_s^{n-1}+\ii(\xi^\CC\dot\phi_s)\om_s^n\\
 &=&\ii  \int_X \big(n\cdot\dbar_J\Big(\theta_{\xi^\CC,\om}+\ii (\xi^\CC\phi_s)\Big)\wedge\dd_J\dot\phi_s\wedge\om_s^{n-1}+(\xi^\CC\dot\phi_s)\om_s^n\big)\\
 &\stackrel{\eqref{om-s-C}}{=}&\ii  \int_X \big(-\dd_J\dot\phi\wedge\big(\xi^\CC\lrcorner\om_s^{n}\big)+(\xi^\CC\dot\phi_s)\om_s^n\big)
 =\ii \int_X \left(-\big(\xi^\CC\lrcorner\dd_J\dot\phi_s\big)\cdot\om_s^{n}+(\xi^\CC\dot\phi_s)\om_s^n\right)=0.\\
 \end{eqnarray*}
Now for the second part, we have
\begin{eqnarray}\label{theta-inv-C}
&&\frac{d}{ds}\Big( f_s^\ast\theta_{\xi^\CC,\om_s}\Big)\nonumber\\
&\stackrel{}{=}& f_s^\ast\Big(L_{\red v_s}\theta_{\xi^\CC,\om_s}\Big)+f_s^\ast\left( \frac{d\theta_{\xi^\CC,\om_s}}{ds}\right)\nonumber\\
&\stackrel{\eqref{v-s-C}}{=}& -f_s^\ast\Big(L_{\red  \frac{1}{2}J v_{ \dot\phi_s,\om_s}+v_{\psi,\om_s}}\theta_{\xi^\CC,\om_s}\Big)+f_s^\ast\left( \frac{d\theta_{\xi^\CC,\om_s}}{ds}\right)
\nonumber\\
&\stackrel{\eqref{om-s-C}}{=}&
-f_s^\ast\Big(L_{ \red \frac{1}{2}J v_{\red\dot\phi_s,\om_s}+v_{\psi,\om_s}}\theta_{\xi^\CC,\om_s}\Big)+{\red \ii} f_s^\ast\big( \xi^\CC(\dot\phi_s)\big)\nonumber\\
&=&-f_s^\ast\Big(d\theta_{\xi^\CC,\om_s}\left({\red \frac{1}{2}J v_{\dot\phi_s,\om_s}+v_{\psi_s,\om_s}}\right)\Big)+f_s^\ast\left(d\dot\phi_s(J\xi^\CC)\right)\nonumber\\
&=&-\frac{1}{2}f_s^\ast\Big((\dd_J\theta_{\xi^\CC,\om_s}+\dbar_J\theta_{\xi^\CC,\om_s})({\red J v_{\dot\phi_s,\om_s}})\Big)+f_s^\ast\big( (J\xi^\CC)(\dot\phi_s)\big)
-f_s^\ast\big( d\theta_{\xi^\CC,\om_s}( v_{\psi_s,\om_s})\big)\nonumber\\
\label{theta-inv-2}
&\stackrel{\eqref{d-d}}{=}&- \left\{f_s^\ast\theta_{\xi^\CC,\om_s}, f_s^\ast \left(\psi_s+\frac{\ii}{2}\dot\phi_s\right)\right\}_{\red f_s^\ast\om_s=\om}
\end{eqnarray}
thanks to the following:
\begin{eqnarray}\label{d-d}
&&-(\dd_J\theta_{\xi^\CC,\om_s}+\dbar_J\theta_{\xi^\CC,\om_s})\big({\red J v_{\dot\phi_s,\om_s}}\big)+2 (J\xi^\CC)(\dot\phi_s)\nonumber\\
&=&-\Big(\dd_J\theta_{\xi^\CC,\om_s}+\dbar_J\theta_{\xi^\CC,\om_s}\Big)\big({\red J v_{\dot\phi_s,\om_s}}\big)+2 d\dot\phi_s(J\xi^\CC)\nonumber\\
&=&-\Big(\dd_J\theta_{\xi^\CC,\om_s}+\dbar_J\theta_{\xi^\CC,\om_s}\Big)\big({\red J v_{\dot\phi_s,\om_s}}\big)+2 \om_s(\xi^\CC,Jv_{\dot\phi_s,\om_s})
\nonumber\\
&=&-\Big(\dd_J\theta_{\xi^\CC,\om_s}{\red -}\dbar_J\theta_{\xi^\CC,\om_s}\Big)\big({\red J v_{\dot\phi_s,\om_s}}\big)\nonumber\\
\Big((\dd_J-\dbar_J)=\ii J\circ d\Big)
&=&-\ii J\circ d\theta_{\xi^\CC,\om_s}\big({\red J v_{\dot\phi_s,\om_s}}\big)=-\ii d\theta_{\xi^\CC,\om_s}\big({\red  v_{\dot\phi_s,\om_s}}\big)\nonumber\\
&=&-\ii\{\theta_{\xi^\CC,\om_s},\dot\phi_s\}_{\om_s}.
\end{eqnarray}
\end{proof}
\begin{rema}
If $\xi^\CC=\xi^{1,0}$ with $\xi$ being a {\red Killing vector field} than we have 
$\theta_{\xi^{1,0},\om_s}=\theta_{\xi,\om_s}$ thanks to Remark \ref{xi-10} and $\{\theta_{\xi,\om_s},\dot\phi_s\}_{\om_s}=\xi(\dot\phi_s)=0$.
This together with \eqref{th-om} and the fact:
$d\Big( (g_s^{-1})^\ast \theta_{\xi,g_s^\ast\om}\Big)(\bullet|_x)=\om\Big( g_{s\ast}\big(\xi|_{g_s^{-1}(x)}\big),\bullet|_x\Big)$
imply
\begin{eqnarray*}
\frac{d \Big( g_{s\ast}\big(\xi|_{g_s^{-1}(x)}\big)\Big)}{ds}
&=& g_{s\ast}\Big( L_{Jv_{\dot\phi_s,\om_s}}\xi\Big)=g_{s\ast}\Big(-L_\xi \big(J\cdot v_{\dot\phi_s,\om_s}\big)\Big)
=g_{s\ast}\Big(-(L_\xi J)\cdot v_{\dot\phi_s,\om_s}-J\cdot\big( L_\xi  v_{\dot\phi_s,\om_s}\big)\Big)\\
({\red \because  L_\xi J=0}\ )
&=&g_{s\ast}\Big(-J\big(L_\xi v_{\dot\phi_s,\om_s}\big)\Big)=g_{s\ast}\Big(-J v_{\xi(\dot\phi_s),\om_s}\Big)=0. \ \   \big( {\red \because \xi(\phi_s)=0}\big )
\end{eqnarray*}

\end{rema}

\begin{coro}\label{Fut-F}
Fix a  $ F(t)\in \CC[\![t]\!]$ or $C^\infty(\CC)$, let $\{f_s=g_s^{-1}\}_s<\Diff_0(X)$, $J\in \cJ_\Int(X,\om=\om_X)$, $\xi^\CC,\eta^\CC\in H^0(T^{1,0}_JX)$, i.e. $L_{\xi^\CC} J=0, L_{\eta^\CC} J=0$ and $\theta_{\xi^\CC,\om_s},\theta_{\eta^\CC,\om_s}$ defined in Lemma \ref{Ad-th}.
 Then 
 \begin{enumerate}
 \item
$\displaystyle \int_X F(\theta_{\xi^\CC,\om_s})\om_s^n$ is {\red independent} of $s$.
 \item 
If we write
$
 \theta_{\bullet,s}:= f_s^\ast\theta_{\bullet,\om_s}=\stackbin[\red f_s^\ast\theta_{\bullet,\om_s}]{}{\underbrace{(g_s^{-1})^\ast\theta_{\bullet,\om_s}}}.
$
Then
$$
\int_X \left.\frac{d}{d\ep}\right|_{\ep=0} F(\theta_{\xi^\CC+\ep \eta^\CC,s})\om_s^n=\int_X \theta_{\eta^\CC,s}\cdot F'(\theta_{\xi^\CC,s})\om_s^n.
$$
is independent of $s$.
 \end{enumerate}
\end{coro}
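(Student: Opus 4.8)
The plan is to derive both parts from Lemma~\ref{Ad-th}, which has been engineered precisely so that the $s$-derivative of the pulled-back Hamiltonian is a Poisson bracket. The guiding idea is to use the Moser diffeomorphisms $f_s$ (satisfying $f_s^\ast\om_s=\om$) to transport everything to the \emph{fixed} symplectic form $\om$, where the $s$-variation becomes an infinitesimal Hamiltonian motion and hence integrates away over the closed manifold $X$.

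For part (1), I would first change variables by $f_s$. Since $f_s\in\Diff_0(X)$ is orientation preserving, $f_s^\ast\om_s=\om$, and $F$ is applied pointwise,
$$\int_X F(\theta_{\xi^\CC,\om_s})\,\om_s^n=\int_X f_s^\ast\big(F(\theta_{\xi^\CC,\om_s})\,\om_s^n\big)=\int_X F(\theta_{\xi^\CC,s})\,\om^n,$$
with $\theta_{\xi^\CC,s}=f_s^\ast\theta_{\xi^\CC,\om_s}$ as in the statement. Now the only $s$-dependence sits in $\theta_{\xi^\CC,s}$, so differentiating under the integral sign and invoking Lemma~\ref{Ad-th}(2) gives
$$\frac{d}{ds}\int_X F(\theta_{\xi^\CC,s})\,\om^n=\int_X F'(\theta_{\xi^\CC,s})\,\frac{d}{ds}\theta_{\xi^\CC,s}\,\om^n=-\int_X F'(\theta_{\xi^\CC,s})\,\big\{\theta_{\xi^\CC,s},\,\rho_s\big\}_\om\,\om^n,$$
where $\rho_s:=f_s^\ast(\psi_s+\tfrac{\ii}{2}\dot\phi_s)$. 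The two remaining steps are the chain rule for the Poisson bracket, $F'(h)\{h,\rho\}_\om=\{F(h),\rho\}_\om$ (a consequence of $v_{F(h)}=F'(h)v_h$), and the vanishing $\int_X\{a,\rho\}_\om\,\om^n=0$. The latter holds because $\{a,\rho\}_\om\,\om^n=\pm L_{v_a}(a\,\om^n)$ is exact: using $L_{v_a}\om^n=0$ (as $v_a$ is Hamiltonian) and Cartan's formula on the top-degree form $a\,\om^n$ one gets $L_{v_a}(a\,\om^n)=d\,\iota_{v_a}(a\,\om^n)$, so Stokes on the closed $X$ kills the integral. Combining these yields $\frac{d}{ds}\int_X F(\theta_{\xi^\CC,s})\,\om^n=0$.

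For part (2), I would deduce it from part (1) by polarizing in an auxiliary parameter $\ep$. Since $L_{\xi^\CC}J=L_{\eta^\CC}J=0$, the combination $\xi^\CC+\ep\eta^\CC$ is again a $J$-holomorphic vector field, so part (1) applies to it and $I(\ep,s):=\int_X F(\theta_{\xi^\CC+\ep\eta^\CC,\om_s})\,\om_s^n$ is independent of $s$ for every fixed $\ep$; hence so is $\partial_\ep I(\ep,s)\big|_{\ep=0}$. Because the defining relation $\om_s(\cdot,\cdot)=\dbar_J\theta_{\cdot,\om_s}$ and the normalization \eqref{k} are both linear in the vector field, one has $\theta_{\xi^\CC+\ep\eta^\CC,\om_s}=\theta_{\xi^\CC,\om_s}+\ep\,\theta_{\eta^\CC,\om_s}$ — the undetermined additive constant is pinned to $0$ by \eqref{k} — and likewise after applying $f_s^\ast$. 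The chain rule then gives $\frac{d}{d\ep}\big|_{\ep=0}F(\theta_{\xi^\CC+\ep\eta^\CC,s})=F'(\theta_{\xi^\CC,s})\,\theta_{\eta^\CC,s}$, which is exactly the integrand identity asserted in part (2); together with the $s$-independence of $\partial_\ep I|_{\ep=0}$ (and the change of variables above relating $\om_s^n$-integrals to $\om^n$-integrals) this finishes the proof.

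I expect the main obstacle to be the bookkeeping in part (1): one must apply $f_s^\ast$ consistently so that Lemma~\ref{Ad-th}(2) produces the Poisson bracket with respect to the \emph{fixed} $\om$ rather than $\om_s$, and then verify cleanly that this bracket integrates to zero. Everything else — the linearity of $\theta$ in the vector field, the chain rule for $F$, and the reduction of (2) to (1) — is formal.
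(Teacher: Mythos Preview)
Your proof is correct. Part (1) is essentially the paper's argument verbatim: pull back by $f_s$ to the fixed $\om$, apply Lemma~\ref{Ad-th}(2), use the chain rule $F'(h)\{h,\rho\}_\om=\{F(h),\rho\}_\om$, and observe that the integral of a Poisson bracket against the Liouville form vanishes.

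For Part (2) you take a genuinely different route from the paper. The paper argues directly on the product $\theta_{\eta^\CC,s}\,F'(\theta_{\xi^\CC,s})$: applying Lemma~\ref{Ad-th}(2) to each factor and using that $\{\,\cdot\,,\vphi^\CC\}_\om$ is a derivation (Leibniz rule for the Poisson bracket) gives
\[
\frac{d}{ds}\Big(\theta_{\eta^\CC,s}\,F'(\theta_{\xi^\CC,s})\Big)=\big\{\vphi^\CC,\ \theta_{\eta^\CC,s}\,F'(\theta_{\xi^\CC,s})\big\}_\om,
\]
which again integrates to zero against $\om^n$. Your polarization argument --- applying Part (1) to $\xi^\CC+\ep\eta^\CC$ and differentiating at $\ep=0$ using the linearity $\theta_{\xi^\CC+\ep\eta^\CC,\om_s}=\theta_{\xi^\CC,\om_s}+\ep\,\theta_{\eta^\CC,\om_s}$ --- is equally valid and arguably cleaner, since it reduces (2) to (1) without a separate Leibniz computation. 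The paper's approach has the minor advantage of making the derivation property explicit, which connects to the general framework of Example~\ref{trace}.

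One small slip: in your justification that $\int_X\{a,\rho\}_\om\,\om^n=0$ you wrote $L_{v_a}(a\,\om^n)$; you mean $L_{v_\rho}(a\,\om^n)$ (or $-L_{v_a}(\rho\,\om^n)$). The conclusion is unaffected.
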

\begin{proof}


Let $\vphi_s^\CC:=\psi_s+\frac{\ii}{2}\dot\phi_s$, then the first part follows:
\begin{eqnarray}\label{dot-C-0}
&&\frac{d}{ds}\left(\int_X F(\theta_{\xi^\CC,\om_s})\om_s^n\right)=\frac{d}{ds}\left(\int_X F(\theta_{\xi^\CC,\om_s})g^\ast_s\om^n
\right)=\frac{d}{ds}\left(\int_X F((g_s^{-1})^\ast\theta_{\xi^\CC,\om_s})\om^n\right)\\
&=&\!\!\int_X F'(f_s^\ast\theta_{\xi^\CC,\om_s})\left(- \left\{f_s^\ast\theta_{\xi^\CC,\om_s}, f_s^\ast \left(\varphi_s^\CC\right)\right\}_{\red f_s^\ast\om_s=\om} \right)\om^n\nonumber\\
&=&\!\!\int_X \left\{ F\big (f_s^\ast\theta_{\xi^\CC,\om_s}\big), f_s^\ast \left(\varphi_s^\CC\right)\right\}_{\red f_s^\ast\om_s=\om}
\!\!\!\!\!\!\!\om^n
= -\stackbin[\red=0]{}{\underbrace{\int_X \left( dF(\theta_{\xi^\CC,\om_s})\wedge d\left(\varphi^\CC_s\right)\right)\om^n}}=0.\nonumber
\end{eqnarray}

and the second part follows from the fact:
 \begin{eqnarray*}
 \frac{d}{ds}\left(\theta_{\eta^\CC,s} F'(\theta_{\xi^\CC,s})\right)
 &=&\frac{d\theta_{\eta^\CC,s}}{ds}F'(\theta_{\xi^\CC,s})+\theta_{\eta^\CC,s} \frac{d F'(\theta_{\xi^\CC,s})}{ds}\\
 &=&\left\{\vphi^\CC, \theta_{\eta^\CC,s}\right\}F'(\theta_{\xi^\CC,s})+\theta_{\eta^\CC,s} \left\{\vphi^\CC, F'(\theta_{\xi^\CC,s})\right\}\\
 &=&\left\{\vphi^\CC,\theta_{\eta^\CC,s}\cdot F'(\theta_{\xi^\CC,s})\right\}.
\end{eqnarray*}

\end{proof}


\begin{exam}
Consider the Example \ref{F-check}, with $F(t)=e^t-t$ then we have
\begin{eqnarray}\label{exp-x}
\left.\frac{d}{d\ep}\right|_{\ep=0} F(\theta_{\xi^\CC+\ep \eta^\CC,s})=\frac{d}{d\xi^\CC}\left(e^{\theta_{\xi^\CC+\ep\eta^\CC,s}}-\theta_{\xi^\CC+\ep\eta^\CC,s}\right)=\left(\theta_{\eta^\CC,s}\big(e^{\theta_{\xi^\CC,s}}-1\big)\right).
\end{eqnarray}
\end{exam}


\subsection{ Functional on $\fham(X,\om_X)$.}
In this section, we study an $\infty$-dimensional example of Section \ref{moment-convex}. We introduce a special type of $\Ad_{\Ham(X,\om_X)}$-invariant functional on $\fham(X,\om_X)$.

\begin{lemm}\label{bD-bD'}    
Let $F(x)\in \RR[\![x]\!]$ and
we define functional
 \begin{equation}\label{D-fcn}
\bD=\bD_F:
\begin{array}{cccc}
\mathfrak{ham}(X,\om_X)^\ast=\wedge^{2n}_X & \overset{}{ \xrightarrow{\hspace*{1.cm}}} & \RR  \ \\
  u\cdot \dfrac{\om^n}{n!}& \longmapsto &\displaystyle\int_X F(u)\dfrac{\om^n}{n!} \\
\end{array}.
\end{equation}
Then 
\begin{equation}
d \bD\big|_{u\om^n/n!}:
\begin{array}{ccccc}
\fham(X,\om_X)^\ast=\wedge^{2n}_X & \xrightarrow{\hspace*{1.6cm}}  & \fham(X,\om_X) \\
 u\cdot\dfrac{\om_X^n}{n!}& \xmapsto{\hspace*{1.0cm}}  & F'(u)-\displaystyle\frac{1}{V}\int_X F'(u)\frac{\om_X^n}{n!}&
\end{array}
\end{equation}
\begin{equation}
\nabla^2 \bD\big|_{u\om^n/n!}:
\begin{array}{ccccc}
\fham(X,\om_X)^\ast=\wedge^{2n}_X & \xrightarrow{\hspace*{1.6cm}}  & \fham(X,\om_X) \\
(\de u)\cdot\dfrac{\om^n}{n!} & \xmapsto{\hspace*{1.0cm}}  & F''(u)\cdot \de u-\displaystyle\frac{1}{V}\int_XF''(u)\cdot \de u\frac{\om_X^n}{n!}&
\end{array}
\end{equation}
where we use identification $\fham(X,\om_X)^*=\fham(X,\om_X)\cdot \dfrac{\om^n_X}{n!}$.
\end{lemm}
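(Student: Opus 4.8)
The plan is to establish both formulas by directly computing the first and second variations of $\bD_F$ and then reading off the canonical representative in $\fham(X,\om_X)$ dictated by the identification $\fham(X,\om_X)^\ast=\fham(X,\om_X)\cdot\frac{\om^n}{n!}$. Recall that under this identification a cotangent vector is a \emph{mean-zero} density $u\cdot\frac{\om^n}{n!}$ (so $\int_X u\,\frac{\om^n}{n!}=0$), the tangent space to $\fham(X,\om_X)^\ast$ at such a point is again the space of mean-zero densities $\delta u\cdot\frac{\om^n}{n!}$, and the natural pairing of $\theta\in\fham(X,\om_X)$ with $v\cdot\frac{\om^n}{n!}\in\fham(X,\om_X)^\ast$ is $\la v\tfrac{\om^n}{n!},\theta\ra=\int_X \theta\, v\,\frac{\om^n}{n!}$. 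Throughout write $V:=\int_X\frac{\om^n}{n!}$, and interpret $F\in\RR[\![x]\!]$ termwise (or under a smoothness hypothesis on $F$ ensuring $F(u),F'(u),F''(u)\in C^\infty(X)$).

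First I would compute, for a mean-zero variation $\delta u$,
\begin{equation*}
d\bD\big|_{u\om^n/n!}\Big(\delta u\cdot\tfrac{\om^n}{n!}\Big)=\frac{d}{dt}\bigg|_{t=0}\int_X F(u+t\,\delta u)\,\frac{\om^n}{n!}=\int_X F'(u)\,\delta u\,\frac{\om^n}{n!}.
\end{equation*}
To exhibit $d\bD|_{u\om^n/n!}$ as an element of $\fham(X,\om_X)$, i.e. as a mean-zero function, I use that $\delta u$ is itself mean-zero, so the value above is unchanged if $F'(u)$ is replaced by $F'(u)-c$ for any constant $c$. Choosing $c=\frac1V\int_X F'(u)\,\frac{\om^n}{n!}$ makes the representative mean-zero, which yields exactly
\begin{equation*}
d\bD\big|_{u\om^n/n!}=F'(u)-\frac1V\int_X F'(u)\,\frac{\om^n}{n!}\in\fham(X,\om_X).
\end{equation*}

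For the Hessian I would differentiate the gradient once more along $\delta u$, treating $\nabla^2\bD$ as the derivative of the map $u\cdot\frac{\om^n}{n!}\mapsto d\bD|_{u\om^n/n!}$ from $\fham(X,\om_X)^\ast$ to $\fham(X,\om_X)$:
\begin{equation*}
\nabla^2\bD\big|_{u\om^n/n!}\Big(\delta u\cdot\tfrac{\om^n}{n!}\Big)=\frac{d}{dt}\bigg|_{t=0}\left(F'(u+t\,\delta u)-\frac1V\int_X F'(u+t\,\delta u)\,\frac{\om^n}{n!}\right)=F''(u)\,\delta u-\frac1V\int_X F''(u)\,\delta u\,\frac{\om^n}{n!},
\end{equation*}
which is the asserted formula. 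Equivalently, one computes the symmetric bilinear form $(\delta u,\delta' u)\mapsto\int_X F''(u)\,\delta u\,\delta' u\,\frac{\om^n}{n!}$ arising as the second variation and subtracts the average in the first slot to land in the mean-zero space $\fham(X,\om_X)$.

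The computations are elementary once the bookkeeping is fixed, so the only real point of care, and the step I expect to be the main obstacle, is the normalization: one must verify that the subtracted average $\frac1V\int_X F'(u)\frac{\om^n}{n!}$ (resp. $\frac1V\int_X F''(u)\,\delta u\,\frac{\om^n}{n!}$) is precisely the constant that places the gradient (resp. the Hessian applied to $\delta u$) inside $\fham(X,\om_X)$, and that this normalization is \emph{forced by}, rather than auxiliary to, the identification $\fham(X,\om_X)^\ast=\fham(X,\om_X)\cdot\frac{\om^n}{n!}$ together with the vanishing of $\int_X\delta u\,\frac{\om^n}{n!}$.
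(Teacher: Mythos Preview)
Your proposal is correct. The paper in fact states this lemma without proof, treating the formulas as immediate computations; your direct first- and second-variation argument, together with the observation that the mean-zero constraint on $\delta u$ forces the subtraction of the average to obtain the representative in $\fham(X,\om_X)$, is exactly the intended justification.
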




\subsection{K\"ahler-Ricci soliton.}
In this subsection, following \cite{Donaldson2017} we will interpret K\"ahler-Ricci soliton equation as a {\red $\bD$-extremal point}
(in the sense of Section \ref{moment-convex}) in $\cJ_\Int(X,\om_X)$ with respect to the $\Ham(X,\om_X)$-action.  To achieve that, let us recall 
that $(L,h,\nabla)\to (X,\om_X=\Ric(\nabla))$ is a symplectic manifold  equipped with a prequantum line bundle $(L,h)$ with Hermtian metric $h$ with $\om=\Ric(h)$. Assume further that
$c_1(L)=c_1(X,\om_X)$, then for a fixed $J\in \cJ_\Int(X,\om_X)$ the space of 
$$\ker (d^\nabla\big|_J):=\{\al\in \Ga(\wedge^{n,0}_J(X)\otimes L)\mid d^\nabla \al=0\}\subset \Ga(\wedge^{n,0}_J(X)\otimes L)$$
is of $\dim=1$ and $\ker(d^\nabla)=\CC\cdot \al$.  In particular,
$N:=\ker(d^\nabla)\to \cJ_\Int(X,\om_X)$ is a line bundle over $\cJ_\Int(X,\om_X)$,
\begin{equation}
\xymatrix{                        
&N \ar@{^{(}->}[rr]^{}   \ar@{>}[d]^{\pi_\cJ}  && \pi_\cJ (K_\cX\otimes L)\ar@{>}[d]^{\pi}  &  \Ga(\wedge_J^{n,0}(X)\otimes L)\ar@{_{(}->}[l]^{} \ar@{>}[d]^{\pi} \\
  &            \cJ_\intg(X,\om_X)\ar@{^{(}->}[rr]^{}  && \cJ_\intg(X,\om_X) &  J \ar@{_{(}->}[l]^{}
            }
\text{ with }
\xymatrix{                        
\cX:=X\times\cJ_\Int(X,\om_X)  \ar@{>}[d]^{\pi_\cJ}  &  (X,J)  \ar@{_{(}->}[l]^{} \ar@{>}[d]^{\pi}. \\
 \cJ_\intg(X,\om_X) &  J \ar@{_{(}->}[l]^{}
            }
\end{equation}
For a local section  $\al\in \Ga(N)$ near $J\in \cJ_\Int(X,\om_X)$, Donaldson (cf. \cite[Theorem 1]{Donaldson2017}) defines the quadratic pairing:
$$
\lla\be,\be\rra_\cJ:=-\lla\be,\be\rra|_J+\frac{|\lla\al,\be\rra_J|^2}{\lla \al,\al\rra_J} \text{ with } \be:=\de\al \text{ and }
\displaystyle\lla \be,\be\rra|_J:=\int_X\be\wedge_h\bar \be \text{ for } \be\in \wedge_J^{n,0}(X)\otimes L
$$
which descends to a $\Ham(X,\om_X)$-invariant K\"ahler metric on $\cJ_\Int(X,\om_X)$, called {\red Berndtsson metric} by Donaldson.
Moreover, it was proved in \cite[Proposition 1] {Donaldson2017} that the moment map for $\Ham(X,\om_X)$-action on 
$(Z,\om)=\big(\cJ_\Int(X,\om_X), \lla\bullet,\bullet\rra_\cJ\big)$
is 
$$
\mu(J)=\mu([\al(J)]):=\frac{\al(J)\wedge_h\bar \al(J)}{\lla \al(J),\al(J)\rra}-\frac{\om_X^n}{V\cdot n!}\in\fham(X,\om_X)^\ast\  \text{ with } V=\int_X\frac{\om_X^n}{n!} \text{ and } \al\in N|_J\subset\Ga(\wedge^{n,0}_J(X)\otimes L).
$$
Let us introduce function $\phi=\phi_J\in C^\infty(X)$ via:
$$
\Om(J):=\frac{\al(J)\wedge_h\bar \al(J)}{\lla \al(J),\al(J)\rra}=\frac{e^{\phi_J}\cdot \om_X^n}{Vn!} \text{ with normalization } \frac{1}{V}\int_X(e^{\phi_J}-1)\frac{\om_X^n}{n!}=0
$$
then
\begin{equation}\label{mu-J}
\mu(J)=\frac{\al(J)\wedge_h\bar \al(J)}{\lla \al(J),\al(J)\rra}-\frac{\om_X^n}{V\cdot n!}= (e^{\phi_J}-1)\frac{\om_X^n}{Vn!}
\text{ and } \Ric(\Om(J))=\om_X
\end{equation}
where we regard $\Om(J)$ is a metric on the canonical line bundle $\wedge^{n,0}_J(X)^\ast\to X$. 
Let $\xi$ be the Hamiltonian Killing vector field and let
$$
d\hat\theta_\xi=\om_X(\xi,\cdot) \text{ satisfying }\int_X(e^{\hat\theta_\xi}-1)\frac{\om_X^n}{Vn!}=0, \ 
\footnote{ The normalization of $\theta_\xi$ is chosen so that there is no extra constant  in the  K\"ahler-Ricci soliton $\phi=\theta_\xi$.}
$$
then associated K\"ahler-Ricci soliton equation  with K\"ahler-Ricci soliton vector field $\xi$ is:
\begin{equation}\label{phi-th}
\frac{\al(J)\wedge_h\bar \al(J)}{\lla \al(J),\al(J)\rra}=\Om(J)=\frac{e^{\hat\theta_\xi}\cdot \om^n}{Vn!}  {\red \xLeftrightarrow{\hspace*{1.2cm}}}
\left\{
\begin{array}{rcl}
\phi_J&=&\hat\theta_\xi; \\
 & & \\
\displaystyle\int_X(e^{\phi_J}-1)\frac{\om_X^n}{Vn!}&=&\displaystyle\int_X(e^{\hat\theta_\xi}-1)\frac{\om_X^n}{Vn!}=0.
\end{array}
\right.
 \end{equation}
Following \eqref{dg}, the negative gradient flow for the functional $\bD_F:\fham(X,\om_X)^\ast=\wedge^{2n}(X)\to \RR$ is given  by
 \begin{equation}\label{KR-grad}
\dot\vphi-\dfrac{1}{V}\int_X\dot\vphi\frac{\om_X^n}{n!}=d\bD_F(J)=F'(\mu(J))-\dfrac{1}{V}\int_XF'(\mu(J))\dfrac{\om_X^n}{n!}=\phi_J-\dfrac{1}{V}\int_X\phi_J\frac{\om_X^n}{n!}\in \fham(X,\om_X)
 \end{equation}
 where $ \vphi\in \PSH(X,\om_X)$. On the other hand, the  K\"aher Ricci flow is given by
 \begin{eqnarray*}
 \ii\dd_J\dbar_J\dot\vphi&=&\dot \om=\ii\dd_J\dbar_J\log\frac{\Om(J)}{\om_X^n/n!}=\ii\dd_J\dbar_J  \log\left(\frac{\Om(J)}{\om_X^n/n!}-\frac{1}{V}+\frac{1}{V}\right)\cdot V\\
 &=&\ii \dd_J\dbar_J\log\Big(\stackbin[u:=]{}{\underbrace{\frac{V\mu(J)}{\om_X^n/n!}}}+1\Big) 
 \text{ by Definition \eqref{D-fcn}}.
 \end{eqnarray*}
comparing with \eqref{KR-grad}, we deduce that $F'(u)=\log(Vu+1)$ with $u=\dfrac{\mu(J)}{\om_X^n/n!}$, hence 
 $$F''(u)=\dfrac{1}{u+1/V} \text{ and  } F(u)=(u+1/V)\log(Vu+1)-u+{\rm const.}\ .$$

By applying Theorem \ref{Calabi} to $(Z,\om)=(\cJ_\Int(X,\om_X),\lla \bullet,\bullet\rra_\cJ)$ equipped with $\Ham(X,\om_X)$-invariant convex functional $\bD_F$ we recover \cite[Theorem A]{TianZhu2000} as follows
\begin{coro} Let us continue with the notation introduced as above and assume further that $(X,\om_X,J)$ is a Fano manifold admitting a K\"ahler-Ricci soliton with the {\red K\"ahler-Ricci soliton vector field}, equivalently,  the {\red $\bD$-extremal vector field},  $\xi\in\faut:= \faut(X, \om_X,J)$. Then
we have the Calabi decomposition:
$$
\faut=\faut_0\oplus\bigcup_{\lam >0}\faut_\lam \text{ with } \faut_\lam:=\left\{\eta^\CC\in \faut\left| [\xi,\eta^\CC]=\lam \eta^\CC\right.\right\}.
$$
\end{coro}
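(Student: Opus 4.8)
The plan is to realize a K\"ahler--Ricci soliton on $(X,\om_X,J)$ as a $\bD_F$-extremal point of the infinite-dimensional K\"ahler manifold $(Z,\om)=(\cJ_\Int(X,\om_X),\lla\bullet,\bullet\rra_\cJ)$ under the $\Ham(X,\om_X)$-action, and then to invoke Theorem \ref{Calabi} with $f=\bD_F$ and $z=J$. First I would verify that $\bD_F$ is strictly convex on the relevant domain: by Lemma \ref{bD-bD'} its Hessian is multiplication by $F''(u)=(u+1/V)^{-1}$ followed by projection onto the mean-zero functions, and since $u+1/V=e^{\phi_J}/V>0$ pointwise this is positive definite, so the hypotheses of Theorem \ref{Calabi} are met.

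Next I would check that a soliton is exactly a critical point of $\bD_F\circ\mu$. By Lemma \ref{bD-bD'}, $d\bD_F|_{\mu(J)}=F'(\mu(J))-\tfrac1V\int_X F'(\mu(J))\tfrac{\om_X^n}{n!}$, and with $F'(u)=\log(Vu+1)$ and $u=(e^{\phi_J}-1)/V$ from \eqref{mu-J} this reduces to $\phi_J-\tfrac1V\int_X\phi_J\tfrac{\om_X^n}{n!}$, as recorded in \eqref{KR-grad}. The soliton equation \eqref{phi-th} asserts $\phi_J=\hat\theta_\xi$, so $d\bD_F|_{\mu(J)}$ is, up to the normalizing constant, the Hamiltonian $\hat\theta_\xi$ of the soliton field $\xi$; hence $d\bD_F|_{\mu(J)}$ lies in the stabilizer $\fham(X,\om_X)_J$, which is precisely the statement that $J$ is a $\bD_F$-extremal point.

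It remains to translate the conclusion of Theorem \ref{Calabi} into the language of holomorphic vector fields. Here I would use the $\CC$-linear infinitesimal action $\si_J:\fham^\CC(X,\om_X)\to T_J^\CC\cJ_\Int$ to identify the complexified stabilizer $\fg_J=\ker\si_J$ with $\faut=\faut(X,\om_X,J)$: an element of $\fg_J$ is a complexified Hamiltonian whose associated field preserves $J$, i.e.\ a holomorphic vector field endowed with a complex holomorphy potential. Under this identification and the correspondence between the Poisson bracket of Hamiltonians and the Lie bracket of the associated fields, the operator $\ad_{d\bD_F|_{\mu(J)}}$ on $\fg_J$ becomes $\ad_\xi=[\xi,\bullet]$ on $\faut$. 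Theorem \ref{Calabi}(3)--(4) then yields $\fg_J=\bigoplus_{\lam\le 0}\fg_\lam$ with $\fg_0=\fham(X,\om_X)_J\otimes_\RR\CC$ the reductive part; passing through the factor $\ii$ that relates $\ii\,\ad_{d\bD_F|_{\mu(J)}}$ to $\ad_\xi$ converts the non-positive eigenvalues $\lam$ into the non-negative eigenvalues of $\ad_\xi$, giving $\faut=\faut_0\oplus\bigoplus_{\lam>0}\faut_\lam$ with $\faut_0=\fg_0$, which is the desired Calabi decomposition.

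The \emph{main obstacle} is the rigorous implementation of this finite/infinite-dimensional dictionary: justifying that the formal moment-map computation on the infinite-dimensional $\cJ_\Int$ may legitimately be fed into Theorem \ref{Calabi}, and in particular that $\fg_J=\ker\si_J$ really is the finite-dimensional algebra $\faut$, so that the eigenvalue decomposition is genuine rather than merely formal. A secondary, purely bookkeeping point is tracking the sign and the factor $\ii$ through the Poisson--Lie correspondence so that the eigenvalues land on the correct side; this is exactly where the switch from $\lam\le 0$ in Theorem \ref{Calabi} to $\lam>0$ in the statement originates.
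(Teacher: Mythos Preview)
Your proposal is correct and takes exactly the same route as the paper: the paper's entire proof is the single sentence ``By applying Theorem \ref{Calabi} to $(Z,\om)=(\cJ_\Int(X,\om_X),\lla\bullet,\bullet\rra_\cJ)$ equipped with $\Ham(X,\om_X)$-invariant convex functional $\bD_F$'', so you have in fact supplied considerably more detail than the paper does (the convexity check via $F''(u)>0$, the identification of $d\bD_F|_{\mu(J)}$ with $\hat\theta_\xi$ through \eqref{KR-grad} and \eqref{phi-th}, and the translation $\fg_J\cong\faut$). The two concerns you flag---the formal nature of the infinite-dimensional application of Theorem \ref{Calabi}, and the $\ii$/sign bookkeeping that converts $\lam\le 0$ into $\lam\ge 0$---are precisely the points the paper leaves implicit; it treats them as formal, and your honest acknowledgement of them is appropriate rather than a gap in your argument.
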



\subsection{Tian-Zhu's generalized Futaki-invariant }

Next we discuss how the generalized Futaki invariant firstly introduced in \cite{TianZhu2000} fits into our framework developed in Section \ref{moment-convex}. Let us assume $(X,\om_X,J)$ is a Fano manifold as before.
To apply Theorem \ref{Fut}  and Example \ref{trace} to the situation here, we need to establish the $\Ad_{\Ham^\CC(X,\om_X)}$-invariance of
\begin{equation}\label{bD-th}
\la d\stackbin[\in \fham^\ast]{}{\underbrace{\bD^{-1}(\theta_{\xi^\CC})}},\theta_{\eta^\CC}\ra \text{ for }\theta_{\xi^\CC},\theta_{\eta^\CC}\in \faut(X,\om_X, J)\subset \fham(X,\om_X) \text{ at }J\in \cJ(X,\om_X).
\end{equation}
For this, we have the following statement parallel to Corollary \ref{chevalley}:
\begin{prop}
Suppose $F\in C^\infty(\RR)$ is {\red convex} with $ \widehat F\in C^\infty(\im F',\RR)$ being the Legendre transform. Let $\bD$ be defined as in Lemma \ref{bD-bD'} such that  its Legendre transform satisfies(cf. Section \ref{Leg}):
$$
\widehat {\bD_F}(\phi)=\bD_{\widehat F}(\phi):
\begin{array}{ccccc}
C^\infty(X) & \xrightarrow{\hspace*{1.6cm}}  & \RR \\
\phi & \xmapsto{\hspace*{1.0cm}}  &\displaystyle \frac{1}{V} \int_X \widehat F (\phi)\frac{\om_X^n}{n!} &.
\end{array}
$$
Then 
\begin{enumerate}
\item The negative gradient flow equation is given by 
$$
\dot \phi=d\bD_{\widehat F}(\phi).
$$
\item

for $\theta_{\xi^\CC},\theta_{\eta^\CC}\in \faut(X,\om,J)$
$$
\la d\stackbin[\in \fham^\ast]{}{\underbrace{(\bD_F)^{-1}(\theta_{\xi^\CC})}},\theta_{\eta^\CC}\ra=\la d\stackbin[\in \fham^\ast]{}{\underbrace{\bD_{\widehat F}(\theta_{\xi^\CC})}},\theta_{\eta^\CC}\ra
$$
is constant along the $\Ham^\CC(X,\om)$-orbit of $J\in \cJ(X,\om_X)$. In particular, the $\mu$-invariant in this case is given by 
$\left\la d\bD_{\widehat F}(\hat\theta_\xi)- \mu(J),\theta_\eta\right\ra$.
\end{enumerate}
\end{prop}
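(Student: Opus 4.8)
The plan is to read this Proposition as the infinite-dimensional, $\Ham(X,\om_X)$-equivariant avatar of Corollary \ref{chevalley}, in which the Moser-flow computations of Lemma \ref{Ad-th} and Corollary \ref{Fut-F} replace the Chevalley/Harish-Chandra isomorphism used in the reductive case. The one structural input I assume beyond Lemma \ref{bD-bD'} is the Legendre identity $\widehat{\bD_F}=\bD_{\widehat F}$ granted by the hypothesis (and verified in Section \ref{Leg}); from it one gets, for a convex $\bD_F$, that $d\bD_F\colon\fham^\ast\to\fham$ is invertible with $(d\bD_F)^{-1}=d\widehat{\bD_F}=d\bD_{\widehat F}$, since the gradient of a Legendre transform inverts the original gradient.

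For part (1) I would start from the general negative gradient flow of $f\circ\mu$ established in the Corollary following Proposition \ref{dmu}, which in the present case is recorded as \eqref{KR-grad} in the primal form $\dot\vphi-\mathrm{avg}=d\bD_F(J)$. Running this through the duality $(d\bD_F)^{-1}=d\bD_{\widehat F}$ and the explicit derivative formulas of Lemma \ref{bD-bD'} rewrites the flow in the dual potential variable as $\dot\phi=d\bD_{\widehat F}(\phi)$; concretely I use \eqref{mu-J} to identify $\widehat F'(\phi)=(e^{\phi}-1)/V$ with the density of $\mu(J)$, so that $d\bD_{\widehat F}(\phi)$ is exactly the average-free moment-map density. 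This is a bookkeeping step once the Legendre relation is in hand.

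For part (2) the first equality in the displayed formula is immediate from $(d\bD_F)^{-1}=d\bD_{\widehat F}$. For constancy along $\Ham^\CC(X,\om)\cdot J$ I would split the pairing exactly as the invariant \eqref{chi} of Theorem \ref{Fut} is split, into a moment-map term $\la\mu(g\cdot J),\Ad_g\eta\ra$ and a Legendre term $\la d\bD_{\widehat F}(\Ad_g\xi),\Ad_g\eta\ra$. By Lemma \ref{bD-bD'} the Legendre term equals $\tfrac1V\int_X\widehat F'(\theta_{\xi^\CC,s})\,\theta_{\eta^\CC,s}\,\om^n/n!$, so its independence of $s$ is precisely Corollary \ref{Fut-F}(2) with $F$ replaced by $\widehat F$ (legitimate because $\widehat F\in C^\infty(\im F',\RR)$ meets the hypotheses there). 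The moment-map term is handled by Lemma \ref{holo-char}: along the orbit $\Ad_g\eta$ varies holomorphically and lies in $\faut(X,J_{g\cdot J})$, so $\la\mu(\cdot),\Ad_g\eta\ra$ is holomorphic on $\Ham^\CC\cdot J$ and real along the totally real $\Ham\cdot J$, forcing it to be constant. Combining the two terms yields the claimed $\mu$-invariant $\la d\bD_{\widehat F}(\hat\theta_\xi)-\mu(J),\theta_\eta\ra$, and \eqref{phi-th} together with \eqref{mu-J} show it vanishes exactly when $\phi_J=\hat\theta_\xi$, i.e. at a K\"ahler-Ricci soliton, confirming its role as an obstruction.

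The genuine obstacle is that $\Ham^\CC(X,\om_X)$ is \emph{not} a group complexifying $\Ham(X,\om_X)$, so the finite-dimensional Theorem \ref{Fut} cannot be quoted wholesale as in Corollary \ref{chevalley}; everything must instead be routed through the direct Moser-flow invariance of Corollary \ref{Fut-F} and the holomorphicity of Lemma \ref{holo-char}. The remaining care is regularity and normalization: one must check that $\widehat F$ is smooth on $\im F'$ so that $\widehat F'(\theta_{\xi^\CC,s})$ is admissible in Corollary \ref{Fut-F}, and keep the average-subtractions of Lemma \ref{bD-bD'} and the normalization \eqref{k} consistent so that the moment-map and Legendre terms pair as written.
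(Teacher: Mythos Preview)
Your core approach is correct and matches the paper's: identify $(d\bD_F)^{-1}=d\bD_{\widehat F}$ via the Legendre relation (Proposition~\ref{D-leg}), write the pairing explicitly as $\tfrac{1}{V}\int_X \widehat F'(\theta_{\xi^\CC})\,\theta_{\eta^\CC}\,\om_X^n/n!$, and then invoke Corollary~\ref{Fut-F}(2) (with $\widehat F$ in place of $F$) for its constancy along the $\Ham^\CC$-orbit. That is exactly what the paper does, in two lines.

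There is one misreading worth flagging. The displayed quantity in part~(2) is \emph{only} the Legendre term $\la d\bD_{\widehat F}(\theta_{\xi^\CC}),\theta_{\eta^\CC}\ra$, not the full $\mu$-invariant; there is nothing to split. Your decomposition into a moment-map piece $\la\mu(g\cdot J),\Ad_g\eta\ra$ and a Legendre piece, together with the appeal to Lemma~\ref{holo-char} for the former, is extra work the paper does not perform here. The ``In particular'' clause merely \emph{identifies} the $\mu$-invariant as the difference; the constancy of the moment-map contribution is taken from the general framework (Theorem~\ref{Fut} and its proof) rather than reproved in this proposition. So your argument is not wrong, just overelaborated relative to what is actually claimed. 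Likewise, part~(1) requires no separate argument beyond reading off the flow equation from \eqref{dg} in the present notation; the paper does not spell this out either.
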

\begin{proof}
By Proposition \ref{D-leg}, we have
\begin{eqnarray*}
\la d\stackbin[\in \fham^\ast]{}{\underbrace{\bD^{-1}(\theta_{\xi^\CC})}},\theta_{\eta^\CC}\ra
=\la d\stackbin[\in \fham^\ast]{}{\underbrace{\widehat\bD(\theta_{\xi^\CC})}},\theta_{\eta^\CC}\ra
&=&\frac{1}{V}\int_X\widehat F'(\theta_{\xi^\CC})\cdot\theta_{\eta^\CC}\frac{\om_X^n}{n!}.\\
\end{eqnarray*}
Then our claim follows from Corollary \ref{Fut-F} 
and the assumption that $\theta_{\xi^\CC},\theta_{\eta^\CC}\in \faut(X,\om_X,J)$.

\end{proof}
\begin{rema}
Note that $\widehat\bD$ is {\red not } a $\Diff(X)$-invariant functional.
\footnote{
For any $\phi\in \Diff(X)$ the functional
\begin{equation}
\begin{array}{ccccc}
\wedge^{2n}_X & \xrightarrow{\hspace*{1.6cm}}  & \RR \\
\phi^\ast (g\cdot \om^n/n!) & \xmapsto{\hspace*{1.0cm}}  &\displaystyle \int_X F\left(\phi^\ast g\cdot\Big(\frac{ \phi^\ast\om^n}{\om^n}\Big)\right)\cdot\frac{\om^n}{n!}&.
\end{array}
\end{equation}
one has
$$
\displaystyle \int_X F\left(\phi^\ast g\cdot\Big(\frac{ \phi^\ast\om^n}{\om^n}\Big)\right)\cdot\frac{\om^n}{n!}\ne \displaystyle \int_X F\left(\phi^\ast g\right)\cdot\frac{\phi^\ast\om^n}{n!}
$$
for general $\phi\in \Diff(X)$.
}
\end{rema}
To recover Tian-Zhu's generalized Futaki-invariant, let us fix some notation:
$$
\Ric(\om_X)-\om_X=\ddbar \phi \text{ with normalization } \frac{1}{V}\int_X (e^\phi-1)\frac{\om_X^n}{n!}=0.
$$ 
For any $\xi,\eta \in \faut(X,\om_X,J)$, we define normalizations:

\begin{equation}\label{th-ti-th}
d\theta_\eta(\bullet)=d\hat\theta_\eta(\bullet)=d\widetilde\theta_\eta(\bullet)=\om_X(\eta,\bullet) 
\text{ with normalization }
0=
\left\{
\begin{array}{lc}
 \displaystyle\int_X\theta_\eta \frac{\om^n_X}{n!} &\\
 \displaystyle\int_X\left(e^{\hat\theta_\eta}-1\right)\frac{\om_X^n}{n!}&\\
 \displaystyle\int_X\widetilde{\theta_\eta}\cdot e^\phi\frac{\om_X^n}{n!} & (\text{ cf. \cite[Section 2]{TianZhu2000}})\\
\end{array}
\right.
\end{equation}
\begin{prop}\label{tz-D}
The Tian-Zhu generalized invariant is given by 
$$
\frac{\int_X\widetilde\theta_\eta\cdot e^{\widetilde\theta_\xi}\cdot\om_X^n}{\int_X e^{\widetilde{\theta_{\xi}}}\cdot\om_X^n}
=
\left  \la d\widehat{\bD_F}(\hat\theta_{\xi})-\mu(J),\ \theta_\eta\right\ra
\text{ with }
\left\{ 
\begin{array}{ccll}
\mu(J)&:=&\displaystyle\frac{1}{V}(e^\phi-1)\frac{\om_X^n}{n!} & (\text{ cf. \ref{mu-J} })\\
 & & &\\
d\widehat{\bD_F}(u)&:=&\dfrac{1}{V}(e^u-1)\dfrac{\om_X^n}{n!} & (\text{ cf. Example \ref{F-check} })
\end{array}
\right.
$$
\end{prop}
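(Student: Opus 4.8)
The plan is to treat the claimed identity as a pointwise statement at the fixed complex structure $J$, reducing it to bookkeeping among the three normalizations of the Hamiltonian potentials recorded in \eqref{th-ti-th}. The invariance of both sides along the $\Ham^\CC(X,\om_X)$-orbit is already supplied by the preceding proposition (ultimately Corollary \ref{Fut-F} and Theorem \ref{Fut}), so the only thing left is to verify the equality of the two closed expressions, which is purely algebraic.

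First I would unwind the right-hand side. Substituting the explicit formulas recorded in the statement, namely $d\widehat{\bD_F}(\hat\theta_\xi)=\tfrac{1}{V}(e^{\hat\theta_\xi}-1)\tfrac{\om_X^n}{n!}$ (cf. Example \ref{F-check}) and $\mu(J)=\tfrac{1}{V}(e^{\phi}-1)\tfrac{\om_X^n}{n!}$ (cf. \eqref{mu-J}), into the natural pairing between $\fham(X,\om_X)^\ast=\wedge^{2n}_X$ and $\fham(X,\om_X)$, the two constant summands cancel and I obtain
$$
\left\la d\widehat{\bD_F}(\hat\theta_\xi)-\mu(J),\,\theta_\eta\right\ra
=\frac{1}{V}\int_X \theta_\eta\big(e^{\hat\theta_\xi}-e^{\phi}\big)\frac{\om_X^n}{n!}.
$$
The heart of the argument is then to match this against the Tian--Zhu ratio by passing between the normalizations of \eqref{th-ti-th}. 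Introducing the zero-average potential $\theta_\xi$ (so $\int_X\theta_\xi\,\om_X^n/n!=0$) and writing $\hat\theta_\xi=\theta_\xi+a$, $\widetilde\theta_\xi=\theta_\xi+b$, $\widetilde\theta_\eta=\theta_\eta+b_\eta$, the defining conditions give $e^{a}=V\big/\!\int_X e^{\theta_\xi}\,\om_X^n/n!$ and $b_\eta=-\tfrac1V\int_X\theta_\eta\,e^{\phi}\,\om_X^n/n!$. In computing the ratio, the additive constant $b$ of $\widetilde\theta_\xi$ appears as a common factor $e^{b}$ in numerator and denominator and cancels; the shift $b_\eta$ reproduces exactly the term $-\tfrac1V\int_X\theta_\eta e^{\phi}\,\om_X^n/n!$, i.e. the $-\mu(J)$ contribution; and the factor $e^{a}$ converts $\tfrac1V\int_X\theta_\eta e^{\hat\theta_\xi}\,\om_X^n/n!$ into $\int_X\theta_\eta e^{\theta_\xi}\big/\!\int_X e^{\theta_\xi}$. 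Assembling these turns the Tian--Zhu ratio into precisely the displayed expression, giving the asserted equality.

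Since every step is elementary once the normalizations are sorted, there is no analytic obstacle; the single point I would flag explicitly is that the density $\phi$ of $\mu(J)$ must be identified with the Ricci potential used to normalize $\widetilde\theta$ (both are determined by $\Ric(\om_X)-\om_X=\ddbar\phi$ with the same exponential normalization), so that $\int_X e^{\phi}\,\om_X^n/n!=V$. It is exactly this coincidence that lets the $e^{\phi}$-weighting built into Tian--Zhu's $\widetilde\theta_\eta$ be reinterpreted as the $-\mu(J)$ term, and it is the only place where the geometric input beyond bookkeeping enters.
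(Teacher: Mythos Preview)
Your proposal is correct and follows essentially the same route as the paper's own proof: both arguments reduce the identity to bookkeeping among the three normalizations in \eqref{th-ti-th}, identifying the shift constant of $\widetilde\theta_\eta$ with $-\la\mu(J),\theta_\eta\ra$ and using the exponential normalization of $\hat\theta_\xi$ to rewrite the weighted average $\int_X\theta_\eta e^{\theta_\xi}\big/\int_X e^{\theta_\xi}$ as $\tfrac1V\int_X\theta_\eta e^{\hat\theta_\xi}\,\om_X^n/n!=\la d\widehat{\bD_F}(\hat\theta_\xi),\theta_\eta\ra$. The only cosmetic difference is that the paper passes directly from $\widetilde\theta_\xi$ to $\hat\theta_\xi$ via a single constant $\hat C$, whereas you route through the mean-zero potential $\theta_\xi$ with separate constants $a,b,b_\eta$; the substance is identical.
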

\begin{proof}
By Definition, we have 
$
\widetilde{\theta_{\eta}}=\theta_{\eta}-C_\eta.
$
Using 
$$
\left.
\begin{array}{rcl}
0=\displaystyle\int_X\widetilde{\theta_\eta}\cdot e^\phi\frac{\om_X^n}{n!}&=&\displaystyle\int_X(\theta_\eta-C)\left(\mu(J)+\frac{\om_X^n}{Vn!}\right)\\
& & \\
0&=&\displaystyle\int_X\theta_\eta \frac{\om^n_X}{n!}
\end{array}
\right\}
{\red \xRightarrow{\hspace*{0.7cm}}}\ 
\left\{\!\!\!
\begin{array}{rcl}  
C_\eta\!\!\!
&=&\!\!\!\displaystyle\frac{C_\eta}{V}\int_X e^\phi\cdot\frac{\om_X^n}{n!}=\displaystyle\int_X\theta_\eta\cdot \mu(J)+\stackbin[\red =0]{}{\underbrace{\frac{1}{V}\int_X\theta_\eta\frac{\om_X^n}{n!}}}\\
&=&\!\!\!\displaystyle\int_X\theta_\eta\cdot \mu(J)=\la\mu,\theta_\eta\ra
\end{array}
\right.
$$
we obtain that $\widetilde{\theta_{\eta}}=\theta_{\eta}-\la\mu(J),\theta_\eta\ra$.
Let us introduce $\hat C$ via:
$$
\int_X e^{\widetilde{\theta_{\xi}}}\omega_X^n=\int_X e^{{\hat\theta_{\xi}}}e^{\hat C}\omega_X^n \text{ with }
\frac{1}{V}\int_X \left(e^{\hat\theta_\xi}-1\right)\frac{\om_X^n}{n!}=0
\text{ then } e^{\hat C}=\frac{\displaystyle\int_X e^{\widetilde\theta_\xi}\om_X^n}{\displaystyle\int_X e^{\hat\theta_\xi}\om_X^n}
=\frac{1}{V}\displaystyle\int_X e^{\widetilde\theta_\xi}\om_X^n.
$$
Then
\begin{eqnarray*}
\frac{1}{V} \int_X\widetilde{\theta_{\eta}} e^{ \widetilde{\theta_{\xi}}}\frac{\omega_X^n}{n!}
&=& \frac{1}{V} \int_X (\theta_{\eta}-\langle \mu(J), \theta_\eta\rangle)e^{\hat C} e^{ \hat\theta_{\xi}}\frac{\omega_X^n}{n!}\\
&=&\frac{e^{\hat C}}{V} \int_X (\theta_{\eta}-\langle \mu(J),\theta_\eta\rangle)\left(e^{ \hat\theta_{\xi}}-1+1\right)\frac{\omega_X^n}{n!}\\
\left(\because \frac{1}{V}\int_X \left(e^{\hat\theta_\xi}-1\right)\frac{\om_X^n}{n!}=0 \right)
&=&e^{\hat C} \left(\frac{1}{V} \int_X \theta_{\eta}\cdot \left(e^{\hat \theta_{\xi}}-1\right)\frac{\om_X^n}{n!}-
\la \mu(J),\theta_\eta\ra\right)\\
(\because \eqref{th-ti-th})
&=&e^{\hat C}\cdot\left( \la d\widehat{\bD_F}(\hat\theta_\xi),\theta_\eta\ra- \la \mu(J),\theta_\eta\ra\right)\\
&=&e^{\hat C}\cdot
\left\la d\widehat{\bD_F}(\hat\theta_\xi)- \mu(J),\theta_\eta\right\ra\\
\end{eqnarray*}
Notice that the normalizations of  $\displaystyle \frac{1}{V}\int_X \left(e^{\hat\theta_\xi}-1\right)\frac{\om_X^n}{n!}=0$ and $\displaystyle\int_X\theta_\xi\om^n_X=0$ imply
$$
d\widehat{\bD_F}(\hat\theta_\xi)=\frac{e^{\hat \theta_\xi}-1}{V}=\frac{e^{\theta_\xi}}{\displaystyle\frac{1}{V}\int_X e^{\theta_\xi}\frac{\om_X^n}{n!}}-\frac{1}{V}
$$

\end{proof}

%

\section{Appendix}
\subsection{Extending $\Ad_K$-invariant functions}\label{ext}
As we mentioned in Remark \ref{gen-Chevalley}, the $\Ad_G$-extension of $f\in C^\infty(\fk)^{\Ad_K}$  is non-unique and exists in quite general sense contrast to the canonical Chevalley isomorphism. 
\begin{prop}
Let $f\in C^\infty(\fk)^{\Ad_K}$ is a restriction of a function, which by abusing of notation still denoted by $f\in C^\infty(\fg)^{\Ad_G}$.

\end{prop}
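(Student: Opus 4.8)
The plan is to route the extension through the polynomial invariants, so that smoothness on all of $\fg$ comes for free. Fix a maximal torus $T\subset K$ with $\ft=\Lie(T)$, so that $\ft_\CC\subset\fg$ is a Cartan subalgebra and $W$ is the associated Weyl group. First I would restrict: set $\phi:=f|_\ft\in C^\infty(\ft)^W$. Since every $\Ad_K$-orbit in $\fk$ meets $\ft$ (maximal torus theorem for the compact group $K$) and $f$ is $\Ad_K$-invariant, $f$ is completely determined by $\phi$; thus it suffices to produce an $\Ad_G$-invariant smooth function on $\fg$ whose restriction to $\ft$ equals $\phi$.

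Next I would invoke Schwarz's theorem for the finite (hence compact) group $W$ acting linearly on the real vector space $\ft$: there exist polynomial generators $g_1,\dots,g_m$ of $\RR[\ft]^W$ and a smooth $R$ with $\phi=R(g_1,\dots,g_m)$. The point of passing to generators is that, by the Chevalley restriction theorem, $\CC[\fg]^G\xrightarrow{\ \sim\ }\CC[\ft_\CC]^W$; writing $p_1,\dots,p_r\in\CC[\fg]^G$ for homogeneous generators, the restrictions $p_j|_\ft$ generate $\CC[\ft]^W=\RR[\ft]^W\otimes_\RR\CC$, so their real and imaginary parts $\Re(p_j|_\ft),\im(p_j|_\ft)$ generate $\RR[\ft]^W$, and one may take the $g_i$ among these. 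Now observe that for each holomorphic invariant $p_j$ the functions $\Re p_j$ and $\im p_j$ are genuine $\Ad_G$-invariant smooth (non-holomorphic) functions on $\fg$, since $p_j\circ\Ad_g=p_j$ forces $\overline{p_j}\circ\Ad_g=\overline{p_j}$ for every $g\in G$. Hence I would simply set $\tilde f:=R\big(\dots,\Re p_j,\im p_j,\dots\big)$ on $\fg$, rewriting $R$ in terms of the $\Re p_j,\im p_j$ that actually occur. This $\tilde f$ is a composition of polynomial maps with the smooth function $R$, so it is smooth on all of $\fg$ and manifestly $\Ad_G$-invariant; and on $\ft$ it reduces to $R(g_1,\dots,g_m)=\phi$, whence $\tilde f|_\fk=f$ by $\Ad_K$-invariance and the maximal torus theorem.

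The hard part is precisely the global smoothness on $\fg$, and the design above is what makes it tractable. The naive route --- extend $\phi$ to a $W$-invariant smooth $\tilde\phi$ on $\ft_\CC$ (say $\tilde\phi(x+\ii y):=\phi(x)$) and then push it forward by sending $X\in\fg$ to its semisimple part modulo $W$ --- fails on two counts: the ``semisimple part'' map is not smooth across the non-regular and nilpotent strata, where $\Ad_G$-orbits are not closed and orbit-closure relations force rigidity; and not every $W$-invariant function on the real vector space $\ft_\CC$ is a smooth function of the holomorphic invariants $p_j$ alone (there are mixed $z,\bar z$ invariants). Both difficulties evaporate once the extension is built from the globally defined polynomials $p_j$ and $\overline{p_j}$ via Schwarz's theorem, because polynomials are smooth everywhere on $\fg$. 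The only points needing care are the verification that $\{\Re(p_j|_\ft),\im(p_j|_\ft)\}$ really generate $\RR[\ft]^W$ (a short comparison of complexifications, using the Chevalley isomorphism of Lemma \ref{R-C}) and the applicability of Schwarz's theorem to the $W$-action, both of which are standard; the non-uniqueness asserted in Remark \ref{gen-Chevalley} is then transparent, as $R$ is far from determined by $\phi$ off the real locus.
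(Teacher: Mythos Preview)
Your proof is correct and takes a genuinely different route from the paper's. The paper specializes to $K=\UU(n)$, $G=\GL(n,\CC)$ and works concretely through eigenvalues: it passes from $f$ to a symmetric function $\tilde f$ on $(i\RR)^n$ via diagonalization, extends this to an $S_n$-invariant $\tilde F$ on $\CC^n$ by $\tilde F(\lambda):=\tilde f(i\,\im\lambda_1,\dots,i\,\im\lambda_n)$, and then sets $F(\xi):=\tilde F(\lambda_1(\xi),\dots,\lambda_n(\xi))$ using the characteristic polynomial. This makes $\Ad_G$-invariance immediate (conjugate matrices share eigenvalues), but the paper does not verify smoothness of the final pullback, and that step is in fact delicate --- symmetric expressions such as $\sum_j(\im\lambda_j(\xi))^2$ are not smooth on $\fgl(n,\CC)$ (try $\xi=\bigl(\begin{smallmatrix}0&1\\t&0\end{smallmatrix}\bigr)$ for real $t$). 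Your route through Schwarz's theorem and the Chevalley isomorphism trades this concrete picture for the composition $R(\Re p_j,\im p_j)$ with globally defined $\Ad_G$-invariant polynomials, which secures $C^\infty$-regularity on all of $\fg$ for free and works uniformly for any reductive pair $(K,G)$. In effect, the ``naive route'' you criticize at the end is essentially the paper's own construction; your invocation of Schwarz is precisely what is needed to close that gap.
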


\begin{proof}
For simplicity, we will specialize to the case that $K=\UU(n)$, $G=\GL(n,\mathbb{C})$ and $f:\mathfrak{k}\rightarrow\mathbb{R}$ a smooth $\Ad_K$ invariant function. Then $f$ is the restriction of a smooth $\Ad_G^*$ invariant function $F:\mathfrak{g}^*\rightarrow \mathbb{R}$.


\vskip .1in

\begin{lemm}\label{conj} $A,B\in \mathfrak{k}$ and assume that $A = gBg^{-1}$ for some $g\in G$. Then $A=kBk^{-1}$ for some $k\in K$.
\end{lemm}

Let $f:\fk\rightarrow\mathbb{R}$ a smooth $\Ad_K$ invariant function. Define a smooth function $\ti f: (\ii\RR)^n\rightarrow\RR$
by the formula $\ti f(\lambda_1,...,\lambda_n)= f(\xi)$ where $\xi\in \fk$ is any element whose characteristic polynomial is $\prod_{j=1}^n (x-\lam_j)$.
 This is well defined since Lemma \ref{conj} implies that if $\xi,\xi'\in \fk$ have the same characteristic polynomial, then they are conjugate by an element in $K$ and $f$ is $\Ad_K$ invariant.
\vskip .1in

The function $\tilde f$ is smooth and invariant under the permutation group $S_n$. Let $\tilde F:\CC^n\rightarrow\RR$ be any smooth $\mathfrak S_n$-invariant extension of $f$, for example, $\tilde F(\lambda_1,...,\lambda_n)=
\tilde f(\ii{\rm Im}(\lam_1),...,\ii{\rm Im}(\lam_n))$. Let us define an extension of $f$, which by abusing of notation still denoted by 
$f(\xi):= \tilde F(\lam_1,...,\lam_n):\fg\rightarrow\RR$ where $\prod_{j=1}^n (x-\lam_j)$ is the characteristic polynomial of $\xi\in\fg$. Note $f$ is well defined since $\ti F$ is invariant under the symmetric group $\mathfrak S_n$. Moreover $f|_\fg$ is $\Ad_G$ invariant since all the elements in an $\Ad_G$ orbit have the same characteristic polynomial and it agrees with $f$ on $\fk$ by construction.
\end{proof}

\subsection{Idendification of $\PSH(X,\om_X)=\Ham(X,\om_X)^\CC/\Ham(X,\om_X)$}\label{D-94}
Let $\om_s:=\om_{\phi_s}=g_{\phi_s}^\ast \om=:g_s^\ast\om$, and we define map
\begin{equation}
\nu:
\begin{array}{ccccc}
\overline\PSH(X,\om_X,J) & \xrightarrow{\hspace*{1.6cm}}  &\cJ_\Int \\
(\phi, \om_{\phi}) & \xmapsto{\hspace*{1.0cm}}  & (g_\phi^{-1})^\ast J&.
\end{array}
\end{equation}
where
$$
\PSH(X,\om_X,J):=\left\{\phi\ \Big| \ \om_\phi:=\om+\ddbar\phi>0, \int_X\phi\om^n=0\right\}
$$
and
$$
\overline\PSH(X,\om_X,J):=\left\{(\phi,g_\phi)\in \PSH(X,\om_X)\times \Diff(X)\ |\ g_\phi^\ast\om=\om_\phi\right\}\subset\PSH(X,\om_X,J)\times \Diff(X).
$$
Then we have that $\im\nu=\Ham^\CC(X,\om_X)/\Ham(X,\om_X)$. To see this one notice that, if we let $\red f:=g^{-1}$
\begin{eqnarray*}
&&\frac{d}{ds}\left(\om\right)=\frac{d}{ds}\left((g_s^{-1})^\ast\om_s\right)
=f_s^\ast L_{\dot f_s}\om_s+\ii f_s^\ast\dd_J\dbar_J\dot\phi_s=f_s^\ast d \om_s(\dot f_s,\bullet)+f_s^\ast dd_J\dot\phi_s
\end{eqnarray*}
with
$$
d_J:= J\circ d\text{ hence  } \dd=\frac{d+\ii d_J}{2} \text{ and } \dbar=\frac{d-\ii d_J}{2}
$$

\begin{eqnarray*}
0=\frac{d}{ds}\left(f^\ast_{s}\om_s\right)
&=&f_s^\ast L_{\dot f_s}\om_s+\ii f_s^\ast\dd_J\dbar_J\dot\phi_s=f_s^\ast d \big(\om_s(\dot f_s,\bullet)\big)+f_s^\ast dd_J\dot\phi_s\\
&=&d\Big(f_s^\ast  \big(\om_s(\dot f_s,\bullet)\big)+f_s^\ast d_J\dot\phi_s\Big)
=d\Big(f_s^\ast \big( \om_s(\dot f_s,\bullet)\big)+f_s^\ast \big(J\circ d\dot\phi_s\big)\Big)
\end{eqnarray*}
by solving $f_s^\ast\big(\om_s(\dot f_s,\bullet)\big)+f_s^\ast \big(d_J\dot\phi_s\big)=0$.
For $v\in T_xX$, we have
\begin{eqnarray*}
\om_s(\dot{f_s}|_{f_s(x)}, (f_s)_* v)\big|_{f_s(x)}
&=&\omega_s\Big(f_{s\ast}\circ (f_s^{-1})_*( \dot{f_s} |_{f_s(x)}), (f_s)_*(v|_x)\Big)\Big |_{f_s(x)}\\
\red\left(\because f_s^\ast\om_s=\om \right)
&=&\om\Big((f_s^{-1})_* \big(\dot{f_s}|_{f_s(x)}\big),(v|_x)\Big)\Big|_x
\end{eqnarray*}
and
\begin{eqnarray*}
f_s^*\big(J\circ (d\dot{\phi_s})|_{f_s(x)}\big)(v|_x)
&=&d\dot{\phi_s}|_{f_s(x)}\Big(J|_{f_s(x)} \big(f_{s\ast}v|_x\big)\Big)\\
&=&d\dot{\phi_s}|_{f_s(x)}\Bigg( f_{s\ast}\Big(\big(\stackbin[\red =J^{f_s}=J_s|_x]{}{\underbrace{ (f_s^{-1})_*J|_{f_s(x)} f_{s\ast}}}\big)v|_x\Big)\Bigg)\\
&=&d\dot{\phi_s}|_{f_s(x)}\Big ((f_{s\ast}\big( J_s|_x\cdot v|_x\big)\Big)\\
&=&\big( J_s\circ d (f_s^\ast\dot{\phi_s})\big)(v|_x)
\end{eqnarray*}
hence
$$
\om\Big((f_s^{-1})_* \big(\dot{f_s}|_{f_s(x)}\big),(\bullet |_x)\Big)\Big|_x=\big( J_s\circ d (f_s^\ast\dot{\phi_s})\big)(\bullet |_x)
$$
from which we obtain
$$
\fdiff(X)\ni \stackbin[\red g_{s\ast}\Big(\dot f_s\big |_{g_s^{-1}(x)}\Big)]{}{\underbrace{(f_s^{-1})_* \big(\dot{f}_s|_{f_s(x)}\big)\big|_x}}
=\big( J_s\circ d (f_s^\ast\dot{\phi_s})\big)\big|_x\in J_s\circ\fham(X,\om_X).
$$


\subsection{Legendre Transform}\label{Leg}

%

Let $\red F: \RR\supset U\rightarrow \RR$ be a convex function and $\bD_F: C^{\infty}(X,U)\rightarrow \RR$ be a functional defined by
\[\bD_F(\varphi):=\int_X F(\varphi(x)) d\vol_X\] 
for a fixed volume form $d\vol_X$. 

\begin{prop}\label{D-leg}
Let $P:=\{t\in \RR| t=f'(x) \text{ for some }x\in X\}=\im (dF)$
for any $\psi\in C^{\infty}(X, P)$. 
Then 
the Legendre transform $\widehat{\bD_F}: C^\infty(X,P)\to \RR$ is given by 

\[\widehat{\bD_F}(\psi)=\int_X \widehat{F}(\psi(x)) d\vol_X,\] 
with $\widehat{F}$ being the Legendre transform of $f$ corresponding to the pairing
\[\langle \varphi,\psi\rangle:=\int_X \varphi(x)\psi(x) d\vol_X.\]
\end{prop}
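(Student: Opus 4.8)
The plan is to compute the Legendre transform straight from its definition and reduce the optimization over functions to a pointwise optimization on $\RR$, an integral-functional version of the elementary fact that conjugation commutes with separable sums. With respect to the pairing $\langle\varphi,\psi\rangle=\int_X\varphi\psi\,d\vol_X$, the definition of the Legendre transform (as in the earlier definition of $\widehat f$) reads
$$
\widehat{\bD_F}(\psi)=\sup_{\varphi\in C^\infty(X,U)}\Big(\langle\varphi,\psi\rangle-\bD_F(\varphi)\Big)=\sup_{\varphi\in C^\infty(X,U)}\int_X\Big(\varphi(x)\psi(x)-F(\varphi(x))\Big)d\vol_X,
$$
while the pointwise conjugate is $\widehat F(s)=\sup_{t\in U}\big(ts-F(t)\big)$. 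The entire content of the proposition is that one may interchange the supremum over $\varphi$ with the integral over $X$, after which the right-hand side becomes $\int_X\widehat F(\psi)\,d\vol_X=\bD_{\widehat F}(\psi)$.

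First I would establish the easy inequality $\widehat{\bD_F}(\psi)\le\int_X\widehat F(\psi)\,d\vol_X$. For any admissible $\varphi$ and every point $x\in X$, the definition of $\widehat F$ gives the pointwise bound $\varphi(x)\psi(x)-F(\varphi(x))\le\widehat F(\psi(x))$; integrating against $d\vol_X$ and then taking the supremum over $\varphi$ yields the claim.

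For the reverse inequality I would exhibit an explicit maximizer. Since $F$ is convex, the concave function $t\mapsto ts-F(t)$ attains its supremum where $s=F'(t)$, so for $s\in P=\im(F')$ the value $\widehat F(s)$ is attained at $t_\ast=(F')^{-1}(s)$. Given $\psi\in C^\infty(X,P)$ I would set $\varphi_\ast:=(F')^{-1}\circ\psi$, which, because $F'$ is a diffeomorphism of $U$ onto $P$ with smooth inverse, is a genuine smooth $U$-valued test function. By construction $\varphi_\ast$ saturates the pointwise bound everywhere, $\varphi_\ast(x)\psi(x)-F(\varphi_\ast(x))=\widehat F(\psi(x))$, so inserting $\varphi_\ast$ into the supremum gives $\widehat{\bD_F}(\psi)\ge\int_X\widehat F(\psi)\,d\vol_X$. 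Combining the two inequalities produces the asserted identity $\widehat{\bD_F}(\psi)=\bD_{\widehat F}(\psi)$.

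The main obstacle is precisely the admissibility of the pointwise maximizer: the interchange of $\sup$ and $\int$ is legitimate only because $\varphi_\ast$ can be realized as an honest smooth section, which relies on $F'$ being a bijection onto $P$. Under strict convexity (that is $F''>0$, consistent with the normalization $P=\im F'$) this is immediate via the inverse function theorem; if one only assumes convexity in the weak sense, the pointwise supremum need not be attained by a single smooth selection, and I would instead produce an approximating sequence $\varphi_j\in C^\infty(X,U)$ whose values converge to a subdifferential selection of $\widehat F$ at $\psi(x)$ and pass to the limit under the integral by dominated convergence. This is the standard interchange theorem for integral functionals of normal integrands, which I would invoke only as a fallback, the clean strictly convex case already covering all the applications ($\bD_F$ with $F''>0$) needed in this paper.
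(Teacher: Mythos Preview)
Your proof is correct and follows essentially the same route as the paper: both establish the upper bound by the pointwise inequality $\varphi(x)\psi(x)-F(\varphi(x))\le\widehat F(\psi(x))$ and then saturate it with the explicit maximizer $\varphi_\ast=(F')^{-1}\circ\psi$, whose smoothness follows from the invertibility of $F'$. Your extra paragraph on the merely-convex fallback via approximating sequences goes beyond what the paper needs (and what it proves), but it does no harm.
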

\begin{proof}
Fix $\psi$, then we have
\begin{align*}
\widehat{\bD_F}(\psi):=&\sup_{\varphi \in C^\infty(X,\RR)}\{\langle \varphi,\psi\rangle- \bD_F(\varphi)\}\\
=& \sup_{\varphi \in C^\infty(X,U)}\left(\int_X \varphi(x)\psi(x)d\vol_X-\int_X F(\varphi(x))d\vol_X\right)\\
=& \sup_{\varphi \in C^\infty(X,U)}\int_X \big(\varphi(x)\psi(x)-F(\varphi(x))\big)d\vol_X\\
\leq& \int_X \left(\sup_{u\in U} (u\cdot \psi(x)-F(u))\right) d\vol_X\\
=&\int_X \widehat{F}(\psi(x))d\vol_X.
\end{align*}

We also need to show that there is a $\varphi\in C^\infty(X,U)$ satisfying:
\[\int_X (\varphi(x)\psi(x)-F(\varphi(x)))d\vol_X\geq \int_X \widehat{F}(\psi(x))d\vol_X.\] 
For that, by our assumption $\psi\in C^\infty(X,P)$ for any $x\in \RR$ there is a {\red unique} $p_x$ satisfying $\psi(x)=F'(p_x).$   We define $\varphi_{\psi}(x)=p_x$, we only need to show that $\varphi_{\psi}(x)$ is continuous. However, $F$ is convex and smooth implies $(dF)^{-1}$ exists and smooth. Hence $\varphi_{\psi}(x)= (dF)^{-1}(\psi(x))$ which is  smooth, and for any $x$, $\widehat{F}(\psi(x))=\varphi(x)\psi(x)-F(\varphi(x))$, which implies the inequality is equality.
\end{proof}

\begin{exam}\label{F-check}
$F(u)=(u+1/V)\log (u+1/V)-(u+1/V)+1$ then $\widehat F(p)=e^p-p$. Then the functions 
$$
\begin{array}{rclcrclccl}
F'(u) &=&\log(u+1/V) &\text{ with }& \im(F')&=&(-\infty,+\infty),\ &  \Dom(F')&=&(-1/V,+\infty)\\
 \widehat F'(p)&=&e^p-1/V &\text{ with }& \im(\widehat F')&=& (-1/V,+\infty),\ &  \Dom(\widehat F')&=&(-\infty, +\infty)
\end{array}
 $$ 
are inverse to each other (cf. \eqref{exp-x}).  By Proposition \ref{D-leg}, one has
$$
\widehat{\bD_F}=\bD_{\widehat F}
$$
as we know  from \eqref{mu-J} that $u=\dfrac{\mu(J)}{\om^n/n!}-\dfrac{1}{V}>-\dfrac{1}{V}$, from which we deduce that
the corresponding $\mu$-invariant in Theorem \ref{Fut} is given by:
\begin{eqnarray*}
\la (d\bD_F)^{-1}(\theta_{\xi}),\theta_{\eta}\ra
=\la d\bD_{\widehat F}(\theta_{\xi}),\theta_{\eta}\ra
&=&\int_X\widehat F'(\theta_{\xi})\cdot\theta_{\eta}\frac{\om_X^n}{n!}=
\int_X\frac{e^{\hat \theta_\xi}-1}{V}\cdot\theta_{\eta}\frac{\om_X^n}{n!}\\
&=&\frac{1}{V}\int_X \left(\frac{e^{\theta_{\xi}}}{\displaystyle\frac{1}{V}\int_Xe^{\theta_{\xi}}\frac{\om^n_X}{n!}}-\frac{1}{V}\right)\theta_{\eta}\frac{\om_X^n}{n!}.
\end{eqnarray*}

\end{exam}


\begin{bibdiv}
\begin{biblist}


\bib{Calabi1982}{article}{
   author={Calabi, Eugenio},
   title={Extremal K\"{a}hler metrics},
   conference={
      title={Seminar on Differential Geometry},
   },
   book={
      series={Ann. of Math. Stud.},
      volume={102},
      publisher={Princeton Univ. Press, Princeton, N.J.},
   },
   date={1982},
   pages={259--290},
   review={\MR{645743}},
}

\bib{Calabi1985}{article}{
   author={Calabi, Eugenio},
   title={Extremal K\"{a}hler metrics. II},
   conference={
      title={Differential geometry and complex analysis},
   },
   book={
      publisher={Springer, Berlin},
   },
   date={1985},
   pages={95--114},
   review={\MR{780039}},
}

\bib{Donaldson1997}{article}{
   author={Donaldson, S. K.},
   title={Remarks on gauge theory, complex geometry and $4$-manifold
   topology},
   conference={
      title={Fields Medallists' lectures},
   },
   book={
      series={World Sci. Ser. 20th Century Math.},
      volume={5},
      publisher={World Sci. Publ., River Edge, NJ},
   },
   date={1997},
   pages={384--403},
   review={\MR{1622931}},
}

\bib{Donaldson2017}{article}{
   author={Donaldson, S. K.},
   title={The Ding functional, Berndtsson convexity and moment maps},
   conference={
      title={Geometry, analysis and probability},
   },
   book={
      series={Progr. Math.},
      volume={310},
      publisher={Birkh\"{a}user/Springer, Cham},
   },
   date={2017},
   pages={57--67},
   review={\MR{3821922}},
}

\bib{Fujiki1990}{article}{
   author={Fujiki, Akira},
   title={The moduli spaces and K\"{a}hler metrics of polarized algebraic
   varieties},
   language={Japanese},
   journal={S\={u}gaku},
   volume={42},
   date={1990},
   number={3},
   pages={231--243},
   issn={0039-470X},
   review={\MR{1073369}},
}

\bib{FutakiMabuchi1995}{article}{
   author={Futaki, Akito},
   author={Mabuchi, Toshiki},
   title={Bilinear forms and extremal K\"{a}hler vector fields associated with
   K\"{a}hler classes},
   journal={Math. Ann.},
   volume={301},
   date={1995},
   number={2},
   pages={199--210},
   issn={0025-5831},
   review={\MR{1314584}},
   doi={10.1007/BF01446626},
}

\bib{Nakamura2018}{article}{
   author={Nakamura, Satoshi},
   title={Hessian of the Ricci Calabi functional},
   journal={arXiv:1801.02431},
}

\bib{Nakamura2019}{article}{
   author={Nakamura, Satoshi},
   title={H-functional and Matsushima type decomposition theorem},
   journal={arXiv:1905.05326},
}

\bib{Tian2000}{book}{
   author={Tian, Gang},
   title={Canonical metrics in K\"{a}hler geometry},
   series={Lectures in Mathematics ETH Z\"{u}rich},
   note={Notes taken by Meike Akveld},
   publisher={Birkh\"{a}user Verlag, Basel},
   date={2000},
   pages={vi+101},
   isbn={3-7643-6194-8},
   review={\MR{1787650}},
   doi={10.1007/978-3-0348-8389-4},
}

\bib{TianZhu2002}{article}{
   author={Tian, Gang},
   author={Zhu, Xiaohua},
   title={A new holomorphic invariant and uniqueness of K\"{a}hler-Ricci
   solitons},
   journal={Comment. Math. Helv.},
   volume={77},
   date={2002},
   number={2},
   pages={297--325},
   issn={0010-2571},
   review={\MR{1915043}},
   doi={10.1007/s00014-002-8341-3},
}

\bib{TianZhu2000}{article}{
   author={Tian, Gang},
   author={Zhu, Xiaohua},
   title={Uniqueness of K\"{a}hler-Ricci solitons},
   journal={Acta Math.},
   volume={184},
   date={2000},
   number={2},
   pages={271--305},
   issn={0001-5962},
   review={\MR{1768112}},
   doi={10.1007/BF02392630},
}


\bib{Wang2004}{article}{
   author={Wang, Xiaowei},
   title={Moment map, Futaki invariant and stability of projective
   manifolds},
   journal={Comm. Anal. Geom.},
   volume={12},
   date={2004},
   number={5},
   pages={1009--1037},
   issn={1019-8385},
   review={\MR{2103309}},
}

\end{biblist}
\end{bibdiv}
\end{document}